\definecolor{my-blue}{rgb}{0.0,0.0,0.6}
\definecolor{my-red}{rgb}{0.9,0.0,0.0}
\definecolor{my-green}{rgb}{0.0,0.5,0.0}
\newtheorem{thm}{Theorem}[section]
\newtheorem{prop}[thm]{Proposition}
\newtheorem{cor}[thm]{Corollary}
\newtheorem{lem}[thm]{Lemma}
\newtheorem{theorem}{Theorem}[section]
\newtheorem{proposition}[theorem]{Proposition}
\newtheorem{corollary}[theorem]{Corollary}
\numberwithin{equation}{section}
\newcommand{\be}{\begin{equation}}  \newcommand{\ee}{\end{equation}}
\newcommand{\nn}{\nonumber}
\newcommand{\fl}[1]{\lfloor{#1}\rfloor}
\newcommand{\ce}[1]{\lceil{#1}\rceil}
\def\wt{\widetilde}   
\begin{document}

\def\C{{\mathbb C}}
\def\R{{\mathbb R}}
\def\F{{\mathcal F}}
\def\E{{\mathcal E}}

\def\Z{{\mathbb Z}}
\def\l{\lambda}
\def\tr{\triangle}
\def\varen{\varepsilon}
\def\did{\diamond}
\def\lot{\text{l.o.t.}}

\title[]{Geometric RSK correspondence, Whittaker functions and symmetrized random polymers}

\author{Neil O'Connell}
\address{Mathematics Institute, University of Warwick, Coventry CV4 7AL, UK}
\curraddr{}
\email{n.m.o-connell@warwick.ac.uk}
\thanks{}
\author{Timo Sepp\"al\"ainen}
\address{Department of Mathematics, University of Wisconsin-Madison, Madison, WI 53706-1388, USA}
\curraddr{}
\email{seppalai@math.wisc.edu}
\thanks{}
\author{Nikos Zygouras}
\address{Department of Statistics, University of Warwick, Coventry CV4 7AL, UK}
\curraddr{}
\email{n.zygouras@warwick.ac.uk}
\thanks{}

%\subjclass[2010]{Primary}

\keywords{}

\date{}

\dedicatory{}

\begin{abstract}
We show that the geometric lifting of the RSK correspondence introduced by A.N. Kirillov (2001)
is volume preserving with respect to a natural product measure on its domain, and that the integrand in 
Givental's integral formula for $GL(n,\R)$-Whittaker functions arises naturally in this context. 
Apart from providing further evidence that Whittaker functions are the natural analogue of Schur 
polynomials in this setting, our results also provide a new `combinatorial' framework for the study of random polymers.
When the input matrix consists of random inverse gamma distributed weights, the probability 
distribution of a polymer partition function constructed from these weights can be written down 
explicitly in terms of Whittaker functions.   
Next we restrict the geometric RSK mapping to symmetric matrices and show that the volume 
preserving property continues to hold. We determine the probability law of the polymer partition 
function with inverse gamma weights that are constrained to be symmetric about the main diagonal, 
with an additional factor on the main diagonal.  
The third combinatorial mapping studied is a variant of the geometric RSK mapping for triangular arrays, which is 
again showed to be volume preserving.  This leads to a formula for the probability distribution of a polymer model 
whose paths are constrained to stay below the diagonal.  We also show that the analogues of the Cauchy-Littlewood 
identity in the setting of this paper are equivalent to a collection of Whittaker integral identities conjectured by Bump 
(1989) and  Bump and Friedberg (1990) and proved by Stade (2001, 2002).  Our approach leads to new `combinatorial' 
proofs and generalizations of these identities, with some restrictions on the parameters.  
\end{abstract}

\maketitle

\tableofcontents

\section{Introduction}

The Robinson-Schensted-Knuth (RSK) correspondence is a combinatorial mapping which plays an
important role in the theory of Young tableaux, symmetric functions and representation theory~\cite{fulton,stanley}.  
It is deeply connected with Schur functions and provides a combinatorial framework for understanding the Cauchy-Littlewood identity and Schur measures on integer partitions.  It is also the basic structure which lies behind the solvability of a particular family of combinatorial models in probability and statistical physics including longest increasing subsequence problems, directed last passage percolation in 1+1 dimensions and the totally asymmetric simple exclusion process, see for example~\cite{AD,BDJ,KJ,OkInfWedge}.

The RSK map is defined on matrices with non-negative integer coefficients and
can be described by expressions in the max-plus semi-ring.  This was extended to matrices with
real entries by Berenstein and Kirillov~\cite{bki}.  Replacing these expressions by their analogues in the usual algebra, 
A.N. Kirillov~\cite{ki} introduced a geometric lifting of the Berenstein-Kirillov correspondence which he called 
the `tropical RSK correspondence', in honour of M.-P. Sch\"utzenberger (1920--1996).
However, for many readers nowadays the word `tropical' indicates just the opposite, 
so to avoid confusion we will refer to Kirillov's construction as the {\em geometric} RSK (gRSK) correspondence, 
as in the theory of {\em geometric crystals}~\cite{bk,bk2}, which is closely related.

The geometric RSK correspondence is a birational mapping from $(\R_{>0})^{n\times m}$ onto itself. 
It was introduced by Kirillov~\cite{ki} for square matrices ($n=m$) and generalized to the rectangular 
setting by Noumi and Yamada~\cite{ny}.   In the paper~\cite{cosz} it was shown 
that there is a fundamental connection between the gRSK correspondence and $GL(n,\R)$-Whittaker 
functions, analogous to the well-known connection between the RSK correspondence and Schur
functions.  In particular, it is explained there that the analogue of the Cauchy-Littlewood identity in the
setting of gRSK can be seen as a generalization of a Whittaker integral identity which was
originally conjectured by Bump~\cite{bump} and later proved by Stade~\cite{stade}.  The connection
to Whittaker functions gives rise to a natural family of measures (Whittaker measures) which play
a similar role in this setting to Schur measures on integer partitions.  It also has applications 
to random polymers.  In the paper~\cite{cosz}, an
explicit integral formula is obtained for the Laplace transform of the law of the partition function 
associated with a random directed polymer model on the two-dimensional lattice with log-gamma 
weights introduced in~\cite{s}.  For related recent developments, see~\cite{noc,bc,bcr}.

In the present work, we first provide further insight into the results of~\cite{cosz} by showing:

(a) the gRSK mapping is volume preserving with respect to the product measure 
$\prod_{ij} dx_{ij}/x_{ij}$ on $(\R_{>0})^{n\times m}$, and 

(b) the integrand in
Givental's integral formula for $GL(n,\R)$-Whittaker functions~\cite{g,jk} arises naturally through the 
application of the gRSK map (see Theorem~\ref{bi} below).  

The volume preserving property can be seen as a consequence of a new description of the gRSK map
as a composition of {\em local moves} which we introduce in this paper.  This description is 
a re-formulation of the geometric row-insertion algorithm introduced by Noumi and Yamada in~\cite{ny}.  
Combining (a) and (b) gives a direct `combinatorial' proof of Stade's identity (with some restrictions on the parameters)
analogous to the bijective proof of the Cauchy-Littlewood identity via the classical RSK correspondence
(see, for example, Fulton~\cite[\S 4.3]{fulton}).

The second aim of this paper is to initiate a program of understanding the gRSK
mapping in the presence of symmetry constraints in much the same spirit as the work of Baik
and Rains~\cite{br,br1,b} on longest increasing subsequence and last passage percolation
problems.  Here we consider one
particular symmetry, namely the restriction of gRSK to symmetric matrices.  We show
that the volume preserving property continues to hold in this setting and deduce the analogue
of the Whittaker measure.  The corresponding Whittaker integral identity (Corollary \ref{nwid})
involves only a single Whittaker function, and turns out to be 
equivalent to a formula for a certain Mellin transform of the 
$GL(n,\R)$-Whittaker function which was conjectured by Bump and Friedberg~\cite{bf} and proved by 
Stade~\cite{stade-jams}, again with some restrictions on the parameters.  
We also consider a degeneration of this model in which the diagonal entries of the input matrix
vanish and the gRSK map rescales to a new version of gRSK defined on triangles.  This
model has a surprising and non-trivial connection to the symmetric case.

As an application we determine the law of the partition function of a family of random polymer models 
with log-gamma weights that are constrained to be symmetric about the main diagonal.
We also consider a degeneration of this model in which the polymer paths are constrained to stay 
below the diagonal.  This  can be seen as a discrete version of the continuum random polymer above a hard wall,
which appeared recently in the physics literature \cite{gld}.  Formally, our results yield integral formulae 
for the Laplace transforms of these laws which can be used as a starting point for further asymptotic development.
Similar integral formulae obtained in \cite{cosz} for the polymer model without symmetry were used in~\cite{bcr} 
to prove Tracy-Widom GUE asymptotics for the law of the partition function.  The polymer models we
consider also give rise to a positive temperature version of the interpolating ensembles of Baik and Rains~\cite{br,b}.
In the KPZ scaling limit they should correspond to the KPZ equation on the half-line with mixed boundary
conditions at zero and narrow wedge initial condition.

The outline of the paper is as follows.  

$\bullet$ In the next section we give some background on Whittaker functions,
introduce a generalization of these functions and explain how these functions can be regarded as
generating functions for {\em patterns}.  This interpretation can be seen as a generalization of Givental's 
integral formula~\cite{g,jk,gklo} and is analogous to the combinatorial interpretation of Schur functions as 
generating functions for semistandard Young tableaux.  

$\bullet$ In Section~\ref{gRSKsec} we give a new description of the gRSK map as a composition of local moves
(based on Noumi and Yamada's dynamical description of gRSK) and use this to establish several
basic results.  In particular, we show that the gRSK mapping is volume-preserving with respect
to a natural product measure on $(\R_{>0})^{n\times m}$ and establish a fundamental
identity (Theorem~\ref{bi}) which provides an elementary explanation of the appearance of Whittaker 
functions in this setting.  This gives further insight into earlier results from~\cite{cosz} and yields a new 
proof and generalization of two of Stade's Whittaker integral identities (Theorems \ref{st1} and \ref{st2}).  

$\bullet$ 
In Section~\ref{nvo} we explain the relationship between the local-moves description of gRSK and the geometric
row-insertion algorithm of Noumi and Yamada~\cite{ny}.  

$\bullet$ 
In Section~\ref{sym-sec} we consider the restriction of gRSK
to symmetric matrices.  We show that the volume preserving property continues to hold in this setting 
and deduce several consequences, including a new proof (with some restriction on the parameters)
of the Whittaker integral identity (Theorem \ref{st3}) involving a single Whittaker function due to 
Stade~\cite{stade-jams}.  

$\bullet$ 
In Section \ref{sec:wall} we introduce gRSK  for triangular arrays.  Again we prove a
fundamental identity and the volume preserving property, 
and deduce the probability distribution of the shape vector of the output
array under inverse gamma distributed initial weights.   
The polymer version of the problem describes paths restricted to lie below a hard wall.  

$\bullet$ 
In Section~\ref{wid} we explain how the results of this paper relate to some of the Whittaker integral 
identities which have appeared previously in the automorphic forms literature.  

$\bullet$ 
In Section \ref{sec:trop} we explain how the Berenstein-Kirillov extension of the RSK correspondence
can be recovered by taking a limit (tropicalization).  In statistical physics terminology this is a zero-temperature
limit that takes polymer partition functions to last-passage percolation values.    
By analogy with Section \ref{gRSKsec}, we give a description of the Berenstein-Kirillov mapping
in terms of local moves which shows that this map is also volume preserving.  Under exponentially 
distributed weights the probability distribution of the shape vector of the resulting pair of Gelfand-Tsetlin 
patterns is given by a non-central Laguerre ensemble.  This connection to random matrix theory
has had important applications to last passage percolation models~\cite{KJ,br,fr,bp,dw}.

\bigskip

\noindent {\bf Acknowledgements.}  Many thanks to Jinho Baik, Ivan Corwin, Anatol Kirillov,
Eric Rains and Eric Stade for helpful discussions.
NO'C is partially supported by EPSRC grant EP/I014829/1. TS is partially supported by National Science 
Foundation grants DMS-1003651 and DMS-1306777 and by the Wisconsin Alumni Research Foundation. NZ is supported by a Marie Curie International Reintegration Grant within the 7th European Community Framework Programme, IRG-246809.

\section{Whittaker functions and patterns} \label{wp}

We begin by defining the following Baxter $Q$-type operators, as in~\cite{gklo,glo}.
For $\lambda\in\C$, $x,y\in(\R_{>0})^n$, define
$$Q^{n}_\lambda(x,y)=\left(\prod_{i=1}^n\frac{y_i}{x_i}\right)^\lambda
\exp\left( -\sum_{i=1}^n\frac{y_i}{x_i} -\sum_{i=1}^{n-1}\frac{x_{i+1}}{y_i} \right) .$$
For $\lambda\in\C$, $x\in(\R_{>0})^n$ and $y\in(\R_{>0})^{n-1}$, define
$$Q^{n,n-1}_\lambda(x,y)=\left(\frac{\prod_{i=1}^{n-1} y_i}{\prod_{i=1}^n x_i}\right)^\lambda
\exp\left( -\sum_{i=1}^{n-1} \frac{y_i}{x_i} -\sum_{i=1}^{n-1}\frac{x_{i+1}}{y_i} \right) .$$
We regard these as integral operators: for suitable test functions,
$$Q^n_\lambda f (x) = \int_{(\R_{>0})^n} Q^{n}_\lambda(x,y) f(y) \prod_{i=1}^n \frac{dy_i}{y_i},$$
$$Q^{n,n-1}_\lambda f (x) = \int_{(\R_{>0})^{n-1}} Q^{n,n-1}_\lambda(x,y) f(y) \prod_{i=1}^{n-1} \frac{dy_i}{y_i}.$$
Define $\Psi^n_\lambda(x)$, $\lambda\in\C^n$, $x\in(\R_{>0})^n$
 recursively by setting $\Psi^1_{\lambda}(x)=x^{-\lambda}$ and, for $n\ge 2$,
\begin{equation}\label{wdef}
\Psi^n_{\lambda_1,\ldots,\lambda_n} = Q^{n,n-1}_{\lambda_n} \Psi^{n-1}_{\lambda_1,\ldots,\lambda_{n-1}}.
\end{equation}
We note here, for later reference, some identities which follow easily from the definitions.
For $a> 0$ we have 
\begin{equation}\label{a}
\Psi^n_\alpha(ax)=a^{-\sum_i\alpha_i} \Psi^n_\alpha(x).
\end{equation}
If $\alpha_i'=\alpha_i+c$ for some $c\in\C$, then
\begin{equation}\label{shift}
\Psi^n_{\alpha'}(x)=\left(\prod_i x_i^{-c}\right) \Psi^n_\alpha(x).
\end{equation}
Finally, if we set $x_i'=1/x_{n-i+1}$, then
\begin{equation}\label{inv}
\Psi^n_{\lambda}(x)=\Psi^n_{-\lambda}(x').
\end{equation}
The functions $\Psi^n_\lambda$ are $GL(n,\R)$-Whittaker functions. The above definition is essentially
Givental's integral formula~\cite{g,jk}  (see also~\cite{gklo,glo}).  These functions were
first introduced by Jacquet~\cite{ja}.  They play an important role in the theory of 
automorphic forms~\cite{bump1,bump,bf,stade,stade-jams,dg,is}
and the quantum Toda lattice~\cite{k,sts,g,jk,gklo,glo,kl}.  In the latter literature they arise as eigenfunctions
of the open quantum Toda chain with $n$ particles with Hamiltonian given by
$$H=-\sum_i \frac{\partial^2}{\partial x_i^2} + 2 \sum_{i=1}^{n-1} e^{x_{i+1}-x_i}.$$
If we define $\psi^n_\lambda(x)=\Psi^n_{-\lambda}(z)$, where $x_i=\log z_i$ for $i=1,\ldots,n$,
then $$H\psi^n_\lambda=-\left(\sum_i\lambda_i^2\right) \psi^n_\lambda.$$
See, for example,~\cite{gklo} for more details.

In the automorphic forms literature the standard `normalisation' is slightly different.
In particular, in the notation of the paper~\cite{is}, we have the relation, for $n\ge 2$:
\begin{equation}\label{rel}
\Psi^n_{-\lambda}(x)=\left(\prod_i x_i\right)^{(1/n)\sum_i\lambda_i} \left(\prod_{j=1}^{n-1} y_j^{-j(n-j)/2} \right) W_{n,a}(y),
\end{equation}
where $a_k=\lambda_k-(1/n)\sum_i\lambda_i$ for $k=1,\ldots,n$ and $\pi y_j =\sqrt{x_{n-j+1}/x_{n-j}},$
for $j=1,\ldots,n-1$.  This is easily verified by comparing the recursion (\ref{wdef}) with a similar
recursion obtained by Ishii and Stade~\cite{is} for the functions $W_{n,a}(y)$, and using the
elementary relation (\ref{shift}).
Indeed, first note that, by (\ref{shift}), we only need to check this for $\lambda=a$, that is,
when $\sum_i\lambda_i=0$. In the case $n=2$ we have, writing $a=(a,-a)$ and $y_1=y$,
$$W_{2,a}(y)=2\sqrt{y} K_{2a}(2\pi y)=\sqrt{y}\Psi^{2}_{a}(x_1,x_2)$$
where $\pi y=\sqrt{x_2/x_1}$ and $K_\nu$ is the Macdonald function
$$K_\nu(z)=\frac12\int_0^\infty t^{\nu-1} e^{-\frac{z}{2}(t+t^{-1})} dt .$$
For $n\ge 3$, in~\cite{is} it is shown that 
\begin{align*}
W_{n,a}(y)=\prod_{j=1}^{n-1} y_j^{(n-j)/2+2a_1(n-j)/(n-1)} \int_{(\R_{>0})^{n-1} }
e^{-\pi \sum_{j=1}^{n-1} y_j^2 u_j +1/u_j} \\
\prod_{j=1}^{n-1} u_j^{(n-2j)/4+na_1/(n-1)}
W_{n-1,b}\left( y_2\sqrt{\frac{u_2}{u_1}},\ldots, y_{n-1}\sqrt{\frac{du_{n-1}}{u_{n-2}}}\right)
\frac{du_1}{u_1}\ldots\frac{du_{n-1}}{u_{n-1}} ,
\end{align*}
where $$b=\left( a_2+\frac{a_1}{n-1},\ldots,a_n+\frac{a_1}{n-1}\right).$$
Making the change of variables 
$$\pi y_j =\sqrt{\frac{x_{n-j+1}}{x_{n-j}} },\qquad \frac\pi{u_j} = \frac{x_{n-j}}{z_{n-j}}, \qquad
\pi y_j^2 u_j = \frac{z_{n-j+1}}{x_{n-j}} ,$$
for $j=1,\ldots,n-1$, and using (\ref{rel}) above, we see that this is equivalent to the recursion
$$\Psi^n_{-a}(x)=\int_{(\R_{>0})^{n-1} } Q_{-a_1}^{n,n-1}(x,z) \Psi^{n-1}_{-a_2,\ldots,-a_n} (z) 
\frac{dz_1}{z_1}\ldots\frac{dz_{n-1}}{z_{n-1}} ,$$
which agrees with (\ref{wdef}) above.

We will also consider the following generalization of the functions $\Psi^n_\lambda$.
For $\lambda\in\C^n$, $x\in(\R_{>0})^n$ and $s\in\C$,
define 
\begin{equation}\label{wgendef1}
\Psi^n_{\lambda;s}(x)=e^{-s/x_n}\Psi_\lambda^n(x);
\end{equation}
for $\lambda\in\C^{n+k}$, $k\ge 1$, and $\Re s>0$, define
\begin{equation}\label{wgendef2}
\Psi^n_{\lambda;s} = 
Q^n_{\lambda_{n+k}} Q^n_{\lambda_{n+k-1}} \ldots Q^n_{\lambda_{n+1}} \Psi^n_{\lambda_1,\ldots,\lambda_n;s} .
\end{equation}
It is straightforward to see that $\Psi^n_{\lambda;s}(x)$ is well-defined, as an absolutely convergent integral, 
for each $x\in\R^n$.  The functions $\Psi^n_{\lambda;s}$
can be regarded as generating functions for `patterns', as we shall now explain.

Let $x\in (\R_{>0})^n$.  We define a {\em pattern} $P$ with {\em shape} $\mbox{sh } P=x$ 
and {\em height} $h\ge n$ to be an array of
positive real numbers
\begin{eqnarray}\label{Ppattern}
P=\begin{array}{cccccccccc}
&&&z_{11}&&&&&\\
&&z_{22}&&z_{21}&&&&\\
&\iddots&&&&\ddots&&&\\
z_{nn}&&&\ldots&&&z_{n1}&&\\
&\ddots&&&&&&\ddots&\\
&&z_{hn}&&&\ldots&&&z_{h1}
\end{array}
\end{eqnarray}
with bottom row $z_{h\cdot} = x$.  The range of indices is 
$$L(n,h)=\{(i,j):\ 1\le i\le h,\ 1\le j\le i\wedge n\}.$$
If $h=n$ then $P$ is a {\em triangle} in the sense of Kirillov~\cite{ki}.
Fix a pattern $P$ as above.  Set $\rho_0=1$ and, for $1\le i\le h$,
$\rho_i=\prod_{j=1}^{i\wedge n} z_{ij}$ and $\tau_i=\rho_i/\rho_{i-1}$.
We shall refer to $\tau$ as the {\em type} of $P$ and write $\tau=\mbox{type }P$.
For $\alpha\in\C^h$ define
\begin{eqnarray}\label{Ptype}
P^\alpha = \prod_{i=1}^h \tau_i^{\alpha_i}.
\end{eqnarray}
For $s\in\C$, define 
\begin{equation}\label{fsp}
\F_s(P)=\frac{s}{z_{nn}} + \sum_{(i,j)\in L(n,h)} \frac{z_{i-1,j}+z_{i+1,j+1}}{z_{ij}}
\end{equation}
with the convention that $z_{ij}=0$ if $(i,j)\notin L(n,h)$.
Denote by $\Pi^h(x)$ the set of patterns with shape $x$ and height $h$.
Then, for $\lambda\in\C^h$ and $\Re s>0$ (this condition is only required if $h>n$)
\begin{equation}\label{gf}
\Psi^n_{\lambda;s}(x)=\int_{\Pi^h(x)} P^{-\lambda} e^{-\F_s(P)} dP
\end{equation}
where
$$dP = \prod_{(i,j)\in L(n,h-1)} \frac{dz_{ij}}{z_{ij}} .$$
This formula is just a re-writing of the above definition (\ref{wgendef2}) of $\Psi^n_{\lambda;s}$.

We remark that, although it is not obvious from the above definition, the function $\Psi^n_\lambda$
is invariant under permutations of the indices $\lambda_1,\ldots,\lambda_n$~\cite{kl,glo}.  In fact, the same 
is true of the function $\Psi^n_{\lambda;s}$, where $\lambda\in\C^{n+k}$, $k\ge 1$ and $\Re s>0$.  That is,
$\Psi^n_{\lambda;s}$ is invariant under permutations of the indices $\lambda_1,\ldots,\lambda_{n+k}$.  This 
follows from the definition (\ref{wgendef2}), using the relation 
\begin{equation}\label{Qc}
Q^n_aR^n_sQ^n_b=Q^n_bR^n_sQ^n_a,
\end{equation}
where $R_s$ denotes multiplication by the function $e^{-s/x_n}$, and the invariance
of $\Psi^n_{\lambda_1,\ldots,\lambda_n}$ under permutations of $\lambda_1,\ldots,\lambda_n$.
The relation (\ref{Qc}) is a straightforward extension of the commutativity property 
$Q^n_aQ^n_b=Q^n_bQ^n_a$ obtained in \cite[Theorem 2.3]{glo}.

There is a Plancherel theorem for the Whittaker functions~\cite{w,sts,kl}, 
which states that the integral transform
$$\hat f(\lambda) = \int_{(\R_{>0})^n} f(x) \Psi^n_\lambda(x) \prod_{i=1}^n \frac{dx_i}{x_i}$$
defines an isometry from $L_2((\R_{>0})^n, \prod_{i=1}^n dx_i/x_i)$ onto 
$L^{sym}_2(\iota\R^n,s_n(\lambda)d\lambda)$, where $L_2^{sym}$ is the space of $L_2$
functions which are symmetric in their variables, $\iota=\sqrt{-1}$ and 
$$s_n(\lambda)=\frac1{(2\pi\iota)^n n!} \prod_{i\ne j} \Gamma(\lambda_i-\lambda_j)^{-1},$$
is the {\em Sklyanin} measure.

\section{Geometric RSK correspondence}   \label{gRSKsec}

The geometric RSK (gRSK) correspondence is a bijective mapping $$T:(\R_{>0})^{n\times m}\to (\R_{>0})^{n\times m}.$$
It is also birational in the sense that both $T$ and its inverse are rational maps.
It was introduced by Kirillov~\cite{ki} as a geometric lifting of the Berenstein-Kirillov correspondence and further
studied by Noumi and Yamada~\cite{ny}.  We will define it here via a sequence of `local moves' on matrix elements.  
This is essentially a reformulation of the row-insertion procedure introduced in~\cite{ny}, 
as will be explained in Section~\ref{nvo} below.

For each $2\le i\le n$ and $2\le j\le m$ define 
a mapping $l_{ij}$ which takes as input a matrix $X=(x_{ij})\in(\R_{>0})^{n\times m}$ and replaces the submatrix
$$ \begin{pmatrix} x_{i-1,j-1}& x_{i-1,j}\\ x_{i,j-1}& x_{ij}\end{pmatrix}$$
of $X$ by its image under the map
\begin{equation}\label{abcd}
\begin{pmatrix} a& b\\ c& d\end{pmatrix} \qquad \mapsto \qquad 
\begin{pmatrix} bc/(ab+ac) & b\\ c& d(b+c) \end{pmatrix},
\end{equation}
and leaves the other elements unchanged.  For $2\le i\le n$ and $2\le j\le m$, 
define $l_{i1}$ to be the mapping that replaces
the element $x_{i1}$ by $x_{i-1,1}x_{i1}$ and $l_{1j}$ to be the mapping that replaces the element $x_{1j}$ by 
$x_{1,j-1}x_{1j}$.  For convenience  define $l_{11}$ to be the identity map.
For $1\le i\le n$ and $1\le j\le m$, set
$$\pi^j_i=l_{ij}\circ\cdots\circ l_{i1},$$
and, for $1\le i\le n$,
\be R_i = \begin{cases} \pi_1^{m-i+1}\circ\cdots\circ\pi^m_i & i\le m\\
\pi^1_{i-m+1} \circ\cdots\circ\pi^m_i & i\ge m . \end{cases}  \label{defR}\ee
The mapping $T$ is defined by
\be T=R_n\circ\cdots\circ R_1. \label{defT}\ee
For example, suppose $n=m=2$.  Then 
$$R_1=\pi^2_1=l_{12}\circ l_{11}=l_{12},\qquad R_2=\pi^1_1\circ\pi^2_2=l_{11}\circ l_{22}\circ l_{21}=l_{22}\circ l_{21}$$
and so
$$T=R_2\circ R_1=l_{22}\circ l_{21}\circ l_{12}.$$
Here is an illustration:
\begin{align*}
T: \begin{pmatrix} a&b\\c&d\end{pmatrix} \stackrel{l_{12}}{\longmapsto}
\begin{pmatrix} a&ab\\c&d\end{pmatrix} \stackrel{l_{21}}{\longmapsto}
\begin{pmatrix} a&ab\\ac&d\end{pmatrix} \stackrel{l_{22}}{\longmapsto}
\begin{pmatrix} bc/(b+c)&ab\\ac&ad(b+c)\end{pmatrix} .
\end{align*}

Note that each $l_{ij}$ is birational.  For example, the inverse of the map (\ref{abcd}) is given by
$$\begin{pmatrix} a& b\\ c& d\end{pmatrix} \qquad \mapsto \qquad 
\begin{pmatrix} bc/(ab+ac) & b\\ c& d/(b+c) \end{pmatrix}.  $$
The birational property of $T$ can thus be seen directly from the above definition.

Each matrix $X\in (\R_{>0})^{n\times m}$ can be identified with a pair of patterns $(P,Q)$
with respective heights $m$ and $n$, and common shape
$$\mbox{sh }P=\mbox{sh Q} =(x_{nm},x_{n-1,m-1},\ldots,x_{n-p+1,m-p+1}),$$ where $p=n\wedge m$,
as illustrated in the following example:
$$ X = \begin{array}{cccc} & & x_{31} & \\ & x_{21} & & x_{32} \\ x_{11} & & x_{22} & \\ & x_{12} & & \end{array} $$
\bigskip
$$P = \begin{array}{ccc}  & x_{31} & \\  x_{21} & & x_{32} \end{array}, \qquad
Q = \begin{array}{cccc}  & x_{12} & & \\ x_{11} & & x_{22} & \\ & x_{21} & & x_{32}  \end{array} $$
\bigskip
$$\mbox{sh }P=\mbox{sh Q} = (x_{32},x_{21}).$$
In the following, we will simply write $X=(P,Q)$ to indicate that $X$ is identified with the pair $(P,Q)$.

The mappings $R_i$ defined above can also be written as
$$R_i=\rho^i_m\circ\cdots\circ\rho^i_2\circ\rho^i_1$$
where
\begin{eqnarray}\label{rodef}
\rho^i_j=\begin{cases} l_{1,j-i+1}\circ\cdots\circ l_{i-1,j-1}\circ l_{ij} & i\le j\\
l_{i-j+1,1}\circ\cdots\circ l_{i-1,j-1}\circ l_{ij} & i\ge j.\end{cases}
\end{eqnarray}
Here we are just using the obvious fact that $l_{ij}\circ l_{i'j'}=l_{i'j'}\circ l_{ij}$ whenever $|i-i'|+|j-j'|>2$.
This representation is closely related to the Bender-Knuth transformations, as we shall now explain.
For each $1\le i\le n$ and $1\le j\le m$, denote by $b_{ij}$ the map on $(\R_{>0})^{n\times m}$ which
takes a matrix $X=(x_{qr})$ and replaces the entry $x_{ij}$ by
\begin{equation}\label{bk1}
x'_{ij} = \frac1{x_{ij}} (x_{i,j-1}+x_{i-1,j}) \left( \frac1{x_{i+1,j}}+\frac1{x_{i,j+1}}\right)^{-1},
\end{equation}
leaving the other entries unchanged, with the conventions that $x_{0j}=x_{i0}=0$, 
$x_{n+1,j}=x_{i,m+1}=\infty$ for $1< i<n$ and $1< j<m$, but 
$x_{10}+x_{01}=x_{n+1,m}^{-1}+x_{n,m+1}^{-1}=1$.
Denote by $r_{j}$ the map which replaces the entry $x_{nj}$ by
$x_{n,j+1}/x_{nj}$ if $j<m$ and $1/x_{nm}$ if $j=m$, leaving the other entries unchanged.  
For $j\le m$, define
\begin{equation}\label{bk}
h_j=\begin{cases} b_{n-j+1,1}\circ\cdots\circ b_{n-1,j-1}\circ b_{nj} & j\le n\\
b_{1,j-n+1}\circ\cdots\circ b_{n-1,j-1}\circ b_{nj} & j\ge n.\end{cases}
\end{equation}
It is straightforward from the definitions to see that $\rho^n_j=h_j\circ r_{j}$.  Now, observe
that if $X=(P,Q)$, then for each $j< m$, $h_j(X)=(t_j(P),Q)$ where $t_j$ is defined by this relation.
It is easy to see that the mappings $b_{ij}$, $h_j$ and $t_j$ are all involutions.

In the case $n=m$, the mappings $t_1,\ldots,t_{n-1}$ are the analogues of the Bender-Knuth
transformations in this setting, as discussed in~\cite{ki}. In this case, if we define, for $i<n$,
\begin{equation}\label{sch}
q_{i}=t_1\circ(t_2\circ t_1)\circ\cdots\circ(t_i\circ\cdots\circ t_1),
\end{equation}
then, as explained in~\cite{ki}, 
the involutions $s_i=q_i\circ t_1 \circ q_i$, $i<n$, satisfy the braid
relations $(s_i s_{i+1})^3=Id.$, 
and hence define an action of $S_n$ on the set of triangles of height $n$.
The mapping $q_{n-1}$ is the analogue of Sch\"utzenberger's involution in this setting.

An immediate consequence of the above re-formulation of gRSK is the following volume preserving property.
Denote the input matrix by $W=(w_{ij})\in (\R_{>0})^{n\times m}$ and the output matrix by 
$T=T(W)=(t_{ij})\in (\R_{>0})^{n\times m}$.
\begin{thm}\label{vp} The gRSK mapping in logarithmic variables 
$$(\log w_{ij},\ 1\le i\le n, 1\le j\le m)\mapsto (\log t_{ij},\ 1\le i\le n, 1\le j\le m)$$
has Jacobian $\pm 1$. \end{thm}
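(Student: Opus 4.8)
The plan is to exploit the description of $T$ as a composition of local moves $l_{ij}$, established earlier in this section. Since $T = R_n \circ \cdots \circ R_1$ and each $R_i$ is itself a composition of the maps $\pi^j_i$, which in turn are compositions of the $l_{ij}$, it suffices to check that each individual local move $l_{ij}$, expressed in logarithmic variables, has Jacobian $\pm 1$; the chain rule for Jacobians then gives the result for $T$. This reduces a potentially formidable computation for the full birational map $T$ to a handful of elementary two-variable (or one-variable) checks.

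First I would treat the boundary moves. The map $l_{i1}$ replaces $x_{i1}$ by $x_{i-1,1}x_{i1}$ and leaves all other entries fixed; in logarithmic variables $u_{qr} = \log x_{qr}$ this sends $u_{i1} \mapsto u_{i-1,1} + u_{i1}$ and fixes everything else, so it is a linear shear whose Jacobian matrix is unipotent triangular with determinant $1$. The same argument applies verbatim to $l_{1j}$, and $l_{11}$ is the identity. Next, for the interior move $l_{ij}$ with $2 \le i \le n$, $2 \le j \le m$, only two entries change: writing $(a,b,c,d)$ for $(x_{i-1,j-1}, x_{i-1,j}, x_{i,j-1}, x_{ij})$, the image is $\big(bc/(ab+ac),\, b,\, c,\, d(b+c)\big)$. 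In logarithmic variables, with $\alpha = \log a$, etc., the new values are $\alpha' = \log b + \log c - \log a - \log(b+c) = \beta + \gamma - \alpha - \log(e^\beta + e^\gamma)$ and $\delta' = \delta + \log(e^\beta + e^\gamma)$, while $\beta$ and $\gamma$ are unchanged. Since $b$ and $c$ are held fixed by the move, the term $\log(b+c)$ is simply a function of the frozen variables; hence $\partial \alpha'/\partial \alpha = -1$, $\partial \delta'/\partial \delta = +1$, and the $2\times 2$ block of partials of $(\alpha',\delta')$ with respect to $(\alpha,\delta)$ is lower triangular with diagonal entries $-1$ and $1$, giving determinant $-1$. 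Off-diagonal dependence on $\beta,\gamma$ contributes nothing to the Jacobian because those coordinates are fixed, so the full Jacobian of $l_{ij}$ in log variables is $-1$.

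Having established $|\det| = 1$ for every $l_{ij}$, the conclusion follows: $T$ in logarithmic variables is a composition of maps each with Jacobian $\pm 1$, so its Jacobian is $\pm 1$ as well. I do not anticipate any serious obstacle here — the only point requiring a moment's care is the bookkeeping observation that in each move the variables $b$ and $c$ are among the \emph{unchanged} entries, so that $\log(b+c)$ acts as an additive constant as far as the two active coordinates are concerned; this is precisely what makes the local Jacobian block triangular. One could alternatively phrase the whole argument by noting that each $l_{ij}$ in log variables has the form $(\text{one coordinate}) \mapsto \pm(\text{that coordinate}) + (\text{function of the others})$, i.e. a "triangular" transformation, and such maps always have unit Jacobian determinant up to sign.
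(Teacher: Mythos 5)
Your proof is correct and follows essentially the same route as the paper's: reduce to the local moves $l_{ij}$ via the decomposition $T=R_n\circ\cdots\circ R_1$, and check that each boundary shear and each interior move has Jacobian $\pm 1$ in logarithmic variables (the paper records exactly the two elementary maps you analyze). Your explicit observation that $\log(b+c)$ depends only on the frozen coordinates, making the active $2\times 2$ block triangular, is a correct and slightly more detailed justification of what the paper states as "easy to see."
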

\begin{proof}
It is easy to see that the Jacobians of the mappings $l_{ij}$ in logarithmic variables are all $\pm 1$.  
This follows from the fact that the mappings
$$(\log a, \log b) \mapsto (\log a, \log a + \log b)$$
$$(\log a,\log b,\log c,\log d)\mapsto (\log(bc/(ab+ac)),\log b,\log c,\log(db+dc))$$ 
each have Jacobian $\pm 1$.  The result follows from the definition~(\ref{defT}) of $T$.
\end{proof}
We remark that, by a similar argument it can be seen that the involutions $q_i,\ i<n$, on the set of triangles 
of height $n$, all have Jacobian $\pm 1$ in logarithmic variables.

We recall here some basic properties of the gRSK map $T$, which are either obvious from the definitions
or proved in the papers~\cite{k,ny}.  Suppose $W\in (\R_{>0})^{n\times m}$ and $T=T(W)=(P,Q)$.
If we define row and column products $R_i=\prod_j w_{ij}$ and 
$C_j=\prod_i w_{ij}$, then $\mbox{type }Q=R$ and $\mbox{type }P=C$.
Note that this implies, for $\lambda\in\C^m$ and $\nu\in\C^n$, 
\begin{equation}\label{pq}
\prod_{ij} w_{ij}^{-\nu_i-\lambda_j} = \prod_i R_i^{-\nu_i} \prod_j C_j^{-\lambda_j} = P^{-\lambda} Q^{-\nu}.
\end{equation}
Also, the following symmetries hold:
\begin{itemize}
\item[] $T(W^t)=T(W)^t$;
\item[]  $T(W)=(P,Q) \iff T(W^t)=(Q,P)$; 
\item[] $W$ is symmetric $\iff$ $T$ is symmetric $\iff P=Q$;
\item[] $W$ is symmetric across the anti-diagonal $\iff Q=q_{n-1}(P)$.
\end{itemize}
The connection to directed polymers is via the following formula for $t_{nm}$:
$$t_{nm}=\sum_{\pi\in\Pi_{n,m}} \prod_{(i,j)\in\pi} w_{ij},$$
where $\Pi_{n,m}$ is the set of directed nearest-neighbor lattice paths in $\Z^2$ from $(1,1)$ to $(n,m)$, 
that is, the set of paths
$\pi=\{\pi(1),\pi(2),\ldots,\pi(n+m-1)\}$ such that $\pi(1)=(1,1)$, $\pi(n+m-1)=(n,m)$
and $\pi(k+1)-\pi(k)\in\{(1,0),(0,1)\}$ for $1\le k<n+m-1$.
We shall refer to the variable $t_{nm}$ as the {\em polymer partition function}.
In this context it is natural to refer to the $w_{ij}$ as {\em weights} and $W$ as the {\em weight matrix}.
In fact, the remaining entries of $T=(P,Q)$ can also be expressed in terms of
similar partition functions, as follows.  For $1\le k\le m$ and $1\le r\le n\wedge k$,
\be t_{n-r+1,k-r+1}\dotsm t_{n-1,k-1} t_{nk} 
= \sum_{(\pi_1,\ldots,\pi_r)\in\Pi^{(r)}_{n,k}} \prod_{(i,j)\in \pi_1\cup\cdots\cup\pi_r} w_{ij},
\label{path1}\ee
where $\Pi^{(r)}_{n,k}$ denotes the set of $r$-tuples of non-intersecting
directed nearest-neighbor lattice paths $\pi_1,\ldots,\pi_r$
starting at positions $(1,1),(1,2),\ldots,(1,r)$ and ending at positions $(n,k-r+1),\ldots,(n,k-1),(n,k)$. (See Figure \ref{fig5}. When we use the path representation we draw the weight matrix in Cartesian coordinates.) 
This determines the entries of $P$.  The entries of $Q$ are given by similar formulae using
$T(W^t)=(Q,P)$.  We note here the following identity, which follows  from the
above lattice path representation for $T$: setting $p=n\wedge m$, we have
\begin{equation}\label{t11}
\sum_{i=1}^p \frac1{w_{i,p-i+1}} = \frac1{t_{11}}.
\end{equation}
To see this if $n\le m$, take the ratio of \eqref{path1} for $\Pi^{(n-1)}_{n,n}$ and $\Pi^{(n)}_{n,n}$. 
In the opposite case apply the same to $W^t$. 

\setlength{\unitlength}{1.3pt}

\begin{figure}[t]
 \begin{center}
\begin{tikzpicture}[scale=.7]
\draw[help lines] (1,1) grid (7,8);
\draw[very thick] (1,1)--(2,1)--(3,1)--(3,2)--(4,2)--(4,3)--(5,3)--(6,3)--(7,3)--(7,4);
\draw[very thick] (1,2)--(2,2)--(2,3)--(2,4)--(3,4)--(4,4)--(5,4)--(5,5)--(6,5)--(7,5);
\draw[very thick] (1,3)--(1,4)--(1,5)--(2,5)--(3,5)--(3,6)--(4,6)--(5,6)--(6,6)--(7,6);
\draw [fill] (1,1) circle [radius=0.1];
\draw [fill] (1,2) circle [radius=0.1];
\draw [fill] (1,3) circle [radius=0.1];
\draw [fill] (7,4) circle [radius=0.1];
\draw [fill] (7,5) circle [radius=0.1];
\draw [fill] (7,6) circle [radius=0.1];

\node at (0,1) {\small{$(1,1)$}};
\node at (8,6) {\small{$(n,k)$}};
\node at (8,8) {\small{$(n,m)$}};
\end{tikzpicture}

\end{center}  
\caption{ \small Three paths $(\pi_1, \pi_2, \pi_3)$  of a particular  3-tuple in  $\Pi^{(3)}_{n,k}$ in an $n\times m$ weight matrix. 
Note that the picture is in Cartesian coordinates.  The  paths start at the lower left at $(1,1)$, $(1,2)$ and 
$(1,3)$ and end at the upper right at $(n,k-2)$, $(n,k-1)$, $(n,k)$. }  \label{fig5}
\end{figure}
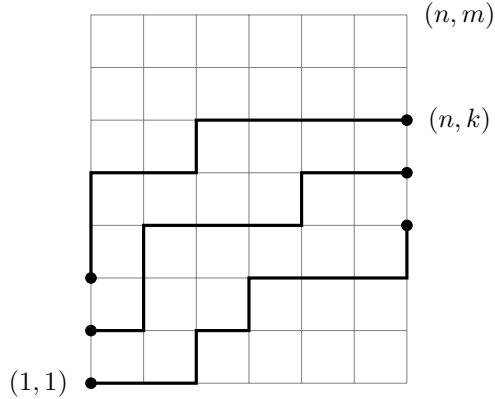

Now, for $X\in (\R_{>0})^{n\times m}$ and $s\in\C$, define
\begin{equation}\label{e}
\E_s(X)=\frac{s}{x_{11}}+\sum_{ij} \frac{x_{i-1,j}+x_{i,j-1}}{x_{ij}},
\end{equation}
where 
the summation is over $1\le i\le n,\ 1\le j\le m$ with the conventions $x_{ij}=0$ for $i=0$ or $j=0$.
Note that, if $X=(P,Q)$, then
$$\E_s(X)=\begin{cases} \F_0(P)+\F_s(Q) & n\ge m\\ \F_s(P)+\F_0(Q) & n\le m  \end{cases}$$
where $\F_s$ is defined by (\ref{fsp}).  An important property of the maps $b_{ij}$ defined by
(\ref{bk1}) above is that they preserve the quantity $\E_0(X)$, that is, $\E_0\circ b_{ij}=\E_0$.
To see this, recall that the map $b_{ij}$ takes a matrix $X=(x_{qr})$ and replaces the entry 
$x_{ij}$ by
$$x'_{ij} = \frac1{x_{ij}} (x_{i,j-1}+x_{i-1,j}) \left( \frac1{x_{i+1,j}}+\frac1{x_{i,j+1}}\right)^{-1},$$
leaving the other entries unchanged, with the conventions that $x_{0j}=x_{i0}=0$, 
$x_{n+1,j}=x_{i,m+1}=\infty$ for $1<i<n$ and $1<j<m$, and 
$x_{10}+x_{01}=x_{n+1,m}^{-1}+x_{n,m+1}^{-1}=1$.  
It is readily verified that
\begin{equation}\label{b-inv}
\frac{x'_{i,j-1}+x'_{i-1,j}}{x'_{ij}}+\frac{x'_{ij}}{x'_{i+1,j}}+\frac{x'_{ij}}{x'_{i,j+1}}
= \frac{x_{i,j-1}+x_{i-1,j}}{x_{ij}}+\frac{x_{ij}}{x_{i+1,j}}+\frac{x_{ij}}{x_{i,j+1}}
\end{equation}
with the conventions that $x_{0j}=x_{i0}=x'_{0j}=x'_{i0}=0$ and 
$x_{n+1,j}=x_{i,m+1}=x'_{n+1,j}=x'_{i,m+1}=\infty$ for each $i$ and $j$.
This implies $\E_0(b_{ij}(X))=\E_0(X)$.
We remark that, in particular, this implies $\E_0\circ h_j=\E_0$, 
$\F_0\circ t_j=\F_0$ for all $j< m$ and, in the case $m=n$, $\F_0\circ q_{n-1}=\F_0$, 
where $q_{n-1}$ is the geometric analogue of Sch\"utzenberger's involution defined by (\ref{sch}).

The cornerstone of the present paper is the following identity which, combined with Theorem~\ref{vp},
explains the appearance of $GL(n,\R)$-Whittaker functions in the context of geometric RSK.
\begin{thm}\label{bi}   Let $W\in (\R_{>0})^{n\times m}$, $T=T(W)$ and $s\in\C$.  Then
$$\sum_{i=1}^p \frac{s}{w_{i,p-i+1}} + \sum {}^{'} \frac{1}{w_{ij}} = \E_s(T),$$
where $p=n\wedge m$ and $\sum^{'}$ denotes the sum over $1\le i\le n,\ 1\le j\le m$ such that $j\ne p-i+1$.  
\end{thm}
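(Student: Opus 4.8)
The plan is to combine the description of $T$ as a composition of local moves with an induction on the number of rows. Two preliminary reductions make the statement cleaner. Since $\E_s(X)=s/x_{11}+\E_0(X)$ and, by \eqref{t11}, $s/t_{11}=\sum_{i=1}^{p}s/w_{i,p-i+1}$, the asserted identity is equivalent to its $s=0$ instance $\sum{}^{'}1/w_{ij}=\E_0(T(W))$. Moreover both sides of this reduced identity are unchanged when $W$ is replaced by $W^{t}$: indeed $T(W^{t})=T(W)^{t}$, clearly $\E_0(X^{t})=\E_0(X)$, and the index set $\{(i,j):\ j\ne p-i+1\}$ is symmetric under transposition. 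Hence we may assume $n\le m$, so $p=n$, and induct on $n$; the case $n=1$ is immediate since then $t_{1j}=w_{11}\cdots w_{1j}$.

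For the inductive step, observe that $R_1,\dots,R_{n-1}$ involve only local moves $l_{ij}$ with $i\le n-1$; hence $R_{n-1}\circ\cdots\circ R_1$ leaves the last row of $W$ untouched and acts on the top $n-1$ rows exactly as the gRSK map for $(n-1)\times m$ matrices. Writing $W'$ for the top $(n-1)\times m$ block and $P':=T(W')$, we get $T(W)=R_n(\widetilde W)$, where $\widetilde W$ has top $n-1$ rows $P'$ and last row $W_{n,\cdot}$. Using $\rho^n_j=h_j\circ r_j$ we write $R_n=(h_m\circ r_m)\circ\cdots\circ(h_1\circ r_1)$. The crucial point is that the corrected energy $\E''(X):=1/x_{11}+x_{nm}+\E_0(X)$ is invariant under every $h_j$: each $h_j$ is a composition of maps $b_{kl}$, and by \eqref{b-inv} every $b_{kl}$ preserves $\E_0$, with two exceptions dictated by the two special boundary conventions --- $b_{11}$ (which occurs only in $h_n$ and, because $x_{10}+x_{01}=1$, preserves $1/x_{11}+\E_0$) and $b_{nm}$ (which occurs only in $h_m$ and, because $x_{n+1,m}^{-1}+x_{n,m+1}^{-1}=1$, preserves $x_{nm}+\E_0$); as neither of $b_{11},b_{nm}$ alters the exceptional entry of the other, $\E''$ is preserved throughout. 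Consequently $\E''(T(W))-\E''(\widetilde W)$ equals the sum over $j=1,\dots,m$ of the increments of $\E''$ caused by the maps $r_j$.

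It remains to evaluate these increments and verify the arithmetic. The ordering built into $R_n$ ensures that, when $r_j$ acts, the entries $x_{nj},x_{n,j+1}$ are still the original weights $w_{nj},w_{n,j+1}$ (with the convention $w_{n,m+1}:=1$ covering $j=m$), while $x_{n-1,j}=P'_{n-1,j}$ and $x_{n,j-1}$ has already reached its final value $t_{n,j-1}$. A short computation then shows that $r_j$ changes $\E''$ by $(A_j-1)\bigl(w_{nj}/w_{n,j+1}-1/w_{nj}\bigr)$, where $A_j:=t_{n,j-1}+P'_{n-1,j}$. Inserting the polymer recursion $t_{nj}=w_{nj}\,(P'_{n-1,j}+t_{n,j-1})$ (immediate from \eqref{path1}), equivalently $A_j=t_{nj}/w_{nj}$, makes the $t_{nj}$-part of $\sum_j$ telescope. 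On the other side, splitting off the last row gives $\E_0(\widetilde W)=\E_0(P')+\sum_j (P'_{n-1,j}+w_{n,j-1})/w_{nj}$, so $\E''(\widetilde W)=1/P'_{11}+w_{nm}+\E_0(P')+\sum_j (P'_{n-1,j}+w_{n,j-1})/w_{nj}$, while $\E''(T(W))=1/t_{11}+t_{nm}+\E_0(T(W))$. Substituting \eqref{t11} for $W$ and for $W'$, together with the inductive hypothesis $\E_0(P')=\sum_{i\le n-1,\ j\ne n-i}1/w_{ij}$, all terms cancel except exactly $\sum{}^{'}1/w_{ij}=\E_0(T(W))$, closing the induction.

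The main obstacle is this last step. One has to track, throughout the row-insertion, which entries still equal the original weights and which have already been updated --- this is what identifies the arguments of each $r_j$ --- then carry out the resulting telescoping sum, taking care of the two exceptional corner conventions at $(1,1)$ and $(n,m)$, which is precisely what forces the correction terms $1/x_{11}$ and $x_{nm}$ into $\E''$. (A more computational alternative would be to verify $\E_0(T(W))=\sum{}^{'}1/w_{ij}$ directly from the non-intersecting-path formulas \eqref{path1}, but the local-move induction seems cleaner and is the natural companion to Theorem~\ref{vp}.)
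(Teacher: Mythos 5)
Your proof is correct and follows essentially the same route as the paper's: induction on the number of rows via the recursive structure of $T$, the decomposition $R_n=\rho^n_m\circ\cdots\circ\rho^n_1$ with $\rho^n_k=h_k\circ r_k$, and the invariance \eqref{b-inv} of $\E_0$ under the $b_{ij}$ maps together with the two corner conventions. The only difference is bookkeeping: the paper tracks the change through a family of interpolating functionals $\E^{n,m;k}$ satisfying $\E^{n,m;k}\circ\rho^n_k=\E^{n,m;k-1}$, whereas you isolate all the change into the $r_j$ steps via the corrected energy $\E''$ and then telescope using $t_{nj}=w_{nj}(t_{n,j-1}+P'_{n-1,j})$ and \eqref{t11} --- the two computations are equivalent.
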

\begin{proof}
From the identity (\ref{t11}), we can assume without loss of generality that $s=1$.
We will prove the theorem by induction on $n$ and $m$.  The statement is immediate in the case $n=m=1$.
Write $R_i=R_i^{n,m}$, $T=T^{n,m}$ and $\E_s^{n,m}$ for the mappings defined above.
Recall that $T^{m,n}(W^t)=[T^{n,m}(W)]^t$, for any values of $m$ and $n$.
It therefore suffices to show that the proposition holds for $T^{n,m}$,
assuming that $n\ge m$ and that the proposition holds for $T^{n-1,m}$.

Write $W_{n-1,m}=(w_{ij},\ 1\le i\le n-1,\ 1\le j\le m)$, $S=T^{n-1,m}(W_{n-1,m})$ and $T=T^{n,m}(W)$.
Then
$$T=R_n^{n,m} \begin{pmatrix} S \\ w_{n1}\ \ldots\ w_{nm} \end{pmatrix} ,$$
and we are required to show that
$$\E_1^{n,m}(T)=\E_1^{n-1,m}(S)+\sum_{j=1}^m \frac1{w_{nj}}.$$
Now,
$$R^{n,m}_n = \rho^n_m\circ\cdots\circ\rho^n_2\circ\rho^n_1$$
where 
$$\rho^n_k= h_k \circ r_{k}=b_{n-k+1,1}\circ\cdots\circ b_{nk}\circ r_{k}.$$
Set $$T^{(0)}=\begin{pmatrix} S \\ w_{n1}\ \ldots\ w_{nm} \end{pmatrix} $$
and, for $k=1,\ldots,m$, $$T^{(k)}=\rho^n_k\circ\cdots\circ\rho^n_2\circ\rho^n_1 (T^{(0)}).$$
For $X\in (\R_{>0})^{n\times m}$ and $0\le k\le m$, define
$$\E^{n,m;k}(X)=\frac{1}{x_{11}}+ \sum_{ij}^{(k)} \frac{x_{i-1,j}+x_{i,j-1}}{x_{ij}}+\sum_{j=k+1}^m \frac1{x_{nj}},$$
where $X=(x_{ij})$ and the first summation is over pairs of indices $(i,j)$ such that either $1\le i< n$ 
and $1\le j\le m$ or $i=n$ and $1\le j\le k$, with the conventions $x_{ij}=0$ for $i=0$ or $j=0$.
Note that
$$\E^{n,m;0}(T^{(0)})=\E_1^{n-1,m}(T)+\sum_{j=1}^m \frac1{w_{nj}},\qquad 
\E^{n,m;m}(T^{(m)})=\E_1^{n,m}(T).$$
We will show that 
\begin{equation}\label{bbi}
\E^{n,m;k}\circ\rho^n_k=\E^{n,m;k-1}
\end{equation} 
for each $k=1,\ldots,m$.
Note that this implies $$\E^{n,m;k}(T^{(k)})=\E^{n,m;k-1}(T^{(k-1)})$$ for each $k=1,\ldots,m$, 
and the statement of the theorem follows.  

Let $X=(x_{ij})\in (\R_{>0})^{n\times m}$ and write 
$$X'=(x'_{ij})=\rho^n_k(X)= h_k\circ r_k (X).$$
Note that $x'_{ij}=x_{ij}$ for all $(i,j)$ except $(n-q+1,k-q+1)$, $1\le q\le k$.
Applying $b_{nk}\circ r_k$ gives the relation
$$ \frac{x'_{n,k-1}+x'_{n-1,k}}{x'_{nk}}= \frac1{x_{nk}}.$$
The next three relations follow from the invariance of $\E_0$ under the $b_{ij}$
mappings as discussed earlier, see (\ref{b-inv}).
If $(i,j)=(n-q+1,k-q+1)$ for some $1<q<k$, then
$$\frac{x'_{i,j-1}+x'_{i-1,j}}{x'_{ij}}+\frac{x'_{ij}}{x'_{i+1,j}}+\frac{x'_{ij}}{x'_{i,j+1}}
= \frac{x_{i,j-1}+x_{i-1,j}}{x_{ij}}+\frac{x_{ij}}{x_{i+1,j}}+\frac{x_{ij}}{x_{i,j+1}}.$$
If $k<n$, then
$$\frac{x'_{n-k,1}}{x'_{n-k+1,1}}+\frac{x'_{n-k+1,1}}{x'_{n-k+2,1}}+\frac{x'_{n-k+1,1}}{x'_{n-k+1,2}}
= \frac{x_{n-k,1}}{x_{n-k+1,1}}+\frac{x_{n-k+1,1}}{x_{n-k+2,1}}+\frac{x_{n-k+1,1}}{x_{n-k+1,2}};$$
If $k=n$ (this can only occur if $m=n$), then
$$\frac{1}{x'_{11}}+\frac{x'_{11}}{x'_{21}}+\frac{x'_{11}}{x'_{12}}
= \frac{1}{x_{11}}+\frac{x_{11}}{x_{21}}+\frac{x_{11}}{x_{12}}.$$
It follows that $\E^{n,m;k}(X')=\E^{n,m;k-1}(X)$, as required.
\end{proof}

Let $s>0$ and consider the measure on input matrices $(w_{ij})$ defined by
$$\nu_{\hat\theta,\theta;s}(dw) = \prod_{ij} w_{ij}^{-\hat\theta_i-\theta_j}
\exp\left( -  \sum_{i=1}^p \frac{s}{w_{i,p-i+1}} - \sum {}^{'} \frac1{w_{ij}} \right) \prod_{ij} \frac{dw_{ij}}{w_{ij}},$$
where $\hat\theta_i+\theta_j>0$ for each $i$ and $j$.
Note that
$$\int_{(\R_{>0})^{n\times m} }\nu_{\hat\theta,\theta;s}(dw)
 = s^{-\sum_{i=1}^p (\hat\theta_i+\theta_{p-i+1})} \prod_{ij}\Gamma(\hat \theta_i+\theta_j) .$$
Suppose $W\in (\R_{>0})^{n\times m}$ and $T=T(W)=(P,Q)$.  
Define a mapping $\sigma: (\R_{>0})^{n\times m} \to (\R_{>0})^p$ by
\begin{equation}\label{sigma}
\sigma(W)=\mbox{sh }P=\mbox{sh }Q =(t_{nm},t_{n-1,m-1},\ldots,t_{n-p+1,m-p+1}),
\end{equation}
where $p=n\wedge m$.
The next two corollaries are essentially a re-formulation of two of the main results in~\cite{cosz}.

\begin{cor}\label{pft}
The push-forward of the measure $\nu_{\hat\theta,\theta;s}$ under the geometric RSK map $T$ is given by
$$\nu_{\hat\theta,\theta;s}\circ T^{-1} (dt) = P^{-\theta} Q^{-\hat\theta} e^{ - \E_s(T) }
 \prod_{ij} \frac{dt_{ij}}{t_{ij}} .$$
\end{cor}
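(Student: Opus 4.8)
The plan is to recognize this as a direct change-of-variables computation, drawing on the three ingredients already assembled: the volume-preserving property (Theorem~\ref{vp}), the multiplicative identity \eqref{pq}, and the additive identity (Theorem~\ref{bi}). Since $T$ is a bijection of $(\R_{>0})^{n\times m}$ onto itself, the push-forward $\nu_{\hat\theta,\theta;s}\circ T^{-1}$ is obtained simply by substituting $w=T^{-1}(t)$ into the density of $\nu_{\hat\theta,\theta;s}$ and inserting the Jacobian factor of $T^{-1}$.

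First I would pass to logarithmic coordinates, writing $u_{ij}=\log w_{ij}$ and $v_{ij}=\log t_{ij}$. Then the reference measure $\prod_{ij} dw_{ij}/w_{ij}$ becomes Lebesgue measure $\prod_{ij} du_{ij}$, and Theorem~\ref{vp} asserts that the induced map $u\mapsto v$ has Jacobian determinant $\pm 1$. Consequently $\prod_{ij} dw_{ij}/w_{ij}$ transforms into $\prod_{ij} dt_{ij}/t_{ij}$ with no additional factor.

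Next I would rewrite the two remaining factors of the density in terms of the output matrix $T=T(W)=(P,Q)$. For the monomial prefactor, apply \eqref{pq} with $\nu=\hat\theta$ and $\lambda=\theta$ to obtain $\prod_{ij} w_{ij}^{-\hat\theta_i-\theta_j}=P^{-\theta}Q^{-\hat\theta}$. For the exponential factor, Theorem~\ref{bi} gives $\sum_{i=1}^p s/w_{i,p-i+1}+\sum{}^{'}1/w_{ij}=\E_s(T)$, so that the exponential in the density of $\nu_{\hat\theta,\theta;s}$ becomes $e^{-\E_s(T)}$. Substituting these three transformations into the expression for $\nu_{\hat\theta,\theta;s}(dw)$ yields precisely the claimed formula for $\nu_{\hat\theta,\theta;s}\circ T^{-1}(dt)$.

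There is essentially no obstacle to overcome here: the substance of the argument resides in Theorems~\ref{vp} and~\ref{bi}. The only points deserving a word of care are (i) that $T$ is genuinely a bijection of $(\R_{>0})^{n\times m}$, so that the push-forward is given by the naive change-of-variables formula with no concern about the image or about exceptional null sets — this follows from the birationality of $T$ established in Section~\ref{gRSKsec}; and (ii) that the quantities $P^{-\theta}$, $Q^{-\hat\theta}$ and $\E_s(T)$ on the right-hand side are functions of $t$ alone, via the identification $T(W)=(P,Q)$, so that the right-hand side is a legitimate density in the $t$ variables.
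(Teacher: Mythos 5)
Your proof is correct and follows exactly the paper's route: the paper's own proof is the one-line observation that the corollary "follows immediately from Theorems~\ref{vp}, \ref{bi} and the relation (\ref{pq})", which is precisely the three-ingredient change-of-variables argument you spell out. Your version simply makes the logarithmic-coordinates substitution and the bijectivity point explicit.
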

\begin{proof}
This follows immediately from Theorems~\ref{vp}, \ref{bi} and the relation (\ref{pq}).
\end{proof}

\begin{cor}\label{pf} 
The push-forward of $\nu_{\hat\theta,\theta;s}$ under $\sigma$ is given by
$$\nu_{\hat\theta,\theta;s}\circ\sigma^{-1} (dx) = 
\begin{cases}
\Psi^p_\theta(x) \Psi^p_{\hat\theta;s}(x)\prod_{i=1}^p \frac{dx_i}{x_i} &n\ge m \\
\Psi^p_{\theta;s}(x) \Psi^p_{\hat\theta}(x)\prod_{i=1}^p \frac{dx_i}{x_i} & n\le m.\end{cases} $$
\end{cor}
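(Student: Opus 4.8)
The plan is to push the measure forward in two stages. Since $\sigma=\mathrm{sh}\circ T$, where $\mathrm{sh}$ reads off the common shape of the pair of patterns, we have $\nu_{\hat\theta,\theta;s}\circ\sigma^{-1}=\big(\nu_{\hat\theta,\theta;s}\circ T^{-1}\big)\circ\mathrm{sh}^{-1}$, so by Corollary~\ref{pft} it suffices to integrate the density $P^{-\theta}Q^{-\hat\theta}\,e^{-\E_s(T)}$ (with respect to $\prod_{ij}dt_{ij}/t_{ij}$) over all $T=(P,Q)$ with common shape equal to a fixed $x\in(\R_{>0})^p$. The first thing to record is that, under the identification $T=(P,Q)$, the $nm$ entries of $T$ are in bijection with the $p$ shape coordinates $x_1,\dots,x_p$ together with the interior entries of $P$ (those indexed by $L(p,m-1)$) and of $Q$ (those indexed by $L(p,n-1)$); the index count $p(m+n-p)=nm$ confirms that no entries are missed or double counted. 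Hence $\prod_{ij}dt_{ij}/t_{ij}$ factors as $\big(\prod_{i=1}^{p}dx_i/x_i\big)\,dP\,dQ$, with $dP$, $dQ$ the interior pattern measures of Section~\ref{wp}.

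Next I would factor the integrand. By~(\ref{pq}) the weight $\prod_{ij}w_{ij}^{-\hat\theta_i-\theta_j}$ equals $P^{-\theta}Q^{-\hat\theta}$, and by the decomposition of $\E_s$ recorded just after~(\ref{e}) we have, when $n\ge m$, $\E_s(T)=\F_0(P)+\F_s(Q)$. Thus the density becomes $\big(P^{-\theta}e^{-\F_0(P)}\big)\big(Q^{-\hat\theta}e^{-\F_s(Q)}\big)$, the first factor a function of the shape and the interior of $P$ only, the second a function of the shape and the interior of $Q$ only. Since $\nu_{\hat\theta,\theta;s}$ has finite total mass $s^{-\sum_i(\hat\theta_i+\theta_{p-i+1})}\prod_{ij}\Gamma(\hat\theta_i+\theta_j)$ under the standing assumption $\hat\theta_i+\theta_j>0$, Fubini applies and the two interior integrations separate. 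By the generating-function formula~(\ref{gf}), $\int P^{-\theta}e^{-\F_0(P)}\,dP=\Psi^p_{\theta;0}(x)=\Psi^p_\theta(x)$ --- the subscript vector $\theta\in\C^m$ has length equal to the height $m$ of $P$, and since $p=m$ in this case the parameter is $0$, so the generalized Whittaker function reduces to the ordinary one --- while $\int Q^{-\hat\theta}e^{-\F_s(Q)}\,dQ=\Psi^p_{\hat\theta;s}(x)$, with $\hat\theta\in\C^n$ of length equal to the height $n\ge p$ of $Q$. Multiplying the two gives $\nu_{\hat\theta,\theta;s}\circ\sigma^{-1}(dx)=\Psi^p_\theta(x)\,\Psi^p_{\hat\theta;s}(x)\prod_{i=1}^p dx_i/x_i$, and the case $n\le m$ is identical after interchanging the roles of $P$ and $Q$ (equivalently, apply the above to $W^t$, using $T(W^t)=(Q,P)$ and $\mathrm{sh}\,P=\mathrm{sh}\,Q$).

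The routine parts are the two interior integrations and the Fubini step; the only point requiring care is the bookkeeping in the first paragraph --- checking that the entries of $T$ partition exactly into the $p$ shape coordinates and the two sets of interior pattern coordinates, so that $\prod_{ij}dt_{ij}/t_{ij}$ factors with trivial Jacobian, and then matching the lengths of the index vectors $\theta\in\C^m$ and $\hat\theta\in\C^n$ to the heights $m$ and $n$ of $P$ and $Q$, since this is precisely what determines which of the two Whittaker factors carries the parameter $s$.
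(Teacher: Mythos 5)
Your proposal is correct and follows essentially the same route as the paper: the authors deduce Corollary~\ref{pf} directly from Corollary~\ref{pft} together with the integral formula~(\ref{gf}), which is precisely your two-stage push-forward with the interior pattern variables integrated out. The bookkeeping you spell out --- the factorization of $\prod_{ij}dt_{ij}/t_{ij}$ into shape and interior coordinates and the splitting of $\E_s(T)$ into $\F_0(P)+\F_s(Q)$ --- is exactly the computation the paper carries out explicitly in its proof of Corollary~\ref{cor-bs-gen}.
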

\begin{proof}
This follows from Corollary \ref{pft} and the integral formula (\ref{gf}) for $\Psi^p_{\lambda;s}$.
\end{proof}

We also obtain from Theorems \ref{vp} and \ref{bi} the following integral identity.
This is the analogue of the Cauchy-Littlewood identity in this setting.
\begin{cor} \label{cor-bs-gen} Suppose $s>0$, $\lambda\in\C^m$ and $\nu\in\C^n$, where $n\ge m$ and
$\Re (\lambda_i+\nu_j)>0$ for all $i$ and $j$.  Then
\begin{equation}\label{bs-gen} 
\int_{(\R_{>0})^m } \Psi^m_{\nu;s}(x) \Psi^m_\lambda(x) \prod_{i=1}^m \frac{dx_i}{x_i}
= s^{-\sum_{i=1}^m (\nu_i+\lambda_i)} \prod_{ij}\Gamma(\nu_i+\lambda_j) .
\end{equation} 
\end{cor}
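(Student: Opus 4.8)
The plan is to recognise the left-hand side of \eqref{bs-gen} as the total mass of the measure $\nu_{\hat\theta,\theta;s}$ on $(\R_{>0})^{n\times m}$, for the parameter choice $\hat\theta=\nu\in\C^n$, $\theta=\lambda\in\C^m$ (which is the length bookkeeping these symbols require), and then to evaluate that total mass in two ways: directly by Fubini, since the integral factors into one-dimensional gamma integrals; and indirectly by pushing the measure forward first under $T$ and then under $\sigma$ and integrating out the remaining pattern variables. Since $n\ge m$ we have $p=n\wedge m=m$, and Corollary~\ref{pf} gives
$$\nu_{\hat\theta,\theta;s}\circ\sigma^{-1}(dx)=\Psi^m_\lambda(x)\,\Psi^m_{\nu;s}(x)\prod_{i=1}^m\frac{dx_i}{x_i},$$
so integrating this identity over $(\R_{>0})^m$ produces exactly the left side of \eqref{bs-gen}, while the total mass of $\nu_{\hat\theta,\theta;s}$ is the right side.

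Concretely, the steps I would carry out are: (i) check that for $s>0$ and $\Re(\nu_i+\lambda_j)>0$ the measure $\nu_{\hat\theta,\theta;s}$ is absolutely convergent and that its total mass equals $s^{-\sum_{i=1}^m(\nu_i+\lambda_i)}\prod_{ij}\Gamma(\nu_i+\lambda_j)$ --- this is the computation recorded in the text just after Theorem~\ref{bi}, noting that with $p=m$ the $s$-exponent $\sum_{i=1}^p(\hat\theta_i+\theta_{p-i+1})$ reindexes to $\sum_{i=1}^m\nu_i+\sum_{j=1}^m\lambda_j=\sum_{i=1}^m(\nu_i+\lambda_i)$; (ii) invoke Corollary~\ref{pft}, which rests on Theorems~\ref{vp} and~\ref{bi} together with \eqref{pq}, to transport the measure to $T$-coordinates, and then Corollary~\ref{pf}, which uses the integral representation \eqref{gf} of $\Psi^m_{\nu;s}$, to integrate out the internal pattern variables and arrive at the displayed formula for $\nu_{\hat\theta,\theta;s}\circ\sigma^{-1}$; (iii) integrate that formula over $(\R_{>0})^m$ and equate the result with the total mass from (i), which is \eqref{bs-gen}.

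The one genuine subtlety, and the point I would treat with care, is that the measure $\nu_{\hat\theta,\theta;s}$ and Corollaries~\ref{pft}--\ref{pf} were set up for real positive parameters, whereas here $\lambda$ and $\nu$ are complex with positive real part. There are two routes: either observe that every ingredient --- the Jacobian computation of Theorem~\ref{vp}, the identity of Theorem~\ref{bi}, relations \eqref{pq} and \eqref{gf}, and the Fubini evaluation of the total mass --- is an algebraic identity or a convergent integral manipulation insensitive to the phase of the parameters, and hence goes through verbatim once $\Re(\hat\theta_i+\theta_j)>0$ guarantees absolute integrability; or prove \eqref{bs-gen} first for real $\lambda,\nu$ and then analytically continue, using that both sides are holomorphic on $\{\,\Re(\lambda_i+\nu_j)>0\ \forall i,j\,\}$, holomorphy of the left side following by differentiation under the integral sign with a locally uniform dominating function built from the integrand evaluated at the real parts of the parameters. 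I expect this convergence/continuation bookkeeping to be the main obstacle, though it is routine rather than deep; once it is in place, the proof is an immediate assembly of Theorems~\ref{vp} and~\ref{bi} and Corollaries~\ref{pft}--\ref{pf}.
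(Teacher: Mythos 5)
Your proposal is correct and is essentially the paper's own argument: the authors likewise evaluate the total mass of $\nu_{\hat\theta,\theta;s}$ (with $\hat\theta=\nu$, $\theta=\lambda$) in two ways, once directly by Fubini and once by transporting the measure through $T$ via Theorems~\ref{vp} and~\ref{bi} and the relation \eqref{pq}, then integrating out the pattern variables using \eqref{gf}. Your extra care about complex parameters with $\Re(\nu_i+\lambda_j)>0$ is a reasonable observation, but the paper's chain of equalities is written directly as absolutely convergent integral manipulations, so no separate continuation step is needed.
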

\begin{proof} From the definitions (\ref{gf}), (\ref{e}), the identity (\ref{pq}), Theorems \ref{vp} and \ref{bi}, 
and Fubini's theorem,
\begin{align*}
& s^{-\sum_{i=1}^m (\nu_i+\lambda_i)}  \prod_{ij}\Gamma(\nu_i+\lambda_j) \\
&= \int_{(\R_{>0})^{n\times m} } 
\prod_{ij} w_{ij}^{-\nu_i-\lambda_j-1}
\exp\left( -  \sum_{i=1}^m \frac{s}{w_{i,m-i+1}} - \sum {}^{'} \frac1{w_{ij}} \right) \prod_{ij} dw_{ij} \\
&= \int_{(\R_{>0})^{n\times m} } 
P^{-\lambda} Q^{-\nu} e^{ - \E_s(T) } \prod_{ij} \frac{dt_{ij}}{t_{ij}} \\
&= \int_{(\R_{>0})^m }\left( \int_{\Pi^n(x)} Q^{-\nu} e^{-\F_s(Q)} dQ\right)
\left( \int_{\Pi^m(x)} P^{-\lambda} e^{-\F_0(P)} dP\right) \prod_{i=1}^m \frac{dx_i}{x_i} \\
&= \int_{(\R_{>0})^m }  \Psi^m_{\nu;s}(x) \Psi^m_\lambda(x) \prod_{i=1}^m \frac{dx_i}{x_i} ,\\
\end{align*}
as required.
\end{proof}

When $m=n-1$ this is equivalent to an integral identity which was conjectured by Bump~\cite{bump}
and proved by Stade~\cite[Theorem 3.4]{stade-jams}, see Theorem~\ref{thm-stade2-psi} below.  
We note that in this case, the identity is proved in~\cite{stade-jams} without assuming the condition 
$\Re (\lambda_i+\nu_j)>0$ for all $i$ and $j$.  In this case, the integral is associated with archimedean 
$L$-factors of automorphic $L$-functions on $GL(n-1,\R)\times GL(n,\R)$. 

When $n=m$, (\ref{bs-gen}) becomes: 
\begin{cor}\label{bs1}
Suppose $s>0$ and $\lambda,\nu\in\C^n$, where $\Re (\lambda_i+\nu_j)>0$ for all $i$ and $j$.  Then
$$
\int_{(\R_{>0})^n } e^{-s/x_n} \Psi^n_\nu(x) \Psi^n_{\lambda}(x)\prod_{i=1}^n \frac{dx_i}{x_i}
= s^{-\sum_{i=1}^n (\nu_i+\lambda_i)} \prod_{ij}\Gamma(\nu_i+\lambda_j) .
$$
\end{cor}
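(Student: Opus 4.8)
The plan is to obtain this as the $n=m$ specialization of Corollary~\ref{cor-bs-gen}. There one takes $n\ge m$, $\nu\in\C^n$, $\lambda\in\C^m$ and gets
$$\int_{(\R_{>0})^m}\Psi^m_{\nu;s}(x)\,\Psi^m_\lambda(x)\prod_{i=1}^m\frac{dx_i}{x_i}=s^{-\sum_{i=1}^m(\nu_i+\lambda_i)}\prod_{ij}\Gamma(\nu_i+\lambda_j),$$
and setting $m=n$ is legitimate, since the hypothesis there is $n\ge m$. The one point to check is that the left-hand side then reduces to the integrand claimed here: with $m=n$ we have $\nu\in\C^n$, so $\Psi^n_{\nu;s}$ falls under the base case~(\ref{wgendef1}), not the recursive case~(\ref{wgendef2}) (which requires $\nu\in\C^{n+k}$ with $k\ge1$); hence $\Psi^n_{\nu;s}(x)=e^{-s/x_n}\Psi^n_\nu(x)$. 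Substituting this, and noting $\sum_{i=1}^n(\nu_i+\lambda_i)$ is already symmetric in both index sets, yields exactly the stated identity.

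Alternatively, a self-contained derivation can be read off by running the argument in the proof of Corollary~\ref{cor-bs-gen} directly on \emph{square} matrices $W\in(\R_{>0})^{n\times n}$. One starts from the Gamma-integral normalization of $\int_{(\R_{>0})^{n\times n}}\prod_{ij}w_{ij}^{-\nu_i-\lambda_j}\exp\!\big(-\sum_{i=1}^n s/w_{i,n-i+1}-\sum{}'\,1/w_{ij}\big)\prod_{ij}dw_{ij}/w_{ij}$, changes variables by the gRSK map $W\mapsto T=T(W)=(P,Q)$, which has Jacobian $\pm1$ in logarithmic variables by Theorem~\ref{vp}, rewrites $\prod_{ij}w_{ij}^{-\nu_i-\lambda_j}=P^{-\lambda}Q^{-\nu}$ by~(\ref{pq}) and the exponent as $-\E_s(T)$ by Theorem~\ref{bi}, uses the decomposition $\E_s(T)=\F_0(P)+\F_s(Q)$ (valid since $n\ge m$, here with equality), and applies Fubini over the common shape $x\in(\R_{>0})^n$. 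The two resulting pattern integrals are $\int_{\Pi^n(x)}P^{-\lambda}e^{-\F_0(P)}\,dP=\Psi^n_\lambda(x)$ and $\int_{\Pi^n(x)}Q^{-\nu}e^{-\F_s(Q)}\,dQ=\Psi^n_{\nu;s}(x)=e^{-s/x_n}\Psi^n_\nu(x)$, by the generating-function formula~(\ref{gf}) at height $h=n$ together with~(\ref{wgendef1}).

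I do not expect any genuine obstacle: all the content is already carried by Theorems~\ref{vp} and~\ref{bi} and Corollary~\ref{cor-bs-gen}, and the convergence is guaranteed by the hypothesis $\Re(\nu_i+\lambda_j)>0$. The only place requiring care is the bookkeeping in the degenerate height case $h=n$, i.e.\ confirming that the $k=0$ situation is governed by~(\ref{wgendef1}) rather than~(\ref{wgendef2}), and that the $s/z_{nn}$ term in~(\ref{fsp}) at $h=n$ matches the $e^{-s/x_n}$ prefactor under the pattern integral~(\ref{gf}). Once this is observed, the identity is immediate.
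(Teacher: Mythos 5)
Your proposal is correct and matches the paper exactly: the paper derives Corollary~\ref{bs1} precisely as the $m=n$ specialization of Corollary~\ref{cor-bs-gen} (introduced there with the words ``When $n=m$, (\ref{bs-gen}) becomes''), and the only point needing verification is the one you identify, namely that for $k=0$ the definition~(\ref{wgendef1}) gives $\Psi^n_{\nu;s}(x)=e^{-s/x_n}\Psi^n_\nu(x)$. Your alternative self-contained derivation is just the proof of Corollary~\ref{cor-bs-gen} run on square matrices, so it adds nothing beyond the paper's route but is equally valid.
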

Using \eqref{inv}, this is equivalent to the following integral identity for $GL(n,\R)$-Whittaker functions, 
due to Stade~\cite{stade}, see Theorem~\ref{thm-stade-psi} below.
\begin{cor}[Stade] \label{stade}
 Suppose $r>0$ and $\lambda,\nu\in\C^n$, where $\Re (\lambda_i+\nu_j)>0$ for all $i$ and $j$.  Then
$$\int_{(\R_{>0})^n } e^{-r x_1} \Psi^n_{-\nu}(x) \Psi^n_{-\lambda}(x)\prod_{i=1}^n \frac{dx_i}{x_i}
= r^{-\sum_{i=1}^n (\nu_i+\lambda_i)} \prod_{ij}\Gamma(\nu_i+\lambda_j).$$
\end{cor}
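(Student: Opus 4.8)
The plan is to deduce this directly from Corollary~\ref{bs1} by the change of variables $x_i\mapsto 1/x_{n-i+1}$, using the symmetry relation~\eqref{inv} for Whittaker functions. All of the substantive work — the volume-preserving property of Theorem~\ref{vp} and the fundamental identity of Theorem~\ref{bi} — has already gone into establishing Corollary~\ref{bs1}, so what remains is a purely formal manipulation.

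Concretely, I would start from the identity of Corollary~\ref{bs1},
$$\int_{(\R_{>0})^n} e^{-s/x_n}\,\Psi^n_\nu(x)\,\Psi^n_\lambda(x)\prod_{i=1}^n\frac{dx_i}{x_i} = s^{-\sum_{i=1}^n(\nu_i+\lambda_i)}\prod_{ij}\Gamma(\nu_i+\lambda_j),$$
valid for $\Re(\lambda_i+\nu_j)>0$, and substitute $y_i=1/x_{n-i+1}$. This is an involution of $(\R_{>0})^n$ onto itself; in logarithmic coordinates it is the sign flip $\log x\mapsto-\log x$ composed with the order-reversing permutation of coordinates, so its Jacobian has absolute value $1$ and the reference measure is preserved, $\prod_i dx_i/x_i=\prod_i dy_i/y_i$. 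Under the substitution one has $1/x_n=y_1$, hence $e^{-s/x_n}=e^{-sy_1}$, while~\eqref{inv} gives $\Psi^n_\nu(x)=\Psi^n_{-\nu}(y)$ and $\Psi^n_\lambda(x)=\Psi^n_{-\lambda}(y)$.

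Assembling these, the left-hand side of Corollary~\ref{bs1} becomes
$$\int_{(\R_{>0})^n} e^{-sy_1}\,\Psi^n_{-\nu}(y)\,\Psi^n_{-\lambda}(y)\prod_{i=1}^n\frac{dy_i}{y_i},$$
which, upon relabelling $y$ as $x$ and writing $r=s$, is exactly the left-hand side of the claimed identity; the right-hand side is unchanged and the hypothesis $\Re(\lambda_i+\nu_j)>0$ carries over verbatim. I do not expect any genuine obstacle here: the only points needing a moment's care are the invariance of $\prod_i dx_i/x_i$ under the coordinate inversion-and-reversal (immediate in logarithmic variables) and the fact that~\eqref{inv} applies as stated, which it does since it is valid for every $\lambda\in\C^n$. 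A fully self-contained alternative would be to rerun the computation in the proof of Corollary~\ref{cor-bs-gen} in the square case $n=m$ with the weight placed at the $(1,1)$ entry and the order of indices reversed, but the one-line change of variables from Corollary~\ref{bs1} is cleaner.
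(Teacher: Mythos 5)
Your proposal is correct and is exactly the paper's argument: the paper obtains Corollary~\ref{stade} from Corollary~\ref{bs1} precisely by the substitution $x_i\mapsto 1/x_{n-i+1}$ via the relation~\eqref{inv}, with the measure $\prod_i dx_i/x_i$ invariant under this involution. Nothing is missing.
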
 
Again, we note that this identity is proved in~\cite{stade} without assuming the condition
$\Re (\lambda_i+\nu_j)>0$ for all $i$ and $j$.  In this case, the integral is associated, via the Rankin-Selberg 
method, with Archimedean $L$-factors of automorphic $L$-functions on $GL(n,\R)\times GL(n,\R)$. 
\begin{cor}\label{L2}
Suppose $s>0$ and $\nu\in\C^n$ with $\Re\nu_i>0$ for each $i$.  Then, for each $m\le n$, the function 
$\Psi^m_{\nu;s}$ is in $L_2((\R_{>0})^m, \prod_{i=1}^m dx_i/x_i)$, and the function
$e^{-s x_1} \Psi^n_{-\nu}(x)$ is in $L_2((\R_{>0})^n, \prod_{i=1}^n dx_i/x_i)$.
\end{cor}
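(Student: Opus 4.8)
The plan is to deduce both assertions from the integral identity of Corollary~\ref{bs1} together with an elementary $L_2$-boundedness estimate for the operators $Q^m_\mu$. First I would reduce the second assertion to the first: by~\eqref{inv} with $\lambda=-\nu$ we have $\Psi^n_{-\nu}(x)=\Psi^n_\nu(x')$ with $x'_i=1/x_{n-i+1}$, and since $x'_n=1/x_1$ this gives $e^{-sx_1}\Psi^n_{-\nu}(x)=e^{-s/x'_n}\Psi^n_\nu(x')=\Psi^n_{\nu;s}(x')$ by~\eqref{wgendef1}. The map $x\mapsto x'$ is a measure-preserving involution of $\bigl((\R_{>0})^n,\prod_i dx_i/x_i\bigr)$, so $\|e^{-sx_1}\Psi^n_{-\nu}\|_2=\|\Psi^n_{\nu;s}\|_2$, and it suffices to treat $\Psi^m_{\nu;s}$, which I would do in two steps.

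Step one handles the case in which the index vector has length equal to the dimension: for $\mu\in\C^m$ with $\Re\mu_i>0$ I claim $\Psi^m_{\mu;s}\in L_2$. Here $\Psi^m_{\mu;s}(x)=e^{-s/x_m}\Psi^m_\mu(x)$ by~\eqref{wgendef1}, and since the integrand of the pattern formula~\eqref{gf} for $\Psi^m_\mu$ is positive and depends on $\mu$ only through $\prod_i\tau_i^{-\mu_i}$ with $\tau_i>0$, one has $\overline{\Psi^m_\mu(x)}=\Psi^m_{\bar\mu}(x)$ for $x\in(\R_{>0})^m$. Hence for $s>0$
$$\int_{(\R_{>0})^m}\bigl|\Psi^m_{\mu;s}(x)\bigr|^2\prod_i\frac{dx_i}{x_i}=\int_{(\R_{>0})^m}e^{-2s/x_m}\,\Psi^m_\mu(x)\,\Psi^m_{\bar\mu}(x)\prod_i\frac{dx_i}{x_i},$$
which by Corollary~\ref{bs1} (with $s$ replaced by $2s$ and $\lambda=\bar\mu$) equals $(2s)^{-2\sum_i\Re\mu_i}\prod_{ij}\Gamma(\mu_i+\bar\mu_j)<\infty$; finiteness holds because $\Re(\mu_i+\bar\mu_j)=\Re\mu_i+\Re\mu_j>0$.

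Step two passes from index length $m$ to a general length $n\ge m$, and its crux is the lemma that for $\Re\mu>0$ the integral operator $Q^m_\mu$ is bounded on $L_2\bigl((\R_{>0})^m,\prod_i dx_i/x_i\bigr)$, which I would establish by Schur's test with weight $1$. After the substitution $y_i=u_ix_i$ the ``row'' integral becomes $\int\prod_i u_i^{\Re\mu}\exp\!\bigl(-\sum_i u_i-\sum_{i=1}^{m-1}(x_{i+1}/x_i)/u_i\bigr)\prod_i du_i/u_i\le\Gamma(\Re\mu)^m$ uniformly in $x$, and after $x_i=y_i/v_i$ the ``column'' integral becomes $\int\prod_i v_i^{\Re\mu}\exp\!\bigl(-\sum_i v_i-\sum_{j=2}^{m}(y_j/y_{j-1})/v_j\bigr)\prod_i dv_i/v_i\le\Gamma(\Re\mu)^m$ uniformly in $y$, so that $\|Q^m_\mu\|_{2\to2}\le\Gamma(\Re\mu)^m$; both estimates need $\Re\mu>0$ only for the convergence of the single unconstrained $\Gamma$-integral. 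Since, by~\eqref{wgendef2}, $\Psi^m_{\nu;s}=Q^m_{\nu_n}\cdots Q^m_{\nu_{m+1}}\Psi^m_{\nu_1,\dots,\nu_m;s}$, with $\Psi^m_{\nu_1,\dots,\nu_m;s}\in L_2$ by step one (the first $m$ of the $\Re\nu_i$ are positive) and each $Q^m_{\nu_j}$ ($m<j\le n$) bounded on $L_2$, we conclude $\Psi^m_{\nu;s}\in L_2$. The only delicate bookkeeping is to check that the pointwise absolutely convergent iterated integral defining $\Psi^m_{\nu;s}$ coincides with the above composition of $L_2$-bounded operators applied to $\Psi^m_{\nu_1,\dots,\nu_m;s}$; this follows from the absolute convergence noted after~\eqref{wgendef2} and Fubini. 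I expect the Schur estimate for $Q^m_\mu$ to be the main (though still elementary) piece of work.
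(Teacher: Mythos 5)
Your proof is correct, but it takes a genuinely different route from the paper's. The paper proves the first claim by duality through the Plancherel theorem for Whittaker functions: it checks via Stirling's formula that $\hat\Psi^m_{\nu;s}(\lambda)=s^{-\sum_i(\nu_i+\lambda_i)}\prod_{ij}\Gamma(\nu_i+\lambda_j)$ lies in $L_2(\iota\R^m,s_m(\lambda)d\lambda)$, then pairs $\Psi^m_{\nu;s}$ against a dense class of $f$ whose transforms $\hat f$ are continuous and compactly supported, obtaining $\vert\langle\Psi^m_{\nu;s},f\rangle\vert\le\Vert\hat\Psi^m_{\nu;s}\Vert_2\,\Vert\hat f\Vert_2$ by Cauchy--Schwarz and concluding by density. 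You avoid Plancherel entirely: in the base case of $m$ indices you compute the $L_2$ norm exactly from Corollary~\ref{bs1} applied with the conjugate parameter $\bar\mu$ and $s$ replaced by $2s$ (the identity $\overline{\Psi^m_\mu}=\Psi^m_{\bar\mu}$ is legitimate by the positivity of the pattern integrand in \eqref{gf}), and you promote this to $n>m$ by proving $L_2$-boundedness of the Baxter operators $Q^m_\mu$ for $\Re\mu>0$ via Schur's test --- a lemma the paper does not use. There is no circularity, since Corollaries~\ref{cor-bs-gen} and~\ref{bs1} are established independently of Corollary~\ref{L2}. Your argument is more elementary and yields more information (the exact value of $\Vert\Psi^m_{\mu;s}\Vert_2$ in the base case and the explicit bound $\Vert Q^m_\mu\Vert_{2\to2}\le\Gamma(\Re\mu)^m$); the paper's argument treats all $m\le n$ in a single stroke via Corollary~\ref{cor-bs-gen} but leans on the nontrivial Plancherel theorem. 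The reduction of the second claim to the first via \eqref{inv} is identical in both. The only point you should spell out is the a.e.\ identification of the operator composition $Q^m_{\nu_n}\cdots Q^m_{\nu_{m+1}}\Psi^m_{\nu_1,\dots,\nu_m;s}$ with the pointwise iterated integral; as you note, this follows from the absolute convergence recorded after \eqref{wgendef2} (equivalently, from applying your Schur bounds to the kernels and the function with $\nu$ replaced by $\Re\nu$) together with Fubini.
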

\begin{proof}
The first claim follows from Corollary \ref{cor-bs-gen} and the Plancherel theorem, as follows.
We first note that, under the above hypotheses,
$$\hat {\Psi}^m_{\nu;s} (\lambda) =
s^{-\sum_{i=1}^m (\nu_i+\lambda_i)} \prod_{ij}\Gamma(\nu_i+\lambda_j) 
\in L_2(\iota\R^m, s_m(\lambda)d\lambda).$$
This is easily verified using Stirling's approximation
$$\lim_{b\to\infty}|\Gamma(a+\iota b)| e^{\frac{\pi}{2}|b|}|b|^{\frac{1}{2}-a}=\sqrt{2\pi},
\qquad a,b \in \mathbb{R} .$$
Now, suppose $f\in L_2((\R_{>0})^m, \prod_{i=1}^m dx_i/x_i)$ such that $\hat f$ is continuous
and compactly supported on $\iota\R^m$.  By the Plancherel theorem, such functions are dense
in $L_2((\R_{>0})^m, \prod_{i=1}^m dx_i/x_i)$ and, moreover, satisfy
\begin{equation}\label{invers}
f(x)=\int_{\iota\R^m} \hat f(\lambda) \Psi^m_{\lambda}(x) s_m(\lambda) d\lambda
\end{equation}
almost everwhere.  Indeed, for any $g\in L_2((\R_{>0})^m, \prod_{i=1}^m dx_i/x_i)$ which is continuous
and compactly supported we have, by Fubini's theorem,
\begin{align*}
\int_{(\R_{>0})^m} \left( \int_{\iota\R^m} \hat f(\lambda) \Psi^m_{\lambda}(x) s_m(\lambda) d\lambda
\right) \overline{g(x)} \prod_{i=1}^m \frac{dx_i}{x_i}
&=\int_{\iota\R^m} \hat f(\lambda) \overline{\hat{g}(\lambda)} s_m(\lambda) d\lambda\\
&= \int_{(\R_{>0})^m} f(x) \overline{g(x)} \prod_{i=1}^m \frac{dx_i}{x_i} .
\end{align*}
This implies (\ref{invers}).  Now, by Corollary \ref{cor-bs-gen},
$$\int_{(\R_{>0})^m} |\Psi^m_{\nu;s}(x)\Psi^m_{\lambda}(x)| \prod_{i=1}^m \frac{dx_i}{x_i}
\le \int_{(\R_{>0})^m} \Psi^m_{\Re\nu;s}(x)\Psi^m_0(x) \prod_{i=1}^m \frac{dx_i}{x_i} <\infty .$$
It follows that, for $f\in L_2((\R_{>0})^m, \prod_{i=1}^m dx_i/x_i)$ such that $\hat f$ is continuous
and compactly supported on $\iota\R^m$, the integral
$$\int_{(\R_{>0})^m}\int_{\iota\R^m}  \Psi^m_{\nu;s}(x) \overline{\hat f(\lambda)} \Psi^m_{\lambda}(x) s_m(\lambda) d\lambda \frac{dx_i}{x_i}$$
is absolutely convergent, and so, by Fubini's theorem,
\begin{align*}
\int_{(\R_{>0})^m} \Psi^m_{\nu;s}(x) \overline{f(x)} \prod_{i=1}^m\frac{dx_i}{x_i}
&= \int_{(\R_{>0})^m} \Psi^m_{\nu;s}(x) \left( \int_{\iota\R^m} \overline{\hat f(\lambda)} \Psi^m_{\lambda}(x) s_m(\lambda) d\lambda
\right) \prod_{i=1}^m \frac{dx_i}{x_i}\\
&=\int_{\iota\R^m}  \hat\Psi^m_{\nu;s}(\lambda) \overline{\hat f(\lambda)} s_m(\lambda) d\lambda .
\end{align*}
Hence, using the Cauchy-Schwarz inequality, 
\begin{align*}
& \left\vert \int_{(\R_{>0})^m}  \Psi^m_{\nu;s}(x) \overline{f(x)} \prod_{i=1}^m \frac{dx_i}{x_i} \right\vert =
\left\vert \int_{\iota\R^m}  \hat\Psi^m_{\nu;s}(\lambda) \overline{\hat f(\lambda)} s_m(\lambda) d\lambda \right\vert \\
&\le \left( \int_{\iota\R^m} | \hat\Psi^m_{\nu;s}(\lambda)|^2 s_m(\lambda) d\lambda\right)^{1/2}
\left( \int_{\iota\R^m} |\hat f(\lambda)|^2 s_m(\lambda) d\lambda \right)^{1/2} .
\end{align*}
This proves the first claim.  The second claim follows from the first, letting $m=n$ and using (\ref{inv}).
\end{proof}

Consider the probability measure on input matrices $W$ defined by 
$$\tilde \nu_{\hat\theta,\theta;s}(dw)= Z_{\hat\theta,\theta;s}^{-1} \nu_{\hat\theta,\theta;s}(dw)$$
where
$$Z_{\hat\theta,\theta;s} = s^{-\sum_{i=1}^p (\hat\theta_i+\theta_i)} \prod_{ij}\Gamma(\hat \theta_i+\theta_j).$$
The following result was obtained in~\cite{cosz}.
\begin{cor}
Suppose $\hat\theta_i+\theta_j>0$ for each $i$ and $j$, and  (w.l.o.g.) that $n\ge m$, $\theta_i<0$ for each $i$ and $\hat\theta_j>0$ for each $j$.
Then, the Laplace transform of the law $\tilde \nu_{\hat\theta,\theta;s}\circ t_{nm}^{-1}$
of the polymer partition function $t_{nm}$ under $\tilde \nu_{\hat\theta,\theta;s}$ 
is given by
$$\int e^{-r t_{nm}} \tilde \nu_{\hat\theta,\theta;s}(dw)
= \int_{\iota\R^m} (rs)^{\sum_{i=1}^m (\theta_i-\lambda_i)} \prod_{ij}\Gamma(\lambda_i-\theta_j)
\prod_{ij}\frac{\Gamma(\hat \theta_i+\lambda_j)}{\Gamma(\hat \theta_i+\theta_j)}
 s_n(\lambda) d\lambda .$$
\end{cor}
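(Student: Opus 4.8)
The plan is to turn the Laplace transform into an integral of a product of two Whittaker functions over $(\R_{>0})^m$ and then evaluate that integral by the Whittaker--Plancherel theorem, reading off the two relevant transforms from the integral identities already established in Corollaries~\ref{cor-bs-gen} and~\ref{stade}. First I would note that, by transposing $W$ if necessary (which interchanges the roles of $\theta$ and $\hat\theta$), we may assume $n\ge m$, so $p=m$; since $t_{nm}$ is the first coordinate of the shape vector $\sigma(W)$ of \eqref{sigma}, Corollary~\ref{pf} gives
$$\int e^{-rt_{nm}}\,\tilde\nu_{\hat\theta,\theta;s}(dw)
= Z_{\hat\theta,\theta;s}^{-1}\int_{(\R_{>0})^m} e^{-rx_1}\,\Psi^m_\theta(x)\,\Psi^m_{\hat\theta;s}(x)\,\prod_{i=1}^m\frac{dx_i}{x_i}.$$
It then remains to evaluate this last integral under the standing assumptions $\theta_i<0<\hat\theta_j$.

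Set $g(x)=e^{-rx_1}\Psi^m_\theta(x)=e^{-rx_1}\Psi^m_{-(-\theta)}(x)$ and $h(x)=\Psi^m_{\hat\theta;s}(x)$; these are real-valued, and by Corollary~\ref{L2} they lie in $L_2\big((\R_{>0})^m,\prod_{i=1}^m dx_i/x_i\big)$ precisely because $\theta_i<0$ (for $g$) and $\hat\theta_j>0$ (for $h$). The integral transform $f\mapsto\hat f$ of the Plancherel theorem is an isometry, so
$$\int_{(\R_{>0})^m} g(x)\,h(x)\,\prod_{i=1}^m\frac{dx_i}{x_i}
= \int_{\iota\R^m}\hat g(\lambda)\,\overline{\hat h(\lambda)}\,s_m(\lambda)\,d\lambda .$$
The two transforms are exactly the integrals already computed in this section. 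Corollary~\ref{stade} (Stade's identity), applied with parameter $-\theta$ and transform variable $-\lambda$, gives $\hat g(\lambda)=r^{\sum_i(\theta_i+\lambda_i)}\prod_{ij}\Gamma(-\theta_i-\lambda_j)$. Corollary~\ref{cor-bs-gen}, applied with $\nu=\hat\theta$, gives $\hat h(\lambda)=s^{-\sum_i(\hat\theta_i+\lambda_i)}\prod_{ij}\Gamma(\hat\theta_i+\lambda_j)$; since $h$ is real and $\overline{\Psi^m_\lambda(x)}=\Psi^m_{-\lambda}(x)$ on $\iota\R^m$, this yields $\overline{\hat h(\lambda)}=s^{-\sum_i(\hat\theta_i-\lambda_i)}\prod_{ij}\Gamma(\hat\theta_i-\lambda_j)$. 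On the contour $\iota\R^m$ the convergence hypotheses of these two corollaries reduce to exactly $\theta_i<0$ and $\hat\theta_j>0$.

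Finally I would substitute these expressions, change variables $\lambda\mapsto-\lambda$ (under which $s_m(\lambda)\,d\lambda$ is invariant, since only the differences $\lambda_i-\lambda_j$ enter the Sklyanin density), and multiply through by $Z_{\hat\theta,\theta;s}^{-1}=s^{\sum_{i=1}^m(\hat\theta_i+\theta_i)}\prod_{ij}\Gamma(\hat\theta_i+\theta_j)^{-1}$. Using $\sum_{i=1}^m(\hat\theta_i+\theta_i)-\sum_{i=1}^m(\hat\theta_i+\lambda_i)=\sum_{i=1}^m(\theta_i-\lambda_i)$, the powers of $r$ and $s$ collapse into $(rs)^{\sum_{i=1}^m(\theta_i-\lambda_i)}$, and after relabelling indices in $\prod_{ij}\Gamma(\lambda_i-\theta_j)$ the Gamma factors reassemble into exactly the claimed integrand, integrated against the Sklyanin measure on $\iota\R^m$. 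The one genuinely delicate step is the rigorous justification of the Plancherel identity $\int gh=\int\hat g\,\overline{\hat h}\,s_m$: this requires not only that $g,h\in L_2$ with the transforms above (the content of Corollary~\ref{L2}, via Stirling's approximation), but also a Fubini argument identical to the one in the proof of Corollary~\ref{L2} --- approximate one factor by functions with continuous, compactly supported transform and use the absolute-convergence bound $\int\Psi^m_{\Re\hat\theta;s}(x)\Psi^m_0(x)\prod_{i=1}^m dx_i/x_i<\infty$ furnished by Corollary~\ref{cor-bs-gen}. Everything downstream of that is routine manipulation of exponents and Gamma functions.
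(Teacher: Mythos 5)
Your proposal is correct and follows essentially the same route as the paper: reduce to the integral $Z^{-1}\int e^{-rx_1}\Psi^m_\theta\Psi^m_{\hat\theta;s}\prod dx_i/x_i$ via Corollary~\ref{pf}, check $L_2$ membership via Corollary~\ref{L2}, and evaluate by Plancherel using Corollaries~\ref{cor-bs-gen} and~\ref{stade} for the two transforms. The paper's proof is only a three-line citation of these same results, so your more detailed bookkeeping (including the sign change $\lambda\mapsto-\lambda$ and the caveat about identifying the pointwise integral transforms with the $L_2$ transforms) is a faithful, if fuller, rendering of the intended argument.
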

\begin{proof}
By Corollary~\ref{pf}, 
$$ \int  e^{-r t_{nm}} \tilde \nu_{\hat\theta,\theta;s}(dw)
= Z_{\hat\theta,\theta;s}^{-1} \int_{(\R_{>0})^m } 
e^{-rx_1} \Psi^m_\theta(x) \Psi^m_{\hat\theta;s}(x)\prod_{i=1}^m \frac{dx_i}{x_i} .$$
By Corollary~\ref{L2}, the functions $e^{-rx_1} \Psi^m_\theta(x)$ and $\Psi^m_{\hat\theta;s}(x)$
are in the space $L_2((\R_{>0})^m, \prod_{i=1}^m dx_i/x_i)$.  
The result follows, by Corollaries~\ref{bs1}, ~\ref{stade} and the Plancherel theorem.
\end{proof}

\section{Equivalence of old and new description of geometric RSK}\label{nvo}

We explain here the equivalence of the Noumi-Yamada row insertion
construction \cite{ny}  and the  definition of geometric RSK 
given in Section \ref{gRSKsec}. 
   The input weight matrix $(w_{ij})$ is $n\times m$, where $m$ is fixed and 
$n$ represents time. 
After $n$ time steps the Noumi-Yamada process gives two patterns $P=\{z_{k\ell}\}$ and 
$Q=\{z'_{ij}\}$.  
  $P$ has height $m$, $Q$ has height $n$,   and their
  common shape vector $z_{m\,\centerdot}=z'_{n\,\centerdot}$ is of length $p=m\wedge n$.  
The rows of $Q$ indexed by $s=1,\dotsc,n$ 
 from top to bottom are the successive shape vectors  (bottom rows) 
$z_{m\,\centerdot}(s)= (z_{m,\ell}(s))_{1\le\ell\le m\wedge s}$ of the temporal 
evolution $\{z(s): 1\le s\le n\}$ of the $P$ pattern.    Thus as in classic RSK
the $Q$ pattern serves as a recording pattern.  

 The Noumi-Yamada process begins with an empty pattern at time $n=0$.  
 Then the following step is repeated for $n=1,2,3,\dotsc$.  

\medskip

{\sl Noumi-Yamada construction for time step $n-1\to n$.}  Let $z=z(n-1)$
denote the $P$ pattern obtained after $n-1$ steps.  
Insertion of   row $w_{n\,\centerdot}$ of weights into $z$ 
  transforms 
  $z$ into $\check z=z(n)$ as follows.  
 
(i)  If $n\ge m+1$ (in other words, the triangle was filled by time $n-1$), then 
\be\begin{aligned}\label{NYalg1}
a_{k,1}&=w_{n,k}   &\textrm{for } 1\le k\leq m\\
a_{k+1,\ell+1}&=a_{k+1,\ell} \frac{z_{k+1,\ell} \check z_{k,\ell}}{\check z_{k+1,\ell} z_{k,\ell}}
&\textrm{for } 1\le \ell\leq k<m\\
\check z_{k,\ell}&= a_{k,\ell}(z_{k,\ell}+\check z_{k-1,\ell})&\textrm{for } 1\le \ell<k\leq m\\
\check z_{k,k}&= a_{k,k}z_{k,k} &\textrm{for } 1\le k\leq m.
\end{aligned}\ee

(ii) If $n\le m$, then the equations above define $\check z_{k,\ell}$ for $1\le\ell\le k\wedge (n-1)$.  Set
    \be  \check z_{k,n}=a_{n,n}\dotsm a_{k,n} 
  \quad\text{ for $k=n,\dotsc,m$,} \label{NYalg2}\ee
    while 
$\check z_{k,\ell}$ for $\ell\ge n+1$ remain undefined.  

\medskip

\begin{prop}  Let $(w_{ij})$ be an $n\times m$ weight matrix and 
  $T=T(W)$   defined by \eqref{defT}.  Then the output 
$T$ is equivalent to the patterns $(P,Q)$  obtained from $n$ steps of 
 the Noumi-Yamada
  evolution,  through these equations: 
 \begin{align}
\label{tz1}  \text{$P$ pattern:} \ \  z_{k\ell}&= t_{n-\ell+1, \,k-\ell+1} , \quad
1\le\ell\le k\wedge n, \, 1\le k\le m \\[2pt]
\label{tz2} \text{$Q$ pattern:} \ \  z'_{s\ell}&= t_{s-\ell+1,\, m-\ell+1} , \quad
1\le\ell\le m\wedge s, \, 1\le s\le n. 
\end{align}  
\label{tzprop} \end{prop}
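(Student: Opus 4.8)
The plan is to prove Proposition~\ref{tzprop} by induction on $n$, using the same recursive structure of $T$ that underlies the proof of Theorem~\ref{bi}. Writing $S=T^{n-1,m}(W_{n-1,m})$ for the output of gRSK applied to the top $n-1$ rows of $W$, one has $T=T^{n,m}(W)=R_n(\tilde S)$, where $\tilde S$ is the $n\times m$ matrix obtained by placing the new row $(w_{n1},\dots,w_{nm})$ below $S$. By the induction hypothesis, $S$ encodes the pair $\bigl(z(n-1),Q(n-1)\bigr)$ produced by $n-1$ Noumi--Yamada steps through equations \eqref{tz1}--\eqref{tz2} with $n$ replaced by $n-1$. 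The base case $n=1$ is a direct check: $R_1$ simply replaces the single row by its sequence of partial products, which is exactly the output of the Noumi--Yamada algorithm at the first time step (equation \eqref{NYalg2} with $n=1$).

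For the inductive step I would first dispose of the $Q$ pattern. Inspecting \eqref{rodef}, each factor $\rho^n_j$ of $R_n=\rho^n_m\circ\cdots\circ\rho^n_1$ alters only the matrix entries lying on the anti-diagonal through $(n,j)$; hence $R_n$ leaves an entry $(a,b)$ (with $a\le n$) unchanged whenever $b>a+m-n$. The entry $t_{s-\ell+1,\,m-\ell+1}$ occurring in \eqref{tz2} has $b=m-\ell+1>(s-\ell+1)+m-n$ precisely when $s<n$, so for $s=1,\dots,n-1$ these entries retain their values in $S$, which by the induction hypothesis give the first $n-1$ rows of $Q$. The last row $s=n$ of $Q$ is, by definition, the new shape vector $z_{m\,\centerdot}(n)=\mathrm{sh}\,z(n)$, and this coincides with the bottom row of the $P$ pattern read off from $T$ via \eqref{tz1}; so \eqref{tz2} follows as soon as \eqref{tz1} is established.

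It thus remains to show that the $P$ pattern extracted from $T$ via \eqref{tz1} is exactly $z(n)=\check z$, the array obtained from $z(n-1)$ by inserting the row $w_{n\,\centerdot}$ according to \eqref{NYalg1}--\eqref{NYalg2}. The key point is that, under \eqref{tz1}, the factor $\rho^n_k$ acts on precisely the $k$-th row $z_{k,1},\dots,z_{k,k}$ of the $P$ pattern: expanding (in the case $n\ge k$) $\rho^n_k=l_{n-k+1,1}\circ l_{n-k+2,2}\circ\cdots\circ l_{n-1,k-1}\circ l_{nk}$, the $q$-th move to be applied, $l_{n-q+1,\,k-q+1}$ ($q=1,\dots,k$), has the entry corresponding to $z_{k,q}$ in the $d$-position of \eqref{abcd} and the entry corresponding to $z_{k,q+1}$ in the $a$-position. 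Hence $R_n$ rebuilds the rows of the $P$ pattern in the order $k=1,2,\dots,m$, exactly as in \eqref{NYalg1}. One then checks, by tracking which matrix positions have already been modified, that at the moment $l_{n-q+1,\,k-q+1}$ is applied its $d$-entry equals the Noumi--Yamada auxiliary quantity $a_{k,q}$ (with $a_{k,1}=w_{nk}$ when $q=1$), its upper neighbour $b$ equals the old value $z_{k,q}$, and its left neighbour $c$ equals the already-updated value $\check z_{k-1,q}$. The explicit formula \eqref{abcd} then yields $d(b+c)=a_{k,q}(z_{k,q}+\check z_{k-1,q})=\check z_{k,q}$ in the $d$-slot and $bc/(ab+ac)=a_{k,q}\,z_{k,q}\,\check z_{k-1,q}/(\check z_{k,q}\,z_{k-1,q})=a_{k,q+1}$ in the $a$-slot, which are precisely the recursions in \eqref{NYalg1}; the terminal special move $l_{n-k+1,1}$ of $\rho^n_k$ produces $\check z_{k,k}=a_{k,k}z_{k,k}$, and in the case $n\le m$ the factors $\rho^n_k$ with $k>n$ (which take the other form in \eqref{rodef} and terminate with the special move $l_{1,\,k-n+1}$) reproduce the column-extension \eqref{NYalg2} in the same way. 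This closes the induction.

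I expect the main obstacle to be the bookkeeping in this last step: one must carry along the dictionary between matrix indices $(i,j)$ and pattern coordinates $(k,\ell)$, and at each stage know exactly which entries have already been altered by $\rho^n_1,\dots,\rho^n_{k-1}$ and by the earlier moves within $\rho^n_k$, so that each neighbour appearing in \eqref{abcd} can be identified either with an entry of the old pattern $z$ or with a freshly computed entry of $\check z$. The boundary cases $k=1$, $k=m$, $q=1$, $q=k$, and the distinction between $n\ge m+1$ and $n\le m$, each need a little care, but the computations involved are all of the elementary type already carried out in the example following \eqref{defT}.
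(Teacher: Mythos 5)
Your proposal is correct and is essentially the paper's own proof: the same outer induction on $n$, the same disposal of the $Q$ pattern by noting that $R_n$ does not alter entries $\tilde t_{ij}$ with $j-i>m-n$ in rows $1,\dots,n-1$, and the same identification of each local move \eqref{abcd} with one step of the Noumi--Yamada recursion, with the $a$-slot carrying the auxiliary quantity $a_{k,\ell}$ and the $d$-slot producing $\check z_{k,\ell}=a_{k,\ell}(z_{k,\ell}+\check z_{k-1,\ell})$. The only (cosmetic) difference is the order of the inner loop: you execute $R_n$ through the anti-diagonal decomposition $\rho^n_m\circ\cdots\circ\rho^n_1$, rebuilding the $P$ pattern one pattern-row $k$ at a time, whereas the paper executes $R_n$ through $\pi^{(m-n)\vee 0+1}_{(n-m)\vee 0 +1}\circ\cdots\circ\pi^{m-1}_{n-1}\circ\pi^m_n$, rebuilding it one matrix row (equivalently one pattern-column $\ell$) at a time from the bottom up; the two orderings agree by commutativity of distant local moves, and the individual verifications are identical.
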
 

\smallskip

Note in particular the common shape vector 
$z_{m\,\centerdot}=z'_{n\,\centerdot}=(t_{n-\ell+1,m-\ell+1})_{1\le \ell\le p}$.
Here is an illustration for $n\times m=3\times 6$. 
\be
T=\begin{bmatrix}   
z_{33} &z_{43} &z_{53} &z_{63}=z'_{33} &z'_{22} &z'_{11} \\
z_{22} &z_{32} &z_{42} &z_{52} &z_{62}=z'_{32} &z'_{21} \\
z_{11} &z_{21} &z_{31} &z_{41} &z_{51} &z_{61}=z'_{31} 
\end{bmatrix}   \\[8pt]
\label{T(W)Z}\ee

\begin{proof}[Proof of Proposition \ref{tzprop}]  We keep $m$ fixed and do induction on $n$.  In the case $n=1$,
the  $m$-vector $\check z_{\centerdot\,1}$ described by equation \eqref{NYalg2} is
the same
as that obtained by applying $R_1=\pi^m_1=l_{1m}\circ\dotsm\circ l_{11} $
to the top row $w_{1\,\centerdot}$ of the weight matrix.  

\smallskip

Suppose the statement is true for $T^{n-1,m}$. Add the $n$th weight row  $w_{n\,\centerdot}$
to $T^{n-1,m}$ and call the resulting  $n\times m$ matrix 
$\wt T^{n,m}=\big(\begin{smallmatrix} T^{n-1,m} \\ w_{n\,\centerdot} \end{smallmatrix}\bigr)$.  
   Then $T^{n,m}=R_n(\wt T^{n,m})$. 
From the definition of $R_n$ we see that on row $i\in\{1,\dotsc, n-1\}$ it alters
only elements $\tilde t_{ij}$ for $j-i\le m-n$.  Consequently after the application
of $R_n$, the induction assumption implies 
that \eqref{tz2} remains in force for $1\le s\le n-1$.  It only remains to 
check that \eqref{tz1} holds after the application
of $R_n$.  

Again we do induction, starting from the bottom row of $T^{n,m}$ and moving
up row by row. This corresponds to executing 
$R_n= \pi^{(m-n)\vee 0+1}_{(n-m)\vee 0+1}\circ\dotsm \circ \pi^{m-1}_{n-1}\circ \pi^m_n$
step by step.  

Before applying $\pi^m_n$, the two bottom rows of $\wt T^{n,m}$ are
\[
\wt T^{n,m} =\begin{bmatrix}  
&\dotsm  &\dotsm  &  \\
z_{11} &z_{21} &\dotsm  &z_{m1} \\
w_{n1} &w_{n2}  &\dotsm   &w_{nm}  
\end{bmatrix} 
=\begin{bmatrix}  
&\dotsm  &\dotsm  &  \\
z_{11} &z_{21} &\dotsm  &z_{m1} \\
a_{11} &a_{21}  &\dotsm   &a_{m1}  
\end{bmatrix} 
\]
where we used the first row of \eqref{NYalg1}. 
Apply $\pi^m_n=l_{nm}\circ l_{n,m-1}\circ\dotsm\circ l_{n1}$. Only   the 
bottom two rows are impacted.  Use the notation from \eqref{NYalg1}.  
\begin{align*}
&\begin{bmatrix}   z_{11} &z_{21} &z_{31}  &\dotsm  &z_{m1} \\
a_{11} &a_{21} &a_{31} &\dotsm   &a_{m1}  \end{bmatrix} 
\quad \stackrel{l_{n1}}{\longmapsto} \quad
\begin{bmatrix}    z_{11} &z_{21} &z_{31} &\dotsm  &z_{m1} \\
 \check z_{11} &a_{21} &a_{31} &\dotsm   &a_{m1}   \end{bmatrix}   \\[4pt]
 \stackrel{l_{n2}}{\longmapsto} \quad
&\begin{bmatrix}    a_{22} &z_{21} &z_{31} &\dotsm  &z_{m1} \\
 \check z_{11} &\check z_{21} &a_{31} &\dotsm   &a_{m1}   \end{bmatrix} 
 \quad  \stackrel{l_{n3}}{\longmapsto} \quad
\begin{bmatrix}    a_{22} &a_{32} &z_{31} &\dotsm  &z_{m1} \\
 \check z_{11} &\check z_{21} &\check z_{31} &\dotsm   &a_{m1}   \end{bmatrix} \\[4pt]
 \stackrel{l_{n4}}{\longmapsto} \quad &\dotsm \quad  \stackrel{l_{nm}}{\longmapsto} 
 \quad
 \begin{bmatrix}    a_{22} &a_{32} &a_{42} &\dotsm &a_{m2}  &z_{m1} \\
 \check z_{11} &\check z_{21} &\check z_{31} &\dotsm   &\check z_{m-1,1}  &\check z_{m1}   \end{bmatrix} 
\end{align*} 
Now the bottom row of $T^{n,m}$ is in place.  
Note that the transformations above left in place $z_{m1}=z'_{n1}$ as
they should, for this entry is in accordance with \eqref{tz2}. 

Next, an application of 
$\pi^{m-1}_{n-1}=l_{n-1,m-1}\circ l_{n-1,m-2}\circ\dotsm\circ l_{n-1,1}$
transforms rows $n-2$ and $n-1$ in this manner: 
 \begin{align*}
 \begin{bmatrix}  
 z_{22} &z_{32} &z_{42}  &\dotsm &z_{m-1,2}  & z'_{n2} &z'_{n-1,1} \\
   a_{22} &a_{32} &a_{42} &\dotsm &a_{m-1,2}  &a_{m2}  &z'_{n1} \\
 \check z_{11} &\check z_{21} &\check z_{31} &\dotsm  &\check z_{m-2,1}    &\check z_{m-1,1}  &\check z_{m1}   
 \end{bmatrix}  \\[6pt]
 \stackrel{\pi^{m-1}_{n-1}}{\longmapsto} \quad
  \begin{bmatrix}  
 a_{33} &a_{43} &a_{53}  &\dotsm &a_{m-2,3} & z'_{n2} &z'_{n-1,1} \\
   \check z_{22} &\check z_{32} &\check z_{42} &\dotsm &\check z_{m-1,2} &\check z_{m2}  &z'_{n1} \\
 \check z_{11} &\check z_{21} &\check z_{31} &\dotsm  &\check z_{m-2,1}   &\check z_{m-1,1}  &\check z_{m1}   \\[2pt]
 \end{bmatrix} 
\end{align*} 
The  bottom two  rows of $T^{n,m}$ are in place.  These steps  continue 
 until we arrive at $T^{n,m}$.  
\end{proof} 

\section{Symmetric input matrix}\label{sym-sec}

As it is needed in the following, we will write $R_i^{n,m}$ and $T=T^{n,m}$ for the mappings defined 
in \eqref{defR}--\eqref{defT}, and note the following recursive structure.  Let $W=(w_{ij})\in (\R_{>0})^{n\times m}$
and write $W_{k,m}=(w_{ij},\ 1\le i\le k,\ 1\le j\le m)$.  Recall that
$$T^{n,m}=R^{n,m}_n\circ R^{n,m}_{n-1} \circ \cdots \circ R^{n,m}_1.$$ 
Now, for each $i\le n$, the mapping $R^{n,m}_i$ acts only on the first $i$ rows of $W$ and leaves the 
remaining rows of $W$ unchanged.  In fact, for each $i\le k\le n$, we have
$$R^{n,m}_i (W) = \begin{pmatrix} R^{k,m}_i(W_{k,m}) \\ W_{k,m}^c \end{pmatrix} ,$$
where $W_{k,m}^c=(w_{ij},\ k+1\le i\le n,\ 1\le j\le m)$.  This property is immediate from the definitions.
This gives the basic recursion
\begin{equation}\label{rec}
T^{n,m}(W)=R_n^{n,m} \begin{pmatrix} T^{n-1,m}(W_{n-1,m}) \\ w_{n1}\ \ldots\ w_{nm} \end{pmatrix} .
\end{equation}
Recall that 
\begin{equation}\label{tran}
T^{m,n}(W^t)=[T^{n,m}(W)]^t.
\end{equation}  
In particular, if $n=m$ and $W$ is symmetric, then $T^{n,n}(W)$ is also symmetric.
\begin{lem}\label{lem-sym}
Suppose that $n=m$ and $W$ is symmetric.  

{\rm (a)} The following recursion holds:
\begin{equation}\label{recs}
T^{n,n}(W) = R^{n,n}_n \begin{pmatrix} \left[ R^{n,n-1}_n \begin{pmatrix} T^{n-1,n-1}(W_{n-1,n-1}) \\ 
w_{1n}\ \ldots\ w_{n-1,n} \end{pmatrix} \right]^t \\ w_{1n}\ \ldots\ w_{nn} \end{pmatrix} .
\end{equation}
Moreover, if we denote by $(s_{ij})$ the elements of the $(n-1)\times n$ matrix
\be S=\left[ R^{n,n-1}_n \begin{pmatrix} T^{n-1,n-1}(W_{n-1,n-1})\\ w_{1n}\ \ldots\ w_{n-1,n} \end{pmatrix} \right]^t \label{defS} \ee
and by $(t_{ij})$ the elements of $T^{n,n}(W)$, then 
\begin{equation}
\begin{array}{ll}
 t_{ij}=s_{ij} & \mbox{ for }1\le i<j\le n  \\
 t_{11}=s_{12}/2s_{11} & \\
 t_{ii}=s_{i,i+1}s_{i-1,i}/s_{ii} & \mbox{ for } 2\le i\le n-1\\
 t_{nn}=2s_{n-1,n}w_{nn}. &  
 \end{array}
\label{symm4}\end{equation}

{\rm (b)} For $n\ge 1$ we have this identity:  
\be
 4^{\fl{n/2}}  \prod_{i=1}^n w_{ii} 
\; = \; \frac{\displaystyle\prod_{j=0}^{\fl{\frac{n-1}2}} t_{n-2j, \,n-2j}}
 {\displaystyle\prod_{j=0}^{\fl{\frac{n-2}2}} t_{n-1-2j,\, n-1-2j}}\;=\; \frac{\displaystyle\prod_{\text{$i$ odd}} z_{ni}}{\displaystyle\prod_{\text{$i$ even}} z_{ni}}. 
\label{symm5}\ee

\end{lem}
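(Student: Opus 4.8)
The plan is to establish, in order, the recursion \eqref{recs}, the explicit formulas \eqref{symm4}, and the identity \eqref{symm5}. For \eqref{recs} I would iterate the basic recursion \eqref{rec}: it gives $T^{n,n}(W)=R^{n,n}_n\left(\begin{smallmatrix} T^{n-1,n}(W_{n-1,n})\\ w_{n1}\ \ldots\ w_{nn}\end{smallmatrix}\right)$, and then the transpose identity \eqref{tran} together with $W=W^t$ turns the inner block into $T^{n-1,n}(W_{n-1,n})=[T^{n,n-1}((W_{n-1,n})^t)]^t=[T^{n,n-1}(W_{n,n-1})]^t$, since $(W_{n-1,n})^t=W_{n,n-1}$ when $W$ is symmetric. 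Applying \eqref{rec} once more to $T^{n,n-1}(W_{n,n-1})$ and using $w_{nj}=w_{jn}$ produces \eqref{recs}, and identifies the matrix $S$ of \eqref{defS} with $T^{n-1,n}(W_{n-1,n})$.

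For \eqref{symm4} I would track the action of $R^{n,n}_n$ on $M:=\left(\begin{smallmatrix} S\\ w_{1n}\ \ldots\ w_{nn}\end{smallmatrix}\right)$ via the decomposition $R^{n,n}_n=\rho^n_n\circ\cdots\circ\rho^n_1$ of \eqref{rodef} (equivalently, via the Noumi--Yamada insertion of the bottom row, Section \ref{nvo}). For $j<n$ the map $\rho^n_j$ is a composition of local moves $l_{ab}$ with $a>b$, hence alters only strictly sub-diagonal entries; the last map $\rho^n_n=l_{11}\circ l_{22}\circ\cdots\circ l_{nn}$ (with $l_{nn}$ applied first) alters only the main diagonal. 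Consequently $R^{n,n}_n$ never changes an above-diagonal entry, giving $t_{ij}=s_{ij}$ for $i<j$; and since $W=W^t$ forces $T=T^t$, also $t_{ij}=s_{ji}$ for $i>j$, so in particular $t_{i,i-1}=s_{i-1,i}$. The key observation is that just before $\rho^n_n$ acts, the matrix has diagonal $(s_{11},\dots,s_{n-1,n-1},w_{nn})$, each sub-diagonal entry $x_{i,i-1}$ has already reached its final value $s_{i-1,i}$ (these sit on the staircase of $\rho^n_{n-1}$ and are untouched afterward), and each super-diagonal entry $x_{i-1,i}$ is still $s_{i-1,i}$; hence the $2\times 2$ block seen by $l_{ii}$ has both off-diagonal entries equal to $s_{i-1,i}$, so by \eqref{abcd} it sends $x_{ii}\mapsto 2s_{i-1,i}\,x_{ii}$ and $x_{i-1,i-1}\mapsto s_{i-1,i}/(2x_{i-1,i-1})$. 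Composing these updates along $l_{nn},l_{n-1,n-1},\dots,l_{22}$ (with $l_{11}$ the identity) yields exactly the three formulas in \eqref{symm4}.

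For \eqref{symm5}, the last equality is immediate from \eqref{tz1}: since $z_{ni}=t_{n-i+1,n-i+1}$, the product over odd $i$ equals the product of the $t_{kk}$ with $k\equiv n\pmod 2$ and the product over even $i$ the product of those with $k\equiv n-1\pmod 2$, which is precisely the middle ratio. For the first equality I would induct on $n$, with $n=1$ trivial ($T^{1,1}(W)=W$). Writing $\Pi_n=\prod_{i=1}^n t_{ii}^{(-1)^{n-i}}$ for the middle expression and substituting \eqref{symm4}, every super-diagonal entry $s_{k,k+1}$ of $S$ telescopes away — its exponents from $t_{kk}$ and $t_{k+1,k+1}$ add to $(-1)^{n-k}+(-1)^{n-k-1}=0$, and the boundary entries $s_{12}$ and $s_{n-1,n}$ cancel the same way — leaving $\Pi_n=2^{1+(-1)^n}w_{nn}\prod_{i=1}^{n-1}s_{ii}^{(-1)^{n-i+1}}$. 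By the argument of the previous paragraph, $R^{n,n-1}_n$ fixes the on- and above-diagonal entries of its first $n-1$ rows, and by \eqref{recs} those rows start out as $T^{n-1,n-1}(W_{n-1,n-1})$; hence $s_{ii}=[T^{n-1,n-1}(W_{n-1,n-1})]_{ii}$ for $1\le i\le n-1$, so $\prod_{i=1}^{n-1}s_{ii}^{(-1)^{n-i+1}}$ is precisely $\Pi_{n-1}$ for $T^{n-1,n-1}(W_{n-1,n-1})$, which by the inductive hypothesis equals $4^{\fl{(n-1)/2}}\prod_{i<n}w_{ii}$. Since $2^{1+(-1)^n}4^{\fl{(n-1)/2}}=4^{\fl{n/2}}$, this gives $\Pi_n=4^{\fl{n/2}}\prod_i w_{ii}$.

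The step I expect to be the main obstacle is the bookkeeping in \eqref{symm4}: one must verify carefully that at the instant each $l_{ii}$ fires the adjacent sub-diagonal entry has already attained its final value (so that $T=T^t$ may be invoked) and that no later move perturbs $x_{ii}$. Once the order in which $\rho^n_n$ processes the diagonal is pinned down, the rest — the two applications of the basic recursion and the telescoping induction — is routine.
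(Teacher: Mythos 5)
Your proof is correct and follows essentially the same route as the paper's: the same iterated use of \eqref{rec} and \eqref{tran} to derive \eqref{recs}, the same observation that $R^{n,n}_n$ fixes the above-diagonal entries (and hence, by output symmetry, pins down the sub-diagonal ones) before $\rho^n_n$ sweeps the diagonal, and the same key fact $s_{ii}=t^{n-1}_{ii}$ driving the induction for part~(b). The only stylistic differences are that you unroll $\rho^n_n=l_{11}\circ\cdots\circ l_{nn}$ explicitly where the paper appeals to the Noumi--Yamada insertion formulas of Section~\ref{nvo} (equivalent by Proposition~\ref{tzprop}), and your telescoping of the exponents $(-1)^{n-i}$ in part~(b) avoids the paper's explicit even/odd case split and lets the induction start at $n=1$ rather than $n=2,3$.
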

\begin{proof} Part (a). 
Using (\ref{rec}), (\ref{tran}) and the fact the $W$ is symmetric,
\begin{align*}
T^{n,n}(W) &= R^{n,n}_n \begin{pmatrix} T^{n-1,n}(W_{n-1,n}) \\ w_{n1}\ \ldots\ w_{nn} \end{pmatrix} \\
&= R^{n,n}_n \begin{pmatrix} \left[ T^{n,n-1}([W_{n-1,n}]^t)\right]^t \\ w_{n1}\ \ldots\ w_{nn} \end{pmatrix} \\
&= R^{n,n}_n \begin{pmatrix} \left[ T^{n,n-1}(W_{n,n-1})\right]^t \\ w_{n1}\ \ldots\ w_{nn} \end{pmatrix} \\
&= R^{n,n}_n \begin{pmatrix} \left[ R^{n,n-1}_n \begin{pmatrix} T^{n-1,n-1}(W_{n-1,n-1}) \\ 
w_{n1}\ \ldots\ w_{n,n-1} \end{pmatrix} \right]^t \\ w_{n1}\ \ldots\ w_{nn} \end{pmatrix} \\
&= R^{n,n}_n \begin{pmatrix} \left[ R^{n,n-1}_n \begin{pmatrix} T^{n-1,n-1}(W_{n-1,n-1}) \\ 
w_{1n}\ \ldots\ w_{n-1,n} \end{pmatrix} \right]^t \\ w_{1n}\ \ldots\ w_{nn} \end{pmatrix} .
\end{align*}
This proves the first claim.  So we have
$$T^{n,n}(W) = R^{n,n}_n \begin{pmatrix} S \\ w_{1n}\ \ldots\ w_{nn} \end{pmatrix} ,$$
where $S\in (\R_{>0})^{(n-1)\times n}$.
To prove the second claim, first note that the mapping $R^{n,n}_n$ leaves the elements of
its input matrix which are strictly above the diagonal unchanged.  Thus, $ t_{ij}=s_{ij}$ for $1\le i<j\le n$.
Using this, the symmetry of $T$, and recalling how the row insertion procedure works (see Section~\ref{nvo}), 
we see that
$$t_{nn}=w_{nn} (t_{n-1,n}+s_{n-1,n})=2s_{n-1,n}w_{nn},$$
\begin{align*}
t_{n-1,n-1} &= \frac{t_{n-1,n} s_{n-1,n}}{s_{n-1,n-1}(t_{n-1,n}+s_{n-1,n})} (t_{n-2,n-1}+s_{n-2,n-1})\\
&=s_{n-1,n} s_{n-2,n-1} / s_{n-1,n-1} ,
\end{align*}
and so on; for $2\le i\le n-1$ we have $t_{ii}=s_{i,i+1}s_{i-1,i}/s_{ii}$ and then finally,
$$t_{11}=\frac{t_{12}s_{12}}{s_{11}(t_{12}+s_{12})}=s_{12}/2s_{11} ,$$
as required.

Part (b).  The second equality in \eqref{symm5} is a consequence of 
\eqref{tz1}.   The first equality is proved by induction on $n$.   Cases $n=2$ and 
$n=3$ are checked by hand. 

 Suppose  \eqref{symm5} is true for $n-1$.
Observe  first  from the definition of the mappings that 
$R^{n,n-1}_n$ operating on  $\bigl(\begin{smallmatrix} T^{n-1,n-1} \\ w_{1n}\ \ldots\ w_{n-1,n} \end{smallmatrix} \bigr)$ does not alter the diagonal
$\{ t^{n-1}_{ii}\}_{1\le i\le n-1}$ of $T^{n-1,n-1}$. 
Consequently \eqref{defS} implies 
 that $s_{ii}=t^{n-1}_{ii}$ for $1\le i\le n-1$.  
 
Suppose $n$ is even. Then the middle fraction of \eqref{symm5} 
develops as follows,  through equations  \eqref{symm4}, $s_{ii}=t^{n-1}_{ii}$
and by induction:   
\begin{align*}
\frac{t_{nn}t_{n-2,n-2}\dotsm t_{22}}{t_{n-1,n-1}t_{n-3,n-3}\dotsm t_{11}}
 &=\frac{\displaystyle  2s_{n-1,n}w_{nn} \cdot \frac{s_{n-2,n-1} s_{n-3,n-2}}{s_{n-2,n-2}}
 \dotsm \frac{s_{23} s_{12}}{s_{22}}  }  
 {\displaystyle \frac{s_{n-1,n} s_{n-2,n-1}}{s_{n-1,n-1}}  \cdot 
 \frac{s_{n-3,n-2} s_{n-4,n-3}}{s_{n-3,n-3}} \dotsm  \frac{s_{12}}{2s_{11}}   } \\[6pt]
&= 4w_{nn} \cdot \frac{s_{n-1,n-1}s_{n-3,n-3}\dotsm s_{11}}{s_{n-2,n-2}s_{n-4,n-4}\dotsm s_{22}} \\
&= 4w_{nn} \cdot 4^{\frac{n}2-1} \prod_{i=1}^{n-1} w_{ii}
=  4^{\fl{n/2}}  \prod_{i=1}^n w_{ii} . 
\end{align*} 
The case of odd $n$ develops similarly except that now the product
in the numerator finishes with ${s_{12}}/{2s_{11}}$ and consequently 
the factors of 2 cancel  each other. 
\end{proof}
\begin{thm}\label{jsym} Suppose that $n=m$ and $W$ is symmetric.  Then $T=T(W)=(t_{ij})$ is also symmetric,
and the Jacobian of the map 
$$(\log w_{ij},\ 1\le i\le j\le n)\mapsto (\log t_{ij},\ 1\le i\le j\le n)$$
is $\pm 1$. \end{thm}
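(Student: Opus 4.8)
The symmetry of $T=T(W)$ is immediate: since $T(W^t)=T(W)^t$, taking $W=W^t$ gives $T(W)=T(W)^t$. So the substance is the Jacobian claim, which I would prove by induction on $n$ (the case $n=1$ is trivial, and $n=2$ can be checked by hand). Write $\Phi_n$ for the map $(\log w_{ij})_{1\le i\le j\le n}\mapsto(\log t_{ij})_{1\le i\le j\le n}$, a diffeomorphism between open subsets of $\R^{\binom{n+1}{2}}$. The idea is to factor $\Phi_n=\Phi_n^{(3)}\circ\Phi_n^{(2)}\circ\Phi_n^{(1)}$ along the recursion~\eqref{recs} of Lemma~\ref{lem-sym}, with each $\Phi_n^{(k)}$ a diffeomorphism of (open subsets of) $\R^{\binom{n+1}{2}}$ of Jacobian $\pm1$, and then multiply. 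The first factor $\Phi_n^{(1)}$ runs gRSK on the top-left $(n-1)\times(n-1)$ block while carrying the last row along unchanged: it sends $(\log w_{ij})_{i\le j\le n}$ to $\bigl((\log t^{n-1}_{ij})_{1\le i\le j\le n-1},\ \log w_{1n},\dots,\log w_{nn}\bigr)$, where $t^{n-1}=T^{n-1,n-1}(W_{n-1,n-1})$ and we use $w_{in}=w_{ni}$; on the first $\binom n2$ coordinates this is $\Phi_{n-1}$ and on the remaining $n$ it is the identity, so its Jacobian is $\pm1$ by the inductive hypothesis.

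The second factor $\Phi_n^{(2)}$ carries out the row insertion $R^{n,n-1}_n$ appearing in~\eqref{recs}--\eqref{defS}. Two structural observations make this routine. First, $R^{n,n-1}_n=\pi^1_2\circ\cdots\circ\pi^{n-1}_n$ affects row $n$ only through its rightmost factor $\pi^{n-1}_n=l_{n,n-1}\circ\cdots\circ l_{n1}$, and (by the analysis in Proposition~\ref{tzprop}) it leaves the on-or-above-diagonal entries of the top $(n-1)\times(n-1)$ block untouched; so in~\eqref{defS} one has $s_{ij}=t^{n-1}_{ij}$ for $1\le i\le j\le n-1$ and these coordinates pass through unchanged, while $(s_{1n},\dots,s_{n-1,n})$ is the bottom row of $R^{n,n-1}_n$ applied to the matrix with top block $T^{n-1,n-1}$ and bottom row $(w_{n1},\dots,w_{n,n-1})$. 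Second, tracking that bottom row through $l_{n1},\dots,l_{n,n-1}$ (equivalently, via the insertion described in Section~\ref{nvo}, cf.~\eqref{NYalg1}) yields $s_{1n}=p_1w_{n1}$ and $s_{kn}=w_{nk}(p_k+s_{k-1,n})$ for $2\le k\le n-1$, where $p_k$ are entries of $T^{n-1,n-1}$ not involving the last row. Thus $\Phi_n^{(2)}$ fixes the $t^{n-1}$-coordinates and $\log w_{nn}$ and replaces $(\log w_{n1},\dots,\log w_{n,n-1})$ by $(\log s_{1n},\dots,\log s_{n-1,n})$; in logarithmic variables that substitution is lower triangular with $1$'s on the diagonal, hence $\det\Phi_n^{(2)}=1$.

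The last factor $\Phi_n^{(3)}$ records the net effect of the final map $R^{n,n}_n$ in~\eqref{recs}, which on the relevant coordinates is given by~\eqref{symm4}: with $s_{ij}=t^{n-1}_{ij}$ for $i\le j\le n-1$ and $s_{in}$ as above, $\log t_{ij}=\log s_{ij}$ for $i<j\le n$, $\log t_{11}=\log s_{12}-\log s_{11}-\log2$, $\log t_{ii}=\log s_{i-1,i}+\log s_{i,i+1}-\log s_{ii}$ for $2\le i\le n-1$, and $\log t_{nn}=\log s_{n-1,n}+\log w_{nn}+\log2$. This is an affine map whose linear part is block lower triangular: the off-diagonal outputs $t_{ij}$ ($i<j\le n$) are a relabelling of a (square) set of the input coordinates, and the complementary square block, expressing $(\log t_{11},\dots,\log t_{n-1,n-1},\log t_{nn})$ through $(\log s_{11},\dots,\log s_{n-1,n-1},\log w_{nn})$, is $\mathrm{diag}(-1,\dots,-1,+1)$. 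Hence $\det\Phi_n^{(3)}=(-1)^{n-1}=\pm1$, and multiplying the three Jacobians closes the induction.

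The point that demands care — and what I'd view as the main obstacle — is the bookkeeping: one must isolate, at each stage, precisely which coordinates to carry forward so that $\Phi_n^{(1)},\Phi_n^{(2)},\Phi_n^{(3)}$ are genuine maps between spaces of the common dimension $\binom{n+1}{2}$ (note that the matrix $S$ of~\eqref{defS} and the $n\times n$ matrix obtained by adjoining the row $(w_{1n},\dots,w_{nn})$ are rectangular with dependent entries, so one cannot simply quote ``composition of $l_{ij}$'s has Jacobian $\pm1$''), and one must check carefully that $R^{n,n-1}_n$ really does leave the on-or-above-diagonal part of the top block fixed and that the resulting map on the last row is triangular in logarithmic coordinates. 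Once these are pinned down, the computation is routine, all the needed structure being furnished by Lemma~\ref{lem-sym}, Theorem~\ref{vp}, and Proposition~\ref{tzprop}.
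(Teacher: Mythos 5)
Your strategy — induction on $n$, factoring the map along the recursion \eqref{recs} into three pieces — is the same as the paper's, and your $\Phi_n^{(1)}$ and $\Phi_n^{(3)}$ are correct. But the description of $\Phi_n^{(2)}$ contains a genuine error. You assert that $R^{n,n-1}_n$ transforms only the bottom row of its input, so that $s_{ij}=t^{n-1}_{ij}$ for $1\le i<j\le n-1$ ``pass through unchanged.'' This is false. What $R^{n,n-1}_n$ leaves untouched are the entries on or \emph{above} the diagonal of its $n\times(n-1)$ input; it alters \emph{every} strictly-below-diagonal entry, not just row $n$. Already the first factor $\pi^{n-1}_n=l_{n,n-1}\circ\cdots\circ l_{n1}$ rewrites positions $(n-1,1),\dots,(n-1,n-2)$ through the $a\mapsto bc/(ab+ac)$ slot of each $l_{nj}$, and the subsequent $\pi^{i-1}_i$ rewrite the strict lower triangle of rows $2,\dots,n-1$. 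After the transposition in \eqref{defS}, the untouched entries become the $s_{ij}$ with $i\ge j$; the entries $s_{ij}$ with $i<j\le n-1$ — exactly the ones that enter \eqref{symm4} and reappear as the off-diagonal entries $t_{ij}=s_{ij}$ of the output — are the \emph{modified} ones. (For $n=3$, writing $t_{ij}$ for the entries of $T^{2,2}(W_{2,2})$, one finds $s_{12}=t_{11}t_{22}w_{31}/(t_{22}+t_{21}w_{31})\neq t_{12}$.) Consequently your $\Phi_n^{(2)}$ is not the map that actually occurs, the composition $\Phi_n^{(3)}\circ\Phi_n^{(2)}\circ\Phi_n^{(1)}$ as you describe it does not equal $\Phi_n$, and your lower-triangularity argument accounts only for the bottom-row update, leaving the volume preservation of the bulk of $R^{n,n-1}_n$ unaddressed.

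The repair is what the paper does. $R^{n,n-1}_n$ is a composition of $l_{ij}$'s, hence has Jacobian $\pm1$ in logarithmic variables on all $n(n-1)$ coordinates. Moreover, every $l_{ik}$ occurring in it reads and writes only positions whose column index is at most the row index, so $R^{n,n-1}_n$ restricts to a well-defined map on the on-and-below-diagonal coordinates of the $n\times(n-1)$ array, and this restriction is again a composition of volume-preserving local moves, hence has Jacobian $\pm1$ in logarithmic variables. Feeding it the symmetric matrix $T^{n-1,n-1}(W_{n-1,n-1})$ — whose independent coordinates are exactly the on-and-below-diagonal ones, supplied by your $\Phi_n^{(1)}$ — together with the row $(w_{n1},\dots,w_{n,n-1})$ gives the correct middle factor, producing precisely the variables $(\log s_{ij})_{i<j\le n}$ and $(\log s_{ii})_{i<n}$ with Jacobian $\pm1$; your $\Phi_n^{(3)}$ then applies verbatim and closes the induction.
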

\begin{proof}  We prove this by induction on $n$.  When $n=2$, we have $t_{11}= w_{12}/2$, 
$t_{12}=w_{11} w_{12}$, $t_{22}=2w_{11} w_{12} w_{22}$ and the result is immediate.
Now, by the previous lemma,
$$T=R^{n,n}_n
\begin{pmatrix}
\left[ R^{n,n-1}_n \begin{pmatrix} T^{n-1,n-1}(W_{n-1,n-1})\\ w_{1n}\ \ldots\ w_{n-1,n} \end{pmatrix} \right]^t \\
w_{1n}\ \ldots\ w_{n-1,n}\ w_{nn}
\end{pmatrix} .$$
Denoting by $(s_{ij})$ the elements of the matrix
$$S=\left[ R^{n,n-1}_n \begin{pmatrix} T^{n-1,n-1}(W_{n-1,n-1})\\ w_{1n}\ \ldots\ w_{n-1,n} \end{pmatrix} \right]^t,$$
we have, by the previous lemma, 
\begin{equation}\label{rem}
\begin{array}{ll}
 t_{ij}=s_{ij} & \mbox{ for } 1\le i<j\le n  \\
 t_{11}=s_{12}/2s_{11} & \\
 t_{ii}=s_{i,i+1}s_{i-1,i}/s_{ii} & \mbox{ for } 2\le i\le n-1\\
 t_{nn}=2s_{n-1,n}w_{nn} &  
 \end{array}
\end{equation}
This expresses the $n(n+1)/2$ variables $t_{ij},\ 1\le i\le j\le n$ as a function, which we shall
denote by $F$, of the $n(n+1)/2$ variables $s_{ij}, 1\le i<j\le n$ and $s_{11},\ldots,s_{n-1,n-1},w_{nn}$.

Denote by $t^{n-1}_{ij}$ the elements of the symmetric matrix $T^{n-1,n-1}(W_{n-1,n-1})$.
By the induction hypothesis, the map $$(\log w_{ij},\ 1\le i\le j\le n-1)\mapsto (\log t^{n-1}_{ij},\ 1\le i\le j\le n-1)$$
has Jacobian $\pm 1$.  The mapping $R^{n,n-1}_n$ on the whole of $(\R_{>0})^{n\times (n-1)}$
is a composition of $l_{ij}$-maps and hence has Jacobian $\pm 1$ in logarithmic variables;
since it leaves matrix elements above the diagonal unchanged, its restriction to the space of matrix
elements on and below the diagonal also has Jacobian $\pm 1$ in logarithmic variables.
It follows that the mapping
\begin{align*}
(\log w_{ij}, 1\le i\le j <n;\ & \log w_{in}, 1\le i<n) \\
& \mapsto (\log s_{ij}, 1\le i<j\le n;\ \log s_{ii}, 1\le i <n)
\end{align*}
has Jacobian $\pm 1$.  It therefore remains only to show that the Jacobian sub matrix of the map 
$F$ (in logarithmic variables) which corresponds to the variables 
$(\log s_{11},\ldots,\log s_{n-1,n-1},\log w_{nn})$ and $(\log t_{11},\ldots,\log t_{nn})$ has determinant $\pm 1$.  
From (\ref{rem}), this sub matrix is given by
$$
\bordermatrix{ & \log s_{11} & \log s_{22} & \cdots & \log s_{n-1,n-1} & \log w_{nn} \cr
\log t_{11} & -1 &&&&\cr
\log t_{22} & &-1&&&\cr
\vdots &&&\ddots&&\cr
\log t_{n-1,n-1} & &&&-1&\cr
\log t_{n,n} & &&&&1\cr
},
$$
which completes the proof.
\end{proof}

%%%%%  DON'T FORGET: extra diagonal parameter \dipa was originally \delta  
\def\dipa{\zeta}

Consider the measure on symmetric input matrices with positive entries defined by
\begin{eqnarray}\label{weightsym}
\nu_{\alpha,\dipa}(dw) = \prod_{i<j} w_{ij}^{-\alpha_i-\alpha_j} \prod_i w_{ii}^{-\alpha_i-\dipa}
\exp\left(-\sum_{i<j} \frac1{w_{ij}} -  \sum \frac1{2w_{ii}} \right) \prod_{i\le j} \frac{dw_{ij}}{w_{ij}} ,
\end{eqnarray}
where $\alpha\in\R^n$ and $\dipa\in\R$ satisfy $\alpha_i+\dipa>0$ for each $i$
and $\alpha_i+\alpha_j>0$ for $i\ne j$.
Note that
$$\int_{(\R_{>0})^{n(n+1)/2} }\nu_{\alpha,\dipa}(dw)
 = 2^{\sum_{i=1}^n(\alpha_i+\dipa)} \prod_i \Gamma(\alpha_i+\dipa) \prod_{i<j}\Gamma(\alpha_i+\alpha_j) .$$
In this setting we have $R=C$ and so, using (\ref{pq}) and Lemma \ref{lem-sym}(b),
$$\prod_{i<j} w_{ij}^{-\alpha_i-\alpha_j} \prod_i w_{ii}^{-\alpha_i-\dipa} 
= 4^{\lfloor n/2\rfloor \dipa}\prod_i z_{ni}^{(-1)^i\dipa}R^{-\alpha}.$$
Thus, by Theorems \ref{bi} and \ref{jsym}, we obtain the following result.
\begin{cor}\label{pfs} The push-forward of $\nu_{\alpha,\dipa}$ under $\sigma$ is given by
\be\nu_{\alpha,\dipa}\circ\sigma^{-1} (dx) = 
4^{\lfloor n/2\rfloor \dipa} f(x)^\dipa
e^{-\frac{1}{2x_n}} \Psi^n_\alpha(x) \prod_{i=1}^n \frac{dx_i}{x_i} , \label{nu4}\ee
where
$$f(x)=\prod_i x_i^{(-1)^i}.$$
\end{cor}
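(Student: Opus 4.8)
The plan is to obtain \eqref{nu4} as a two‑stage push‑forward of $\nu_{\alpha,\dipa}$: first under the (symmetry‑preserving) gRSK map $W\mapsto T=T(W)$, and then by integrating out the entries of $T$ that are not recorded by the shape map $\sigma$.

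First I would change variables $W\mapsto T$. Since $W$ is symmetric, $T$ is symmetric too, so $T$ is parametrised by the $n(n+1)/2$ coordinates $(t_{ij})_{1\le i\le j\le n}$, and Theorem~\ref{jsym} tells us that $(\log w_{ij})_{i\le j}\mapsto(\log t_{ij})_{i\le j}$ has Jacobian $\pm1$; hence $\prod_{i\le j}dw_{ij}/w_{ij}$ is carried to $\prod_{i\le j}dt_{ij}/t_{ij}$, and the problem reduces to re-expressing the density of $\nu_{\alpha,\dipa}$ in the variables $t_{ij}$.

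Next I would rewrite the two factors of that density. For the exponential factor, symmetry of $W$ gives $\sum_{i<j}1/w_{ij}+\sum_i 1/(2w_{ii})=\tfrac12\sum_{i,j}1/w_{ij}$, and Theorem~\ref{bi} with $s=1$ and $p=n$ evaluates $\sum_{i,j}1/w_{ij}=\E_1(T)$. Writing $T=(P,Q)$ with $P=Q$ and using the decomposition $\E_s(X)=\F_0(P)+\F_s(Q)$ from Section~\ref{gRSKsec} together with $\F_1(P)=\F_0(P)+1/z_{nn}$, the exponent $\tfrac12\E_1(T)$ equals $\F_0(P)+1/(2z_{nn})$. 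For the monomial factor I would quote the identity displayed just before the statement — which itself follows from $R=C$, from \eqref{pq} with $\lambda=0$ (so that $R^{-\alpha}=Q^{-\alpha}=P^{-\alpha}$), and from Lemma~\ref{lem-sym}(b) — namely
\[\prod_{i<j}w_{ij}^{-\alpha_i-\alpha_j}\prod_i w_{ii}^{-\alpha_i-\dipa}=4^{\lfloor n/2\rfloor\dipa}\,P^{-\alpha}\prod_i z_{ni}^{(-1)^i\dipa}.\]
Both expressions involve the bottom-row entries $z_{ni}$ of the height-$n$ pattern $P$; Proposition~\ref{tzprop} together with \eqref{sigma} identifies these as $z_{n\ell}=x_\ell$, where $x=\sigma(W)$, so in particular $z_{nn}=x_n$ and $\prod_i z_{ni}^{(-1)^i\dipa}=\prod_i x_i^{(-1)^i\dipa}=f(x)^\dipa$.

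Finally, using \eqref{tz1} to identify $(t_{ij})_{1\le i\le j\le n}$ with the single height-$n$ pattern $P$, one has $\prod_{i\le j}dt_{ij}/t_{ij}=dP\,\prod_i dx_i/x_i$ with $x=\sigma(W)$ the bottom row. Since the prefactor $4^{\lfloor n/2\rfloor\dipa}f(x)^\dipa e^{-1/(2x_n)}$ depends on $x$ alone, integrating out the internal entries and invoking the generating-function formula \eqref{gf} (with $h=n$ and $s=0$, so $\Psi^n_{\alpha;0}=\Psi^n_\alpha$) replaces $\int_{\Pi^n(x)}P^{-\alpha}e^{-\F_0(P)}\,dP$ by $\Psi^n_\alpha(x)$, which gives \eqref{nu4}. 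The only delicate point in this programme is the pattern/matrix index dictionary used in the previous paragraph: one must check that the apex of the bottom row of $P$ is $z_{nn}=x_n$ (not $x_1$), so that the factor $e^{-1/(2x_n)}$ attaches to the last variable and $\prod_i z_{ni}^{(-1)^i\dipa}$ collapses to $f(x)^\dipa$; granted Proposition~\ref{tzprop} and \eqref{sigma} this is routine, and the remaining content is simply an assembly of Theorems~\ref{bi} and \ref{jsym} with Lemma~\ref{lem-sym}.
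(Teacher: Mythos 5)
Your proposal is correct and is essentially the paper's own argument: the paper disposes of the corollary by the displayed monomial identity (from $R=C$, \eqref{pq} and Lemma~\ref{lem-sym}(b)) followed by the citation of Theorems~\ref{bi} and \ref{jsym}, and your write-up simply fills in those same steps — including the correct halving $\tfrac12\E_1(T)=\F_0(P)+1/(2z_{nn})$, the identification $z_{n\ell}=x_\ell$ so that $z_{nn}=x_n$, and the final integration over the internal pattern entries via \eqref{gf}.
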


%%%%%  DON'T FORGET: extra diagonal parameter \dipa was originally \delta  

If $\lambda\in\C^n$ and $\gamma\in\C$
satisfy $\Re(\lambda_i+\gamma)>0$ for each $i$,
and $\Re(\lambda_i+\lambda_j)>0$ for $i\ne j$, then
\begin{align*}
\int_{(\R_{>0})^n}  f(x)^\gamma & e^{-\frac{1}{2x_n}} \Psi^n_\lambda(x) \prod_{i=1}^n \frac{dx_i}{x_i} \\
& = 4^{-\lfloor n/2\rfloor \gamma}
2^{\sum_{i=1}^n(\lambda_i+\gamma)} \prod_i\Gamma(\lambda_i+\gamma) \prod_{i<j}\Gamma(\lambda_i+\lambda_j) .
\end{align*}
Now, using (\ref{a}) we can strengthen this to:
\begin{cor} Suppose $\lambda\in\C^n$ and $\gamma\in\C$
satisfy $\Re(\lambda_i+\gamma)>0$ for each $i$,
and $\Re(\lambda_i+\lambda_j)>0$ for $i\ne j$. 
Then, for $s>0$,
\begin{align*}
\int_{(\R_{>0})^n} & f(x)^\gamma  e^{-s/x_n} \Psi^n_\lambda(x) \prod_{i=1}^n \frac{dx_i}{x_i} \\
&= c_n(s,\gamma) s^{-\sum_{i=1}^n\lambda_i} 
\prod_i \Gamma(\lambda_i+\gamma) \prod_{i<j}\Gamma(\lambda_i+\lambda_j) ,
\end{align*}
where 
$$c_n(s,\gamma)=\begin{cases} 1 & \mbox{ if $n$ is even,}\\
s^{-\gamma} & \mbox{ if $n$ is odd.}
\end{cases}$$
\label{cor-sym-3}\end{cor}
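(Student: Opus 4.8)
The plan is to obtain the general-$s$ identity by a simple rescaling that reduces everything to the case $s=1/2$, which is exactly the displayed identity immediately preceding the corollary (itself a consequence of Corollary~\ref{pfs} and \eqref{nu4}). Write
\[
I(s)=\int_{(\R_{>0})^n} f(x)^\gamma\, e^{-s/x_n}\,\Psi^n_\lambda(x)\,\prod_{i=1}^n\frac{dx_i}{x_i},
\]
so that the displayed identity asserts
\[
I(1/2)=4^{-\lfloor n/2\rfloor\gamma}\,2^{\sum_i(\lambda_i+\gamma)}\prod_i\Gamma(\lambda_i+\gamma)\prod_{i<j}\Gamma(\lambda_i+\lambda_j).
\]
I would first note that $I(s)$ converges absolutely under the stated hypotheses: since $f>0$ and $s>0$, the modulus of the integrand is bounded by $f(x)^{\Re\gamma}e^{-s/x_n}\Psi^n_{\Re\lambda}(x)$, using $|\Psi^n_\lambda(x)|\le\Psi^n_{\Re\lambda}(x)$ (immediate from the positivity of the integrand in \eqref{gf}), and this has finite integral by the $s=1/2$ case together with the rescaling below.

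The main step is the change of variables $x_i=2s\,y_i$, $1\le i\le n$, in $I(s)$. Four elementary facts make this transparent: the measure $\prod_i dx_i/x_i$ is scale invariant; with this scaling $e^{-s/x_n}=e^{-1/(2y_n)}$; $f$ is homogeneous of degree $\epsilon_n:=\sum_{i=1}^n(-1)^i$, so $f(2s\,y)=(2s)^{\epsilon_n}f(y)$, where $\epsilon_n=0$ for $n$ even and $\epsilon_n=-1$ for $n$ odd; and, by \eqref{a}, $\Psi^n_\lambda(2s\,y)=(2s)^{-\sum_i\lambda_i}\Psi^n_\lambda(y)$. Combining these gives
\[
I(s)=(2s)^{\,\epsilon_n\gamma-\sum_i\lambda_i}\,I(1/2).
\]

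Finally I would substitute the value of $I(1/2)$ and collect exponents. The total power of $2$ is $\gamma\bigl(\epsilon_n+n-2\lfloor n/2\rfloor\bigr)$, which vanishes in both parities ($n$ even: $0+n-n=0$; $n$ odd: $-1+n-(n-1)=0$), so all factors of $2$ cancel; the residual power of $s$ is $s^{\,\epsilon_n\gamma-\sum_i\lambda_i}$, namely $s^{-\sum_i\lambda_i}$ for $n$ even and $s^{-\gamma-\sum_i\lambda_i}$ for $n$ odd, which is precisely $c_n(s,\gamma)\,s^{-\sum_i\lambda_i}$. This yields the asserted formula. I do not expect a genuine obstacle here: the computation is routine once the substitution is set up, and the only delicate points are the parity bookkeeping of $\epsilon_n$ against $2\lfloor n/2\rfloor$ and the preliminary convergence remark that legitimizes the change of variables.
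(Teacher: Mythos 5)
Your proposal is correct and follows essentially the same route as the paper: both deduce the general-$s$ identity from the $s=1/2$ case (obtained from Corollary~\ref{pfs} by integration) via the scaling relation \eqref{a}; you have simply written out the change of variables $x\mapsto 2s\,y$ and the parity bookkeeping that the paper leaves implicit in the phrase ``using \eqref{a} we can strengthen this to.'' The exponent check $\gamma(\epsilon_n+n-2\lfloor n/2\rfloor)=0$ in both parities and the residual power $s^{\epsilon_n\gamma-\sum_i\lambda_i}=c_n(s,\gamma)s^{-\sum_i\lambda_i}$ are accurate.
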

By (\ref{inv}) this is equivalent to the following identity which is equivalent to an integral identity
conjectured by Bump and Friedberg~\cite{bf} and proved by Stade~\cite[Theorem 3.3]{stade-jams},
see Theorem~\ref{st3} below.  We note that in \cite{stade-jams} the corresponding statement
is proved without any restrictions on the parameters.  This integral is associated
with an archimedean $L$-factor of an exterior square automorphic $L$-function on $GL(n,\R)$.
\begin{cor}[Stade]\label{nwid} Suppose $\lambda\in\C^n$ and $\gamma\in\C$
satisfy $\Re(\lambda_i+\gamma)>0$ for each $i$,
and $\Re(\lambda_i+\lambda_j)>0$ for $i\ne j$. 
Then, for $s>0$,
\begin{align*}
\int_{(\R_{>0})^n} & f(x')^\gamma e^{-sx_1} \Psi^n_{-\lambda}(x) \prod_{i=1}^n \frac{dx_i}{x_i} \\
& = c_n(s,\gamma) s^{-\sum_{i=1}^n\lambda_i} 
\prod_i \Gamma(\lambda_i+\gamma) \prod_{i<j}\Gamma(\lambda_i+\lambda_j) ,
\end{align*}
where $x'_i=1/x_{n-i+1}$.
\end{cor}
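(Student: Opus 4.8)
The plan is to obtain this identity as an immediate consequence of Corollary~\ref{cor-sym-3} via the reflection $x_i\mapsto 1/x_{n-i+1}$ and the symmetry~\eqref{inv} of the Whittaker function, precisely as flagged in the sentence preceding the statement. First I would write the left-hand side of the claimed identity as
$$I=\int_{(\R_{>0})^n} f(x')^\gamma\, e^{-sx_1}\,\Psi^n_{-\lambda}(x)\,\prod_{i=1}^n\frac{dx_i}{x_i},\qquad x'_i=1/x_{n-i+1},$$
and perform the change of variables $y_i=x'_i=1/x_{n-i+1}$, which is an involution of $(\R_{>0})^n$.

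Next I would track how each factor of the integrand transforms under this substitution. In logarithmic coordinates the map sends $\log x_j\mapsto -\log y_{n-j+1}$, which is a signed permutation of the coordinates and hence preserves Lebesgue measure, so $\prod_i dx_i/x_i=\prod_i dy_i/y_i$. Applying~\eqref{inv} with $-\lambda$ in place of $\lambda$ gives $\Psi^n_{-\lambda}(x)=\Psi^n_{\lambda}(x')=\Psi^n_\lambda(y)$. Since $y=x'$ by construction, $f(x')^\gamma=f(y)^\gamma$. Inverting the substitution we have $x_j=1/y_{n-j+1}$, so $x_1=1/y_n$ and $e^{-sx_1}=e^{-s/y_n}$. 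Assembling these,
$$I=\int_{(\R_{>0})^n} f(y)^\gamma\, e^{-s/y_n}\,\Psi^n_\lambda(y)\,\prod_{i=1}^n\frac{dy_i}{y_i},$$
which is exactly the integral appearing on the left of Corollary~\ref{cor-sym-3}, under identical hypotheses on $\lambda$ and $\gamma$. Its value there is $c_n(s,\gamma)\,s^{-\sum_i\lambda_i}\prod_i\Gamma(\lambda_i+\gamma)\prod_{i<j}\Gamma(\lambda_i+\lambda_j)$, which is then the asserted value of $I$.

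I expect no genuine obstacle: the argument is a single change of variables followed by an appeal to~\eqref{inv} and Corollary~\ref{cor-sym-3}. The one point worth a moment's care is the factor $f$; since the statement is deliberately written with $f(x')$ for exactly the reflection $x'_i=1/x_{n-i+1}$ that appears in~\eqref{inv}, it transfers to $f(y)$ with no stray sign, and in particular the constant $c_n(s,\gamma)$ requires no adjustment. (Had one instead written $f(x)$, one would need $f(x')=f(x)^{(-1)^n}$ and would have to absorb the resulting discrepancy into $c_n$ when $n$ is odd.)
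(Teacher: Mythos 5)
Your proposal is correct and is exactly the paper's route: the paper obtains Corollary~\ref{nwid} from Corollary~\ref{cor-sym-3} via the involution $x_i\mapsto 1/x_{n-i+1}$ and the symmetry~\eqref{inv}, just as you do. All the bookkeeping (measure invariance, $e^{-sx_1}=e^{-s/y_n}$, and the deliberate use of $f(x')$ so that no extra factor enters $c_n(s,\gamma)$) matches the paper's intent, including its remark that $f(x')=f(x)$ for $n$ even and $f(x')=1/f(x)$ for $n$ odd.
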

Note that $f(x')=f(x)$ if $n$ is even and $f(x')=1/f(x)$ if $n$ is odd.

Now, consider the probability measure on symmetric matrices with positive entries defined by
 \be\tilde\nu_{\alpha,\dipa}(dw)=Z_{\alpha,\dipa}^{-1} \nu_{\alpha,\dipa}(dw),
 \label{nutilde}\ee
 where
$$Z_{\alpha,\dipa}=2^{\sum_{i=1}^n(\alpha_i+\dipa)} \prod_i \Gamma(\alpha_i+\dipa) \prod_{i<j}\Gamma(\alpha_i+\alpha_j).$$
From Corollary \ref{pfs}, we obtain: 
\begin{cor}\label{parsym}
The Laplace transform of the law of the polymer partition function $t_{nn}$ under $\tilde\nu_{\alpha,\dipa}$ 
is given for $r>0$ by
 $$\int e^{-r t_{nn}} \tilde\nu_{\alpha,\dipa}(dw)= 4^{\lfloor n/2\rfloor \dipa} Z_{\alpha,\dipa}^{-1} 
 \int_{(\R_{>0})^n} f(x)^\dipa
 e^{-rx_1-\frac{1}{2x_n}} \Psi^n_\alpha(x) \prod_{i=1}^n \frac{dx_i}{x_i}.$$
\end{cor}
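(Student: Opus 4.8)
The plan is to obtain the formula as a direct consequence of Corollary \ref{pfs} together with a pushforward (change-of-variables) argument. The first observation is that, in the symmetric case $n=m$, the shape map \eqref{sigma} reads $\sigma(W)=(t_{nn},t_{n-1,n-1},\ldots,t_{11})$, so the polymer partition function $t_{nn}$ is precisely the first coordinate of $\sigma(W)$. Hence $e^{-rt_{nn}}$ is a function of $W$ that factors through $\sigma$: writing $x=\sigma(W)$, we have $e^{-rt_{nn}}=e^{-rx_1}$. By the general change-of-variables formula for pushforward measures,
$$\int e^{-rt_{nn}}\,\nu_{\alpha,\dipa}(dw)=\int_{(\R_{>0})^n}e^{-rx_1}\,\bigl(\nu_{\alpha,\dipa}\circ\sigma^{-1}\bigr)(dx).$$

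Next I would insert the explicit expression \eqref{nu4} for the pushforward measure from Corollary \ref{pfs}, namely $\bigl(\nu_{\alpha,\dipa}\circ\sigma^{-1}\bigr)(dx)=4^{\lfloor n/2\rfloor\dipa}f(x)^\dipa e^{-1/(2x_n)}\Psi^n_\alpha(x)\prod_{i=1}^n dx_i/x_i$, which gives
$$\int e^{-rt_{nn}}\,\nu_{\alpha,\dipa}(dw)=4^{\lfloor n/2\rfloor\dipa}\int_{(\R_{>0})^n}f(x)^\dipa e^{-rx_1-1/(2x_n)}\Psi^n_\alpha(x)\prod_{i=1}^n\frac{dx_i}{x_i}.$$
Dividing through by $Z_{\alpha,\dipa}$ and using $\tilde\nu_{\alpha,\dipa}=Z_{\alpha,\dipa}^{-1}\nu_{\alpha,\dipa}$ yields the claimed identity.

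It remains only to note that the right-hand integral is finite for $r>0$. Since $e^{-rx_1}\le 1$, this reduces to the case $r=0$, which is covered by the integral identity established just before Corollary \ref{cor-sym-3} (with $\lambda=\alpha$, $\gamma=\dipa$ and $s=1/2$); that identity in fact shows $\int_{(\R_{>0})^n}f(x)^\dipa e^{-1/(2x_n)}\Psi^n_\alpha(x)\prod_i dx_i/x_i=4^{-\lfloor n/2\rfloor\dipa}Z_{\alpha,\dipa}<\infty$ under the standing hypotheses on $\alpha$ and $\dipa$, and also confirms that both sides of the corollary equal $1$ at $r=0$. I do not anticipate any genuine obstacle: given Corollary \ref{pfs}, the proof is essentially bookkeeping, and the only point requiring care is the identification of $t_{nn}$ with the first (rather than the last) coordinate of $\sigma(W)$, so that the Laplace variable couples to $x_1$ while the factor $e^{-1/(2x_n)}$ inherited from Corollary \ref{pfs} involves $x_n=t_{11}$.
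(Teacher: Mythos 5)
Your proposal is correct and follows exactly the route the paper intends: the paper derives this corollary directly from Corollary \ref{pfs} with no further argument, and your change-of-variables step (identifying $t_{nn}$ with the first coordinate $x_1$ of $\sigma(W)$ and pushing forward) is precisely the implicit reasoning, with the finiteness check at $r=0$ a correct additional remark.
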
 

%%%%%  DON'T FORGET: extra diagonal parameter \dipa was originally \delta  

{\bf Remark: A formal computation.} We formally rewrite the above integral formula in terms of a multiple contour
integral that should be amenable to asymptotic analysis.
Let $\epsilon>0$ and set $\alpha_i'=\alpha_i+\epsilon$.
It follows from Corollary \ref{bs1} (or~\ref{L2})
that the function $ e^{-\frac{1}{2x_n}} \Psi^n_{\alpha'}(x)$ is in $L_2((\R_{>0})^n,\prod_{i=1}^n dx_i/x_i)$.
Moreover, by Corollary \ref{bs1}, for $\lambda\in\iota\R^n$,
\begin{equation}\label{t1}
\int_{(\R_{>0})^n} e^{-\frac{1}{2x_n}} \Psi^n_{\alpha'}(x) \Psi^n_\lambda(x) \prod_{i=1}^n \frac{dx_i}{x_i}
= 2^{\sum_i(\lambda_i+\alpha_i+\epsilon)} \prod_{i,j} \Gamma(\alpha_i+\lambda_j+\epsilon).
\end{equation}
Thus, by the Plancherel theorem, for any $g\in L_2((\R_{>0})^n,\prod_{i=1}^n dx_i/x_i)$ we can write
\begin{align}\label{pl}
 \int_{(\R_{>0})^n}  \overline{g(x)} e^{-\frac{1}{2x_n}} & \Psi^n_{\alpha'}(x)  \prod_{i=1}^n \frac{dx_i}{x_i} \nonumber \\
& =  \int_{\iota \R^n} \overline{ \hat g(\lambda) }
2^{\sum_i(\lambda_i+\alpha_i+\epsilon)} \prod_{i,j} \Gamma(\alpha_i+\lambda_j+\epsilon) s_n(\lambda) d\lambda .
\end{align}
%For well-behaved functions $g$ (for example if $\hat g(\lambda)$ is analytic and 
%decays rapidly as $|\lambda|\to\infty$, uniformly in a vertical strip containing the origin) 
%the vertical contours of integration can be shifted to the right, so that $\Re\lambda_i>0$ for each $i$.

Suppose $n$ is even.  By Corollary \ref{nwid}, if $r>0$ and $\Re\lambda_i>0$ for each $i$, 
\begin{align*} 
\int_{(\R_{>0})^n}  f(x)^\dipa  e^{-rx_1} & \Psi^n_{-\lambda}(x)  \prod_{i=1}^n \frac{dx_i}{x_i} \\
&= r^{-\sum_{i=1}^n\lambda_i} \prod_i \Gamma(\lambda_i+\dipa) \prod_{i<j}\Gamma(\lambda_i+\lambda_j).
\end{align*}
By (\ref{shift}) it follows that, for $\epsilon>0$ and $\lambda\in\iota\R^n$,
\begin{align} \label{t2}
\int_{(\R_{>0})^n}  f(x)^\dipa  e^{-rx_1} & \left(\prod_i x_i^\epsilon\right) \Psi^n_{-\lambda}(x)  \prod_{i=1}^n \frac{dx_i}{x_i} \\
&= r^{-\sum_{i=1}^n(\lambda_i+\epsilon)} \prod_i \Gamma(\lambda_i+\dipa+\epsilon) \prod_{i<j}
\Gamma(\lambda_i+\lambda_j+2\epsilon).\nonumber
\end{align}

Formally, combining (\ref{t1}), (\ref{t2}) and (\ref{pl}) yields the following integral formula for the 
Laplace transform of the law of the polymer partition function $t_{nn}$ 
under the probability measure $\tilde\nu_{\alpha,\dipa}$ :
\begin{align}\label{ifs-even}
 \int    e^{-r t_{nn}}   \tilde\nu_{\alpha,\dipa}(dw) 
 = \int_{\iota\R^n} \left(\frac{r}2\right)^{-\sum_i(\lambda_i+\epsilon)} 
\prod_i & \frac{\Gamma(\lambda_i+\dipa+\epsilon)}{\Gamma(\alpha_i+\dipa)}
 \prod_{i,j} \Gamma(\alpha_i+\lambda_j+\epsilon) \\
& \times \prod_{i<j} \frac{\Gamma(\lambda_i+\lambda_j+2\epsilon)}
{\Gamma(\alpha_i+\alpha_j)}
s_n(\lambda) d\lambda  \nonumber
\end{align} 
or, equivalently,
\begin{align}\label{ifs}
 \int   & e^{-r t_{nn}}   \tilde\nu_{\alpha,\dipa}(dw) \\
& = \int \left(\frac{r}2\right)^{-\sum_i\lambda_i} \prod_i \frac{\Gamma(\lambda_i+\dipa)}{\Gamma(\alpha_i+\dipa)}
\prod_{i,j} \Gamma(\alpha_i+\lambda_j) \prod_{i<j} \frac{\Gamma(\lambda_i+\lambda_j)}{\Gamma(\alpha_i+\alpha_j)}
s_n(\lambda) d\lambda  \nonumber
\end{align} 
where the integration is along vertical lines with $\Re\lambda_i>0$ for each $i$.
If $n$ is odd, we similarly formally obtain, this time using Theorem~\ref{thm-stade2-psi} instead of Corollary~\ref{nwid}
because in this case $f(x')^\dipa=f(x)^{-\dipa}$ and $\dipa>0$, 
\begin{align}\label{ifs-odd}
 \int   & e^{-r t_{nn}}   \tilde\nu_{\alpha,\dipa}(dw) \\
& = \int \left(\frac{r}2\right)^{-\sum_i\lambda_i} \prod_i \frac{\Gamma(\lambda_i-\dipa)}{\Gamma(\alpha_i+\dipa)}
\prod_{i,j} \Gamma(\alpha_i+\lambda_j) \prod_{i<j} \frac{\Gamma(\lambda_i+\lambda_j)}{\Gamma(\alpha_i+\alpha_j)}
s_n(\lambda) d\lambda  \nonumber
\end{align} 
where the integration is along vertical lines with $\Re\lambda_i>0$ for each $i$.
It seems reasonable to expect the integral formulas (\ref{ifs}) and (\ref{ifs-odd})
 to be valid, at least in some suitably regularised sense.

%\section{A version of geometric RSK for triangular arrays and polymers above a hard wall.}
\section{Geometric RSK for triangular arrays and paths below a hard wall}

\label{sec:wall}  

In this section we introduce a   birational, geometric RSK type mapping $T^\tr_n$ that maps triangular arrays $X_n=(x_{ij}, \,  1\leq j < i\leq n)$ to triangular arrays $T=(t_{ij}, \,   1\leq j< i\leq n)$, both with positive real entries. The motivation comes from the  symmetric polymer of Section \ref{sym-sec}, with a (de)pinning parameter $\zeta$ that tends to infinity. This will become clear later on in Proposition \ref{lot} (see also the remarks at the end of the section). Notions like the type and the shape can  be defined also for this mapping.  We prove that it satisfies a version of the fundamental identity \eqref{bi} and   preserves volume in logarithmic variables. Moreover, we can relate the shape to partition
functions of nonintersecting paths below a ``hard wall", that is, paths restricted to $\{(i,j)\colon j<i\}$.

For $n=2$ the mapping is defined by  
\begin{eqnarray}\label{deftrn2}
T^\tr_2\left( x_{21}\right)=x_{21}. 
\end{eqnarray}
For $n\geq 3$ we define inductively
\begin{eqnarray}\label{defttr}
T^\tr_n(X_n)=R^\tr_n 
 \begin{pmatrix} T^\tr_{n-1}(X_{n-1}) 
 \\ x_{n1}\ \ldots\ x_{n,n-1} \end{pmatrix},
\end{eqnarray}
with $X_{n-1}=(x_{ij}, \,\,1\leq j < i\leq n-1)$ and
\begin{eqnarray}\label{defrtr}
 R^\tr_n= \rho^{\tr,n}_{n-1}\circ \cdots \circ \rho^{\tr,n}_1
\end{eqnarray}
where  
\be \label{bktr}\begin{aligned} 
\rho^{\tr,n}_j&=\rho^{n}_j\qquad%\qquad\qquad\qquad\qquad\qquad, 
\text{ for $j=1,\dotsc,n-2$, }  \\[1pt]
\text{ and } \quad 
\rho^{\tr,n}_{n-1}&=b^{\tr, n}_{2,1}\circ \cdots \circ b^{\tr, n}_{n-1,n-2}\circ b^{\tr, n}_{n,n-1}\circ r^\tr_{n,n-1},
\end{aligned}\ee
and $\rho^{n}_j$ is defined in \eqref{rodef}.
To complete the definition of   $T^\tr_n$ we   define the mappings $b^{\tr, n}_{j,j-1}$  and   $r^\tr_{n,n-1}$  on  a triangular array $X_n=(x_{ij}, \,  1\leq j< i\leq n)$.
% and by doing so we will define the mapping $\rho^{\tr,n}_{n-1}$. 
This is done as follows.  The mapping $r^\tr_{n,n-1}$ replaces $x_{n,n-1}$ by $1/x_{n,n-1}$. Observing the conventions   $x_{i0}=x_{n+1,n-1}=1$, make these definitions: 
\begin{itemize}
\item[$\bullet$] For $k=0,1,2,\dotsc,\fl{\frac{n}2}-1$,  $b^{\tr, n}_{n-2k,n-2k-1}$  replaces $x_{n-2k,n-2ki-1}$ with %by $x'_{n-2k,n-2k-1}$, where 
\begin{eqnarray}\label{bii-1}
%x'_{i,i-1}&=& x_{i,i-1} x_{i,i-2}\nonumber\\
x'_{n-2k,n-2k-1}&=&\frac{x_{n-2k+1,n-2k-1} \,\,x_{n-2k,n-2k-2}}{x_{n-2k,n-2k-1}}. 
\end{eqnarray}
 \item[$\bullet$]
For $k=1,2,\dotsc,\fl{\frac{n-1}2}$,  $b^{\tr, n}_{n-2k+1,n-2k}$  is the identity mapping.
\end{itemize}
 We  present explicitly  the cases $n=3,4$ to clarify the definitions. For $n=3$, 
\begin{align*}
T^\tr_3\left( \begin{array}{ccc} 
x_{21}\\
x_{31}& x_{32}\\
\end{array}\right) &=
\rho^{\tr,3}_2\circ \rho^{\tr,3}_1  
 \begin{pmatrix} T^\tr_{2}(x_{21})
 \\ x_{31}\,\,\,\,\,\, x_{32}\end{pmatrix} %\\  &
 =
\rho^{\tr,3}_2\circ \rho^{\tr,3}_1  
\left( \begin{array}{cc}
x_{21} & \\ x_{31} & x_{32}
\end{array} \right)\\
&=
\rho^{\tr,3}_2  
\left( \begin{array}{cc}
x_{21} & \\ x_{21}x_{31} & x_{32}
\end{array} \right)  %\\  &
=
 \left( \begin{array}{cc}
x_{21} & \\ x_{21}x_{31} & x_{21}x_{31}x_{32}
\end{array} \right).
\end{align*}
For $n=4$, 
\begin{align*}
&T^\tr_4\left( \begin{array}{ccc} 
x_{21}\\
x_{31}& x_{32}\\
x_{41}& x_{42} & x_{43}\\
\end{array}\right) 
= 
\rho^{\tr,4}_3\circ \rho^{\tr,4}_2 \circ \rho^{\tr,4}_1 
 \begin{pmatrix} T^\tr_3
 \left( \begin{array}{cc} 
x_{21}& \\
x_{31}& x_{32}\\
\end{array}
 \right) 
 \\ x_{41}\,\,\,\,\,\, x_{42}\,\,\,\,\,\, x_{43} \end{pmatrix}\\
 &\qquad\qquad =
 \rho^{\tr,4}_3\circ \rho^{\tr,4}_2 \circ \rho^{\tr,4}_1
 \left( \begin{array}{ccc} 
x_{21}\\
x_{21}x_{31}& x_{21}x_{31}x_{32}\\
x_{41}& x_{42} & x_{43}\\
\end{array}\right)\\
&\qquad\qquad =
 \rho^{\tr,4}_3\circ \rho^{\tr,4}_2 
  \left( \begin{array}{ccc} 
x_{21}\\
x_{21}x_{31}& x_{21}x_{31}x_{32}\\
x_{21}x_{31}x_{41}& x_{42} & x_{43}\\
\end{array}\right)\\
&\qquad\qquad =
 \rho^{\tr,4}_3
  \left( \begin{array}{ccc} 
x_{21}\\
\frac{x_{21}x_{32}x_{41}}{x_{32}+x_{41}}& x_{21}x_{31}x_{32}\\
x_{21}x_{31}x_{41}& x_{21}x_{31}x_{42}(x_{32}+x_{41}) & x_{43}\\
\end{array}\right)\\
&\qquad\qquad =
   \left( \begin{array}{ccc} 
\frac{x_{32}x_{41}}{x_{32}+x_{41}}\\[2pt] 
\frac{x_{21}x_{32}x_{41}}{x_{32}+x_{41}}& x_{21}x_{31}x_{32}\\
x_{21}x_{31}x_{41}& x_{21}x_{31}x_{42}(x_{32}+x_{41}) & x_{21}x_{31}x_{42}x_{43}(x_{32}+x_{41}) \\
\end{array}\right).   
\end{align*}

\medskip

\smallskip 
 
For a triangular array $X=(x_{ij}, \, 1\leq j< i \leq n)$  define  
$$\E^\tr(X)=\frac{1}{x_{21}}+\sum_{j\leq i-1} \frac{x_{i-1,j}+x_{i,j-1}}{x_{ij}},
$$
with the convention that $x_{i0}=x_{ii}=0$ for $i=1,\dotsc,n$.
Here is  the analogue of Theorem \ref{bi} for   triangular arrays. 
\begin{theorem}\label{thmtr2}
Let $W_n=(w_{ij},\,\, 1\leq j<i \leq n\,)$ with $ w_{ij}\in \mathbb{R}_{>0}$.   Then the output array $T_n=T^\tr_n(W)$ satisfies
\begin{eqnarray}\label{thm022}
\E^\tr(T_n)=\sum_{1\leq j<i\leq n}\frac{1}{w_{ij}}.
\end{eqnarray}
\end{theorem}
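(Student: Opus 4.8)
The plan is to imitate the inductive proof of Theorem~\ref{bi}, following the energy $\E^\tr$ as the local moves composing $R^\tr_n$ are applied one at a time. I would induct on $n$; the base case $n=2$ is immediate from \eqref{deftrn2}, since the only summand of $\E^\tr(T^\tr_2(w_{21}))$ besides $1/x_{21}=1/w_{21}$ is the $(2,1)$-term $(x_{11}+x_{20})/x_{21}$, which vanishes under the conventions $x_{ii}=x_{i0}=0$. For the step I would set $S=T^\tr_{n-1}(W_{n-1})$ and $T^{(0)}=\bigl(\begin{smallmatrix}S\\ w_{n1}\ \ldots\ w_{n,n-1}\end{smallmatrix}\bigr)$, so that $T^\tr_n(W)=R^\tr_n(T^{(0)})=\rho^{\tr,n}_{n-1}\circ\cdots\circ\rho^{\tr,n}_1(T^{(0)})$, put $T^{(k)}=\rho^{\tr,n}_k\circ\cdots\circ\rho^{\tr,n}_1(T^{(0)})$, and for $0\le k\le n-1$ introduce
\[
\E^{\tr,n;k}(X)=\frac{1}{x_{21}}+\sum{}^{(k)}\frac{x_{i-1,j}+x_{i,j-1}}{x_{ij}}+\sum_{j=k+1}^{n-1}\frac{1}{x_{nj}},
\]
where $\sum^{(k)}$ runs over $(i,j)$ with $j<i<n$, together with $i=n$ and $j\le k$ (conventions $x_{ii}=x_{i0}=0$). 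Then $\E^{\tr,n;0}(T^{(0)})=\E^\tr(S)+\sum_{j=1}^{n-1}1/w_{nj}$, which by the induction hypothesis equals $\sum_{1\le j<i\le n}1/w_{ij}$, while $\E^{\tr,n;n-1}=\E^\tr$; so it would be enough to prove the step identity $\E^{\tr,n;k}\circ\rho^{\tr,n}_k=\E^{\tr,n;k-1}$ for $1\le k\le n-1$ and then telescope.

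For $1\le k\le n-2$ one has $\rho^{\tr,n}_k=\rho^n_k=b_{n-k+1,1}\circ\cdots\circ b_{nk}\circ r_k$, and every entry this map reads or writes lies strictly below the main diagonal (all indices $(i,j)$ involved satisfy $i>j$), so the vanishing diagonal plays no role; the step identity then follows from exactly the same chain of elementary relations used in Theorem~\ref{bi} --- the corner relation $(x'_{n,k-1}+x'_{n-1,k})/x'_{nk}=1/x_{nk}$ coming from $b_{nk}\circ r_k$, the $\E_0$-invariance \eqref{b-inv} of the interior $b_{ij}$'s, and the boundary relation for the topmost position $(n-k+1,1)$ --- the only difference being that the distinguished term is $1/x_{21}$ rather than $1/x_{11}$, and this term is untouched when $k\le n-2$.

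The substance of the argument is the case $k=n-1$, where $\rho^{\tr,n}_{n-1}=b^{\tr,n}_{2,1}\circ\cdots\circ b^{\tr,n}_{n,n-1}\circ r^\tr_{n,n-1}$ runs along the super-diagonal $\{(i,i-1)\}$, which abuts the vanishing diagonal --- exactly why the modified maps $b^{\tr,n}_{i,i-1}$ and $r^\tr_{n,n-1}$, with the altered conventions $x_{i0}=x_{n+1,n-1}=1$, are used here. Writing $Y=\rho^{\tr,n}_{n-1}(X)$, I would note that $X$ and $Y$ agree except at the ``ratio'' positions $(n-2k,n-2k-1)$, $0\le k\le\fl{n/2}-1$, that there $b^{\tr,n}_{i,i-1}$ sets $Y_{i,i-1}=x_{i+1,i-1}\,x_{i,i-2}/x_{i,i-1}$ by \eqref{bii-1} (with the preliminary $r^\tr$ inverting the corner entry $x_{n,n-1}$ when $i=n$), and that the only terms of $\E^\tr$ meeting such a position are: its own $(i,i-1)$-term, which equals $x_{i,i-2}/x_{i,i-1}$ since the other numerator entry is a diagonal $0$; the $(i+1,i-1)$-term $x_{i,i-1}/x_{i+1,i-1}$ when $i<n$; the explicit $1/x_{21}$ when $i=2$; and at $i=n$ the term $1/x_{n,n-1}$ of $\E^{\tr,n;n-2}$, which $r^\tr$ together with $b^{\tr,n}_{n,n-1}$ converts into the $(n,n-1)$-term $Y_{n,n-2}/Y_{n,n-1}=1/x_{n,n-1}$ of $\E^\tr$. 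The step identity at $k=n-1$ then reduces, at each ratio position, to the one-line check $x_{i,i-2}/Y_{i,i-1}+Y_{i,i-1}/x_{i+1,i-1}=x_{i,i-2}/x_{i,i-1}+x_{i,i-1}/x_{i+1,i-1}$, with the first summand read as $1/x_{21}$-type when $i=2$ and the second summand suppressed when $i=n$; no other entry is disturbed, so all remaining terms of $\E^\tr$ are unchanged.

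The hard part will be precisely this $k=n-1$ verification: keeping straight the three sets of conventions that coexist here --- $x_{ii}=x_{i0}=0$ inside $\E^\tr$, $x_{i0}=x_{n+1,n-1}=1$ inside the maps $b^{\tr,n}_{i,i-1}$, and the extra inversion carried out by $r^\tr_{n,n-1}$ --- and confirming that the alternation of the ratio maps with the identity maps $b^{\tr,n}_{n-2k+1,n-2k}$ along the last super-diagonal makes the energy contributions telescope cleanly, with the distinguished term $1/x_{21}$ absorbing the boundary contribution at $(2,1)$ when $n$ is even (its role passing one position up the super-diagonal when $n$ is odd). Once the step identity holds for all $k$, the telescoping $\E^\tr(T^\tr_n(W))=\E^{\tr,n;n-1}(T^{(n-1)})=\cdots=\E^{\tr,n;0}(T^{(0)})$ completes the induction.
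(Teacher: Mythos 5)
Your proposal is correct and follows essentially the same route as the paper's proof: the same induction on $n$, the same interpolating energies $\E^{\tr,n;k}$, the appeal to the proof of Theorem~\ref{bi} for $k\le n-2$, and the same reduction of the $k=n-1$ step to the two-term swap $x_{i,i-2}/x'_{i,i-1}+x'_{i,i-1}/x_{i+1,i-1}=x_{i,i-2}/x_{i,i-1}+x_{i,i-1}/x_{i+1,i-1}$ along the super-diagonal, with the boundary adjustments at $(2,1)$ and $(n,n-1)$ handled exactly as in the paper.
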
  

\begin{proof}

 We will show that 
$$\E^{\tr}(T_n)=\E^{\tr}(T^\tr_{n-1}(W_{n-1}))+\sum_{j=1}^{n-1}\frac{1}{w_{nj}}.
$$
To this end, let  $T^{0}=T^\tr_{n-1}(W_{n-1})$  and  $T^{k}=\rho^{\tr,n}_k\circ\cdots \circ\rho^{\tr,n}_1(T^\tr_{n-1}(W_{n-1}))$ for $k=1,2,\dotsc,n-1$.  For a triangular array $X$  define
$$\E^{\tr,n,k}(X)=\frac{1}{x_{21}}+\sum_{i,j}^{(k)}\frac{x_{i-1,j}+x_{i,j-1}}{x_{ij}}+\sum_{j=k+1}^{n-1}\frac{1}{x_{ij}},
$$
where summation $\sum_{ij}^{(k)}$ is over all indices $(i,j)$ such that $1\leq j< i \leq n$, but
 $(i,j)\neq (n,k+1),\dotsc,(n,n-1)$.  The boundary conventions $x_{i0}=x_{ii}=0$   are still in force.
  We will show that 
 $$\E^{\tr,n,k}(T^k)=\E^{\tr,n,k-1}(T^{k-1})\qquad \text{for} \,\, k=1,2,\dotsc,n-1, 
 $$
 and this will conclude the proof. Notice that for $k=1,2,\dotsc,n-2$   this fact   is already included in the proof of Theorem \ref{bi}, since $\rho^{\tr,n}_i=\rho^n_i$ for $i\leq n-2$. To check the case $k=n-1$, let   $X=T^{n-2} $
 and   $X'=\rho^{\tr,n}_{n-1}(X)=T^{n-1}$.  Since  $\rho^{\tr,n}_{n-1}$ alters only the elements $x_{i,i-1}, i=2,\dotsc,n$, and leaves the rest unchanged, 
 \begin{align}
 \E^{\tr,n,n-1}(X')&=\E^{\tr,n,n-1}(\rho^{\tr,n}_{n-1}(X))
 = \frac{1}{x'_{2,1}}+\sum_{j<i}\frac{x'_{i-1,j}+x'_{i,j-1}}{x'_{ij}}\nn\\
 &= \tilde\sum_{ j<i-1}\frac{x_{i-1,j}+x_{i,j-1}}{x_{ij}}\nn\\
 &\quad + \; \frac{1}{x'_{2,1}} +  \frac{x'_{2,1}}{x_{3,1}}+
  \sum_{i=3}^{n-1} \biggl(   \frac{x_{i,i-2}}{x'_{i,i-1}} + \frac{x'_{i,i-1}}{x_{i+1,i-1}}  \biggr)  +\frac{x_{n,n-2}}{x'_{n,n-1}}
  \label{row11}  
 %\left(\frac{1}{x'_{2,1}}+\frac{x'_{2,1}}{x'_{3,1}}+\frac{x'_{3,1}}{x'_{32}}+\cdots +\frac{x'_{n-1,n-2}}{x'_{n,n-2}}+\frac{x'_{n,n-2}}{x'_{n,n-1}}\right),
 \end{align}
 where in the summation $\tilde\sum$ we set appearances of terms $x_{i,i-1}, i=2,\dotsc,n$, equal to zero. 
 Consider the three parts of line \eqref{row11}.   

First  
 $$\frac{1}{x'_{21}}+\frac{x'_{21}}{x_{31}}=\frac{1}{x_{21}}+\frac{x_{21}}{x_{31}}
 $$
 because either  
  $n$ is odd and  $x'_{21}=x_{21}$, or  $n$ is even and $x'_{21}=x_{31}/x_{21}$. 
The middle terms satisfy 
\[   \frac{x_{i,i-2}}{x'_{i,i-1}} + \frac{x'_{i,i-1}}{x_{i+1,i-1}} 
  =     \frac{x_{i,i-2}}{x_{i,i-1}} + \frac{x_{i,i-1}}{x_{i+1,i-1}}\,, 
  \]   
either   by virtue of \eqref{bii-1} if  $i=n-2k$, or because $x'_{i,i-1}=x_{i,i-1}$   
when $i=n-2k+1$.    Finally,  
  \begin{eqnarray*}
 \frac{x'_{n,n-2}}{x'_{n,n-1}}=\frac{1}{x_{n,n-1}}
 \end{eqnarray*}
 by \eqref{bii-1} and the definition of $r^\tr_{n,n-1}$.   
 Making these substitutions on  line \eqref{row11}  converts  $ \E^{\tr,n,n-1}(X')$
 into $\E^{\tr,n,n-2}(X)$ and completes the proof. 
\end{proof}

The following theorem states the volume preserving property of the map $T^\tr_n$.
It follows from the volume preservation  of the individual steps in \eqref{bktr}.
\begin{theorem}\label{thmtr1}
Let   $W=(w_{ij},\,\, 1\leq j < i \leq n)\in(R_{>0})^{n(n-1)/2}$ as above,  and 
consider 
  the mapping $W\mapsto T^\tr_n(W)= (t_{ij},\,1\leq j < i\leq n)$.
 In logarithmic variables
$$ (\log w_{ij},\,1\leq j < i\leq n) \mapsto  (\log t_{ij},\,1\leq j < i\leq n)
$$ 
has Jacobian equal to  $\pm 1$.
\end{theorem}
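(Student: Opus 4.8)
The plan is to induct on $n$, exploiting the recursive definition \eqref{defttr} together with the fact that every elementary factor making up $R^\tr_n$ has Jacobian $\pm 1$ in logarithmic variables. The base case $n=2$ is immediate, since $T^\tr_2$ is the identity by \eqref{deftrn2}.

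For the inductive step, write $T^\tr_n=R^\tr_n\circ E_n$, where $E_n$ is the self-map of $(\R_{>0})^{n(n-1)/2}$ that applies $T^\tr_{n-1}$ to the top sub-triangle $X_{n-1}=(x_{ij},\,1\le j<i\le n-1)$ and leaves the bottom row $(x_{n1},\dots,x_{n,n-1})$ unchanged. By the induction hypothesis, $E_n$ has Jacobian $\pm 1$ in logarithmic variables, so it suffices to prove the same for $R^\tr_n=\rho^{\tr,n}_{n-1}\circ\cdots\circ\rho^{\tr,n}_1$, and for that it is enough to treat each factor $\rho^{\tr,n}_j$ separately.

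When $1\le j\le n-2$ we have $\rho^{\tr,n}_j=\rho^n_j$, which is a composition of maps $l_{ij}$; the entries such a map reads or modifies all lie on the triangle, so each $l_{ij}$ is of one of the two elementary forms appearing in the proof of Theorem \ref{vp}, both of which have Jacobian $\pm 1$ in logarithmic variables. The remaining factor is $\rho^{\tr,n}_{n-1}=b^{\tr,n}_{2,1}\circ\cdots\circ b^{\tr,n}_{n,n-1}\circ r^\tr_{n,n-1}$. The map $r^\tr_{n,n-1}$ sends $x_{n,n-1}$ to $1/x_{n,n-1}$, which in logarithmic variables negates one coordinate and fixes the rest, hence has Jacobian $-1$. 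Each nontrivial $b^{\tr,n}_{n-2k,n-2k-1}$ replaces the single entry $x_{n-2k,n-2k-1}$ by the right-hand side of \eqref{bii-1}; in logarithmic variables this sends $\log x_{n-2k,n-2k-1}$ to $\log x_{n-2k+1,n-2k-1}+\log x_{n-2k,n-2k-2}-\log x_{n-2k,n-2k-1}$ and leaves every other coordinate fixed, so its Jacobian matrix is triangular with all diagonal entries $1$ except a single $-1$, giving Jacobian $-1$; the maps $b^{\tr,n}_{n-2k+1,n-2k}$ are identities. Composing these factors shows that $\rho^{\tr,n}_{n-1}$, hence $R^\tr_n$, hence $T^\tr_n$, has Jacobian $\pm 1$ in logarithmic variables.

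I do not expect a genuine obstacle here: the argument is bookkeeping rather than analysis. The one point that needs a little care is to confirm that each $b^{\tr,n}_{n-2k,n-2k-1}$ reads (without changing) only the entries $x_{n-2k+1,n-2k-1}$ and $x_{n-2k,n-2k-2}$, which lie strictly below the subdiagonal and are therefore genuinely untouched by the other factors of $\rho^{\tr,n}_{n-1}$, so that the Jacobian of each factor really is the triangular matrix described above. One should also keep track of the boundary conventions $x_{i0}=x_{n+1,n-1}=1$ in the endpoint cases $k=0$ and, for even $n$, $k=\frac{n}{2}-1$, but these merely turn some of the factors in \eqref{bii-1} into $1$ and do not affect the structure of the proof.
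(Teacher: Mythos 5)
Your proof is correct and follows exactly the route the paper intends: the paper disposes of this theorem in one sentence ("it follows from the volume preservation of the individual steps in \eqref{bktr}"), and your write-up simply supplies the details of that argument — induction via the recursion \eqref{defttr}, reduction to the factors $\rho^{\tr,n}_j$, and the observation that $r^\tr_{n,n-1}$ and each nontrivial $b^{\tr,n}_{n-2k,n-2k-1}$ is, in logarithmic variables, a linear map with triangular Jacobian of determinant $-1$ because the entries it reads lie off the subdiagonal and are untouched by the other factors. Nothing further is needed.
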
 
%\begin{proof}
%We will prove the statement by induction. The case $n=2$ follows immediately, recalling \eqref{deftrn2}. By definition \eqref{defttr} of $T^\tr_n$ we have that
%\begin{eqnarray*} T^\tr_n(W_n)&=&R^\tr_n   \begin{pmatrix} T^\tr_{n-1}(W_{n-1})  \\ w_{n1}\ \ldots\ w_{n-1,n-1} \end{pmatrix}\\ &=&\rho^{\tr,n}_{n-1}\circ \cdots \circ \rho^{\tr,n}_1 \begin{pmatrix} T^\tr_{n-1}(W_{n-1})  \\ w_{n1}\ \ldots\ w_{n-1,n-1} \end{pmatrix}. \end{eqnarray*}
%\eqref{bktr} and the fact that Bender-Knuth transformations are volume preserving in logarithmic variables \note{maybe we should check this in the corresponding section} brings us to the situation where we only need to check that the mapping $\rho^{\tr,n}_{n-1}$ (viewed in logarithmic variables)  has Jacobian $\pm 1$. This is now an easy consequence of the second equation in \eqref{bii-1}, which implies that  $\log t_{n-2k,n-2k-1}=\log x_{n-2k+1,n-2k-1} +\log x_{n-2k,n-2k-2} -\log x_{n-2k,n-2k-1},$ while $\log t_{n-2k,n-2k-1}= \log x_{n-2k,n-2k-1}$. \end{proof}

Consider a triangular array $W=(w_{ij},\,\,1\leq j<i\leq n)$ and the 
%\note{decide on terminology} 
output pattern $P^\tr=T^\tr_n(W)=(t_{ij},\, 1\leq j<i \leq n)$.  	
The {\sl shape} of the pattern $P^\tr$ is defined as
\begin{eqnarray*}
sh\, P^\tr &=&sh\, T^\tr_n(W)= (t_{n,n-1},t_{n-1,n-2},\dotsc,t_{21})\\
%&=&(T^\tr_n(W^{bd})_{n,n-1},T^\tr_n(W^{bd})_{n-1,n-2},\dotsc,T^\tr_n(W^{bd})_{2,1})
\end{eqnarray*}
%Notice that this definition is seemingly different than the standard one which would be the bottom row of the output, i.e.
%$(t_{nn},\dotsc, t_{11})$. This is due to the fact that we have the {\it pattern collapse}, as this is captured by \eqref{thm021}, i.e.$$t_{n-2k,n-2k}=t_{n-2k-1,n-2k-1}=t_{n-2k,n-2k-1},\qquad k=0,1,2,\dotsc$$ 

\begin{figure}[t]
 \begin{center}
\begin{tikzpicture}[scale=.7,rotate=90]
\draw[help lines] (1,1) grid (8,8);
\draw [dashed] (1,1) -- (8,8);
\draw[very thick] (1,2)--(1,3)--(1,4)--(1,5)--(2,5)--(2,6)--(2,7)--(3,7)--(3,8)--(4,8)--(5,8)--(6,8)--(7,8);
\draw[very thick] (2,3)--(2,4)--(3,4)--(3,5)--(4,5)--(4,6)--(4,7)--(5,7)--(6,7);
\draw [fill] (1,2) circle [radius=0.1];
\draw [fill] (2,3) circle [radius=0.1];
\draw [fill] (7,8) circle [radius=0.1];
\draw [fill] (6,7) circle [radius=0.1];
\draw [white, fill=white] (0.9,0.9)  -- (8.1,8.1) -- (8.1,.9) -- (.9,.9);

\end{tikzpicture}

\end{center}  
\caption{\small A pair $(\pi_1,\pi_2)\in\Pi^{(2)}_8$. We have used matrix representation, as opposed to Cartesian coordinates. On the upper left the paths begin at  $(2,1)$ and
$(3,2)$.  On the lower right the paths end at  $(8,7)$ and $(7,6)$.  The diagonal (dashed line) runs from $(1,1)$ to $(8,8)$. } \label{d:fig}
\end{figure}
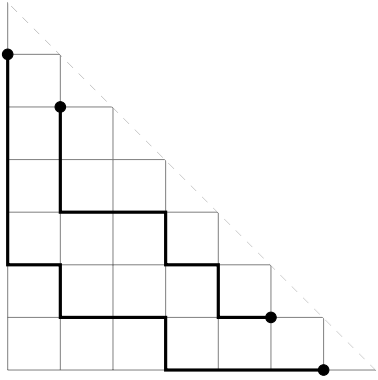

Our next goal is to relate the shape   to ratios of partition functions. 
  Let $\Pi^{(r)}_n$ be the collection of $r$-tuples of non-intesecting
nearest-neighbor lattice paths $\pi_1,\dotsc,\pi_r$ that start 
at positions $(2,1), (3,2),\dotsc, (r+1,r)$, end at positions
$(n,n-1), (n-1,n-2), \dotsc, (n-r+1, n-r)$, and stay strictly below the
diagonal in the matrix picture, i.e. never leave the set $\{(i,j): 1\le j<i\le n\}$.  See Figure \ref{d:fig}.  
Naturally $1\le r\le n/2$.  
Denote the partition sums by
\begin{eqnarray}\label{parttr}
z_r = \sum_{(\pi_1,\ldots,\pi_r)\in\Pi^{(r)}_{n}} \prod_{(i,j)\in \pi_1\cup\cdots\cup\pi_r} w_{ij}. 
\label{d:Z1}
\end{eqnarray}
The definition includes the case of a path
consisting of a single point, which happens when $n$ is even and $r=n/2$.
  
The next theorem states that the odd coordinates of the shape vector $sh \,P^\tr$ are given by  ratios of partition functions. 
\begin{theorem}\label{shape-of-tr}
Consider a triangular array $W=(w_{ij},\,\,1\leq j<i\leq n)\in(\R_{>0})^{n(n-1)/2}$, the output pattern $P^\tr=T^\tr_n(W)=(t_{ij},\,\,1\leq j<i\leq n)$ and the partition functions $z_r,\,\,r=1,2,\dotsc,\fl{n/2}$ as defined in \eqref{parttr}. Then 
\begin{eqnarray*}
(t_{n,n-1}, t_{n-2,n-3},\dotsc,t_{n-2\fl{n/2}+2,n-2\fl{n/2}+1})
=(z_1, z_2/z_1, \dotsc , z_{\fl{n/2}}/z_{\fl{n/2}-1}). 
%\label{d:sh1}
\end{eqnarray*}
\end{theorem}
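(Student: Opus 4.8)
The plan is to rewrite the assertion in the equivalent telescoped form
\[
\prod_{j=1}^{r} t_{n-2j+2,\,n-2j+1} \;=\; z_r ,\qquad 1\le r\le \fl{n/2},
\]
and to prove it by induction on $n$, in parallel with the path representation \eqref{path1} for the ordinary gRSK map. It is probably cleanest to induct on a somewhat stronger statement, namely a full triangular analogue of \eqref{path1} expressing suitable products of entries of $T^\tr_n(W)$ as partition functions of non-intersecting, strictly-below-the-diagonal path ensembles --- direct inspection of the $n=3,4$ formulas above, for instance, suggests that every entry $t_{i\ell}$ with $i\ge 3$ is itself the partition function of up/right paths from $(2,1)$ to $(i,\ell)$ confined to $\{j<i\}$ --- with the identity of the theorem as a specialization. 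The base cases $n=2,3$ are read off directly from \eqref{deftrn2} and the explicit form of $T^\tr_3$ displayed above.

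For the inductive step one first establishes the corresponding recursion for the partition functions. Fix $1\le r\le\fl{n/2}$ and take $(\pi_1,\dots,\pi_r)\in\Pi^{(r)}_n$, with $\pi_1$ joining $(2,1)$ to $(n,n-1)$. Since the remaining sinks $(n-1,n-2),\dots,(n-r+1,n-r)$ lie in rows $<n$ and the paths are up/right monotone and non-intersecting, only $\pi_1$ can enter row $n$; writing $(n-1,c)$ for its last site in row $n-1$, the path $\pi_1$ splits into a portion confined to the $(n-1)$-triangle joining $(2,1)$ to $(n-1,c)$, followed by a (possibly empty) horizontal excursion $(n,c),(n,c+1),\dots,(n,n-1)$ of weight $w_{n,c}\cdots w_{n,n-1}$. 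Summing over $c$ and over the joint configuration of this portion together with $\pi_2,\dots,\pi_r$ inside the $(n-1)$-triangle produces a bilinear, transfer-matrix type recursion expressing the $n$-triangle partition sums in terms of the $(n-1)$-triangle ones and the adjoined weights $w_{n,1},\dots,w_{n,n-1}$; this is the geometric analogue of the Lindstr\"om--Gessel--Viennot bookkeeping behind \eqref{path1}, the new feature being that the column $c$ is unconstrained, so the portion of $\pi_1$ in the smaller triangle has a free endpoint along its bottom row.

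It then remains to verify that $T^\tr_n(W)$ satisfies the matching recursion. By \eqref{defttr}--\eqref{defrtr}, $T^\tr_n(W)$ is obtained by applying $R^\tr_n=\rho^{\tr,n}_{n-1}\circ\rho^{n}_{n-2}\circ\cdots\circ\rho^{n}_1$ to $T^\tr_{n-1}(W_{n-1})$ with the row $w_{n,1},\dots,w_{n,n-1}$ adjoined below; all but the last factor are literally the ordinary gRSK insertion steps, so together with the induction hypothesis they build into the intermediate array exactly the $(n-1)$-triangle partition sums and the accumulated row-$n$ excursion weights --- here one re-uses, essentially verbatim, the local identities already verified in the proof of Theorem \ref{thmtr2}. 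The last factor $\rho^{\tr,n}_{n-1}=b^{\tr,n}_{2,1}\circ\cdots\circ b^{\tr,n}_{n,n-1}\circ r^\tr_{n,n-1}$, which by \eqref{bii-1} alters only the sub-diagonal entries $x_{i,i-1}$, is precisely what implements the reflection of paths off the hard wall $\{j=i\}$ and collapses the free-endpoint sum; using \eqref{bii-1} and treating even and odd $n$ separately (as in Theorem \ref{thmtr2} and Lemma \ref{lem-sym}(b)) one checks that the relevant products telescope exactly as the partition-function recursion demands, the degenerate single-site path occurring for even $n$ and $r=n/2$ being absorbed because the last map applied is then $b^{\tr,n}_{2,1}$.

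I expect the main obstacle to be this last step: reconciling the $\fl{n/2}$-dependent alternation between the nontrivial maps $b^{\tr,n}_{n-2k,n-2k-1}$ and the intervening identity maps with the ``folding'' of the below-diagonal path ensembles at the diagonal, and making the resulting parity-dependent telescoping rigorous --- the same kind of delicate case split that already surfaces in Lemma \ref{lem-sym}(b). A conceptually cleaner, though logically heavier, alternative would be to sidestep the direct combinatorics: realize $T^\tr_n$ as a degeneration of the symmetric gRSK map $T^{n,n}$ acting on symmetric matrices whose diagonal entries tend to zero --- the $\zeta\to\infty$ scaling anticipated in Proposition \ref{lot} --- and then pass the symmetric path representation \eqref{path1} to the limit, in which the vanishing diagonal weights together with the reflection symmetry force the surviving non-intersecting configurations to be precisely the hard-wall ensembles $\Pi^{(r)}_n$.
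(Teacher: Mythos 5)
Neither of your two routes is carried to completion, and the one you develop in detail has a genuine gap at precisely the point you flag, plus a false auxiliary claim. The strengthened induction hypothesis you propose --- that every entry $t_{i\ell}$ with $i\ge 3$ is itself a hard-wall partition function from $(2,1)$ to $(i,\ell)$ --- already fails for $n=4$: the displayed computation gives $t_{31}=x_{21}x_{32}x_{41}/(x_{32}+x_{41})$, a ratio rather than a polynomial in the weights. Only certain \emph{products} of entries (such as the telescoped products $\prod_{j\le r} t_{n-2j+2,\,n-2j+1}$ in the statement) are partition functions, so the induction would have to be formulated on the correct family of minor-like products, which you do not identify. Relatedly, your transfer-matrix recursion for $z_r$ leaves the portion of $\pi_1$ in the $(n-1)$-triangle with a \emph{free} endpoint along row $n-1$, whereas the inductive data coming from $T^\tr_{n-1}(W_{n-1})$ encodes ensembles with \emph{fixed} sinks; reconciling these, together with the parity-dependent action of $\rho^{\tr,n}_{n-1}$, is exactly the unproved step, and it is not a routine re-use of the computations in Theorem \ref{thmtr2}.

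Your one-sentence ``alternative'' is in fact the proof the paper gives: take the symmetric $n\times n$ matrix $W^\varen_n$ with diagonal entries $\varen$, and combine the path representation \eqref{path1} with the $\varen\searrow0$ expansion of Proposition \ref{lot}. The essential ingredient you do not supply is the pair of reflection identities \eqref{vz1}--\eqref{vz2}: for the symmetric ensemble the partition sums $v_r$ of \eqref{v1} satisfy $v_{2k}=\bigl(\prod_{i=1}^k w_{ii}w_{n-i+1,n-i+1}\bigr)z_k^2+V(2k+1)$ and $v_{2k-1}=\bigl(\prod_{i=1}^k w_{ii}w_{n-i+1,n-i+1}\bigr)2z_{k-1}z_k+V(2k+1)$, obtained by classifying the non-intersecting tuples according to how often they touch the diagonal and folding the off-diagonal configurations into mirror pairs of hard-wall ensembles. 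Setting $w_{ii}=\varen$ and using $x_i=v_i/v_{i-1}$ from \eqref{d:x1} then yields $x_{2k}=\tfrac12 z_k/z_{k-1}+o(1)$ and $x_{2k+1}=2\varen^2 z_{k+1}/z_k+o(\varen^2)$, and matching these asymptotics against \eqref{loteq} and the collapse rule \eqref{collapse} identifies the odd shape coordinates of $T^\tr_n(W)$ with the ratios $z_k/z_{k-1}$. Without these leading-order identities the degeneration argument does not close, so as written the proposal does not constitute a proof by either route.
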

 The proof of this theorem will be presented after Proposition \ref{lot} below. 
%To proceed to this, we need to introduce some notation. 
Define an operator  $
\Lambda^\varen_n$ acting on $n\times n $ matrices $W$ by  
\begin{eqnarray*}
w_{ij} &\mapsto& \varen w_{ij}, \qquad\qquad\qquad\qquad\,\,\, i\neq j\\
w_{n-2i,n-2i}&\mapsto&2 \varen^2 w_{w_{n-2i,n-2i}}, \,\,\qquad\qquad i=0,1,\dotsc,\fl{n/2}-1\\
w_{n-2i+1,n-2i+1} &\mapsto&\frac{1}{2} w_{n-2i+1,n-2i+1}, \qquad\,\,\,\,\,\,\,\, i=1,2,\dotsc,\fl{n/2}-1,
\end{eqnarray*}
and $w_{11}\mapsto \frac{1}{2}w_{11}$, if $n$ is even, while $w_{11}\mapsto \varen w_{11}$, if $n$ is odd.
\vskip 4mm
Let $W_n^\tr=(w_{ij}^{\tr,n},\,\, 1\leq j< i\leq n)$ be a given triangular array.   
Let $W_n^\varepsilon$ be the symmetric  $n\times n$  matrix with $w_{ii}^\varen=\varen$ for $1\le i\le n$ and  $w_{ij}^\varen=w_{ij}^{\tr,n}$ for $1\leq j< i\leq n$.  
   Finally,   denote by $T^\did_n(W_n^\tr)=(w^{\did,n}_{ij},\,\,\, 1\leq i,j\leq n)$ a  symmetric $n\times n$  output matrix whose  lower triangular part $(t_{ij},\,\, 1\leq j< i\leq n)$ agrees with the output array $T^{\tr}_n(W^\tr_n)$, while the diagonal elements $(t_{ii})_{i=1,\dotsc,n}$ are determined by 
\be \label{collapse}\begin{aligned}
&t_{n-2k,n-2k}=t_{n-2k-1,n-2k-1}=t_{n-2k,n-2k-1} \quad \text{for}  \ \ k=0,1,\dotsc\\
\text{and} \quad  &t_{11}=1 \quad\text{if $n$ is odd.}  
\end{aligned}\ee
%\vskip 4mm

%We are now ready to state and prove 
\begin{proposition} \label{lot}
Let $T^{n,n}$ be the geometric RSK mapping on $n\times n$ matrices with positive entries, defined in \eqref{defT}, and $W^\varen_n,\,\Lambda^\varen_n,\, T^\did_n,\,W^\tr_n$  as above. Then, as $\varen\searrow 0$,   
\begin{eqnarray}\label{loteq}
T^{n,n}(W^\varen_n)=\Lambda^\varen_n\circ T^\did_n(W^\tr_n)+  S_n^\varen
\end{eqnarray}
where $S_n^\varen$ is an   $n\times n$ matrix of  lower order terms,
specifically 
\be\label{l.o.t.}\begin{aligned}
(S_n^\varen)_{ij}&=o(\varen) ,  \quad \; \, i\ne j\\ 
  (S_n^\varen)_{n-2i,n-2i}&= o(\varen^2), \quad i=0,1,\dotsc,\fl{n/2}-1\\
  (S_n^\varen)_{n-2i+1,n-2i+1}&= o(1), \;\quad i=1,2,\dotsc,\fl{n/2}-1\\
 (S_n^\varen)_{1,1}&= \begin{cases} o(\varen), &\text{$n$ is odd} \\  o(1),&\text{$n$ is even.}
 \end{cases} 
\end{aligned} \ee
%where the off diagonal elements $(\lot)_{ij}, \,j<i$, are of order $o(\varen)$, $(\lot)_{n-2i,n-2i}= o(\varen^2),\,\,i=0,1,\dotsc$,  $(\lot)_{n-2i+1,n-2i+1}= o(1),\,\,i=1,\dotsc$ and  $(\lot)_{1,1}= o(\varen)$, if $n$ is even and  $(\lot)_{1,1}=o(1)$, if $n$ is odd.
\end{proposition}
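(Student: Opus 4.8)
The plan is to proceed by induction on $n$. The base cases $n=2$ and $n=3$ are checked by direct computation, $T^{n,n}(W^\varen_n)$ from Lemma~\ref{lem-sym}(a) and \eqref{symm4} and $T^\tr_n(W^\tr_n)$ from \eqref{defttr}, exactly as in the worked examples above. For the inductive step, assume the assertion for $n-1$. The $(n-1)\times(n-1)$ principal submatrix of $W^\varen_n$ is the symmetric matrix built from $W^\tr_{n-1}$ in the sense of the statement, so the induction hypothesis reads $T^{n-1,n-1}(W^\varen_{n-1})=\Lambda^\varen_{n-1}\circ T^\did_{n-1}(W^\tr_{n-1})+S^\varen_{n-1}$ with $S^\varen_{n-1}$ of the orders in \eqref{l.o.t.}. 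By Lemma~\ref{lem-sym}(a),
$$T^{n,n}(W^\varen_n)=R^{n,n}_n\begin{pmatrix} S\\ w^{\tr,n}_{n1}\ \ldots\ w^{\tr,n}_{n,n-1}\ \varen\end{pmatrix},\qquad S=\left[R^{n,n-1}_n\begin{pmatrix} T^{n-1,n-1}(W^\varen_{n-1})\\ w^{\tr,n}_{n1}\ \ldots\ w^{\tr,n}_{n,n-1}\end{pmatrix}\right]^{t},$$
and the outer map $R^{n,n}_n$ acts through the explicit formulas \eqref{symm4}. On the triangular side, \eqref{defttr} writes $T^\tr_n(W^\tr_n)=\rho^{\tr,n}_{n-1}\circ\rho^n_{n-2}\circ\cdots\circ\rho^n_1$ applied to $T^\tr_{n-1}(W^\tr_{n-1})$ with the bottom row $w^{\tr,n}_{n1}\ \ldots\ w^{\tr,n}_{n,n-1}$ appended. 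The key structural fact is that $R^{n,n-1}_n=\rho^n_{n-1}\circ\rho^n_{n-2}\circ\cdots\circ\rho^n_1$ shares all but its last factor with $R^\tr_n$, and that the common factors $\rho^n_1,\dots,\rho^n_{n-2}$ modify only entries $(i,j)$ with $i-j\ge 2$, so they leave the diagonal and the entries above it untouched and act identically on the $n\times(n-1)$ matrix and on its triangular sub-array.

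The heart of the proof is a bookkeeping of the powers of $\varen$ carried by the entries of the working matrix as these maps are applied to the idealized input obtained by replacing $T^{n-1,n-1}(W^\varen_{n-1})$ by $\Lambda^\varen_{n-1}\bigl(T^\did_{n-1}(W^\tr_{n-1})\bigr)$ (the error $S^\varen_{n-1}$ is a relative perturbation of order $o(1)$ entrywise, and such perturbations are preserved by all the local moves at leading order, so it is harmless). I will show, inductively over the factors $\rho^n_1,\dots,\rho^n_{n-2}$, that after each one every entry with $i-j\ge 1$ equals $\varen$ times the corresponding entry of the triangular working matrix up to order $o(\varen)$, that the diagonal entries, being untouched, retain exactly the scalings dictated by $\Lambda^\varen_{n-1}$, and that the untouched entries of the appended row are $\varen$ times a fixed positive quantity. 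The one subtlety is that inside a single factor $\rho^n_j$ one entry can make a transient $O(1)$ excursion before the closing $l_{i1}$-factor restores the $\varen$-scaling; a direct computation, using only the identity $\frac{(\varen a)(\varen b)}{(\varen c)(\varen a+\varen b)}=\frac{ab}{c(a+b)}$ and its variants, shows the excursion value coincides with the corresponding triangular one, so nothing is lost. Hence after $\rho^n_{n-2}\circ\cdots\circ\rho^n_1$ the working matrix agrees, to leading order and entrywise below the diagonal, with $\varen$ times the triangular working matrix.

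The decisive step is the last factor $\rho^n_{n-1}=h_{n-1}\circ r_{n-1}$ of $R^{n,n-1}_n$, which unlike the earlier factors reads the diagonal entries, followed by the outer map $R^{n,n}_n$ via \eqref{symm4}. Going through $h_{n-1}=b_{2,1}\circ\cdots\circ b_{n,n-1}$ one factor at a time, one checks that on the relevant entries the Bender--Knuth step \eqref{bk1} degenerates at leading order according to the parity of $n-j$: when $n-j$ is even, a diagonal entry of order $\varen^2$ is negligible beside the adjacent sub-diagonal entry of order $\varen$, the numerator in \eqref{bk1} loses a term, and $b_{j,j-1}$ reduces to the triangular step $b^{\tr,n}_{j,j-1}$ of \eqref{bktr}--\eqref{bii-1}; when $n-j$ is odd, a diagonal entry of order $1$ makes $1/x_{jj}$ negligible beside $1/x_{j+1,j-1}$ and, the factor $\tfrac12$ carried by that diagonal entry cancelling the factor $2$ of the adjacent one, the net effect is again $\varen$ times the triangular value (consistently with $b^{\tr,n}_{j,j-1}$ being the identity there); and $r_{n-1}$ is literally $r^\tr_{n,n-1}$ since $n-1$ is the last column of the $n\times(n-1)$ matrix. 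Feeding the resulting leading behavior of $S$ into \eqref{symm4}: the relations $t_{ij}=s_{ij}$ for $i<j$ together with the symmetry of $T^{n,n}(W^\varen_n)$ give the off-diagonal scaling by $\varen$; the relation $t_{nn}=2s_{n-1,n}\varen$ gives the factor $2\varen^2$ at the $(n,n)$-entry; the relation $t_{11}=s_{12}/2s_{11}$ gives the factor $\tfrac12$ when $n$ is even and the power $\varen$ when $n$ is odd; and $t_{ii}=s_{i,i+1}s_{i-1,i}/s_{ii}$ for $2\le i\le n-1$, where $s_{ii}$ still carries the $\Lambda^\varen_{n-1}$-scaling $\tfrac12$ or $2\varen^2$, produces exactly the complementary scaling at position $i$. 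Collecting leading coefficients, one recognizes the triangular recursion \eqref{defttr}--\eqref{bktr} together with the collapse rule \eqref{collapse}: this is the asserted identity, with $S^\varen_n$ of the stated orders.

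The step I expect to be the main obstacle is the bookkeeping in the two middle paragraphs: following which entry carries which power of $\varen$ through long compositions of $l$- and $b$-maps whose intermediate values temporarily blow up or collapse, and verifying that at each leading order the surviving coefficients obey the triangular recursion and that the stray factors of $2$ land precisely where $\Lambda^\varen_n$ predicts. Once this leading-order identity and the attendant error bounds are in place, the passage from the idealized input back to the true input $T^{n-1,n-1}(W^\varen_{n-1})$ is routine.
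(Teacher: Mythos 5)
Your proposal follows essentially the same route as the paper: induction on $n$ via the recursion \eqref{recs}, an $\varepsilon$-power bookkeeping through the factors $\rho^n_1,\dots,\rho^n_{n-2}$, the parity analysis of the Bender--Knuth step in $\rho^n_{n-1}$ which reads the mixed-order diagonal entries, and the matching against the triangular recursion and the collapse rule (the paper runs the final diagonal step as $\rho^n_n$ applied to the symmetrized matrix rather than through \eqref{symm4}, but that is the same computation). One small correction to your stated invariant: the not-yet-updated entries of the appended row are the raw $O(1)$ weights $w_{nj}$, not $\varepsilon$ times fixed quantities --- and this is precisely what makes the update $w'_{nk}=w_{nk}(w'_{n,k-1}+t^{\varepsilon}_{n-1,k})$ come out of order $\varepsilon$ rather than $\varepsilon^2$.
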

\begin{proof}
From \eqref{recs} we have this recursion:  
\begin{equation}
T^{n,n}(W^\varen_n) = \rho^n_n\circ (\rho^n_{n-1}\circ\dotsm\circ\rho^n_1) \begin{pmatrix} \left[ R^{n,n-1}_n \begin{pmatrix} T^{n-1,n-1}(W^\varen_{n-1}) \\ 
w_{n1}\ \ldots\ w_{n,n-1} \end{pmatrix} \right]^t \,\\ w_{1n}\ \ldots\ w_{n,n-1} \,\,\,\varen \end{pmatrix} .
\end{equation}
Symmetry of $W^\varen_n$ makes $T^{n,n}(W^\varen_n)$ also symmetric. 
Since   $\rho^n_n$ alters only diagonal elements,   
  the matrix must be symmetric just before the last application
of $\rho^n_n$.  
The mappings $\rho^n_{n-1}\circ\dotsm\circ\rho^n_1$ alter only entries strictly 
below the diagonal. Consequently we can skip the steps 
$\rho^n_{n-1}\circ\dotsm\circ\rho^n_1$ if we simply take the upper triangular
part of the   matrix
just before and extend it to a  symmetric matrix.  We insert one extra transposition
and then keep the lower triangular instead of the upper triangular part. 
 In other words,  let 
 \begin{eqnarray*}
W'=R^{n,n-1}_n \begin{pmatrix} T^{n-1,n-1}(W^\varen_{n-1}) \\ 
w_{n1}\ \ldots\ w_{n,n-1} \end{pmatrix}   \qquad \qquad    \text{(an $n\times(n-1)$ matrix)}  
\end{eqnarray*}
and define the  symmetric matrix $\tilde{W}=\{\tilde{w}_{ij},\, 1\leq i,j\leq n\}$
by 
  $\tilde{w}_{ij}=w'_{ij}$ for $1\leq j\leq i\wedge (n-1) $ and 
   $\tilde{w}_{nn} =\varen$.  Then $T^{n,n}(W^\varen_n) =\rho^n_n(\tilde W)$.  
In particular, the part of $T^{n,n}(W^\varen_n)$ strictly below the diagonal is
already present in $W'$.  

We prove \eqref{loteq}  by  induction on $n$. Case  $n=2$  begins with
$W_2^\tr=(w_{21})$, from which 
\[   \Lambda^\varen_2\circ T^\did_2(W^\tr_2)= 
\begin{pmatrix}  \frac{1}{2}w_{21} & \varen w_{21}\\
\varen w_{21} & 2\varen^2 w_{21}  \end{pmatrix}
=  T^{2,2} \begin{pmatrix}  \varen &  w_{21}\\
  w_{21} & \varen   \end{pmatrix}  =  T^{2,2}(W^\varen_{2}). 
\]
%have that $$T^{2,2}\left( \begin{array}{cc} \varen & x_{12}\\x_{12} & \varen \end{array} \right) = \left( \begin{array}{cc}\frac{1}{2}x_{12} & \varen x_{12}\\ \varen x_{12} & 2\varen^2 x_{12} \end{array} \right). $$

Assume that
$$
T^{n-1,n-1}(W^\varen_{n-1})=\Lambda^\varen_{n-1}\circ T^\did_{n-1}(W^\tr_{n-1})
+S_{n-1}^\varen.  
$$
Abbreviate  $T^\varen=(t^\varen_{ij},\,1\leq i,j\leq n-1)=T^{n-1,n-1}(W^\varen_{n-1})$
so that the induction assumption reads: 
\begin{eqnarray*}
t^\varen_{ij}&=&\varen w^{\did,n-1}_{ij}+o(\varen),\qquad\qquad\qquad\qquad i\neq j\\
t^\varen_{n-2i-1,n-2i-1}&=&2\varen^2 w^{\did,n-1}_{n-2i-1,n-2i-1}+o(\varen^2),\qquad\quad\,\, i=0,1,\dotsc\\
t^\varen_{n-2i,n-2i}&=& \frac{1}{2}w^{\did,n-1}_{n-2i,n-2i}+o(1),\qquad\qquad\qquad\,\, i=1,\dotsc\\
t^\varen_{11}&=&\varen w^{\did,n-1}_{11}+o(\varen),\qquad\qquad\qquad\quad\,\,\,\,\, \text{if}\,\, (n-1) \,\,\text{is odd}\\
                         &=&\frac{1}{2}w^{\did,n-1}_{11}+o(1),\qquad\qquad\qquad\quad\,\,\,\, \text{if}\,\, (n-1) \,\,\text{is even}\\
\end{eqnarray*}

We now perform the mapping  
$$
W' =
\rho^n_{n-1}\circ\cdots\circ \rho^n_1\begin{pmatrix} T^{n-1,n-1}(W^\varen_{n-1}) \\ 
w_{n1}\ \ldots\ w_{n,n-1} \end{pmatrix} 
$$
inductively.  
 Assume that we have applied the transformations  
$$
\rho^n_{k-1}\circ\cdots \circ \rho^n_1,\qquad k<n-1,
$$
and this has resulted in output entries
$$w'_{ij}=\varen w^{\did,n}_{ij}+o(\varen), \qquad 1\leq j < k-(n-i),\,\, n-k+1< i\leq n,
$$
where $w^{\did,n}_{ij}$ denotes the entries of the matrix $T^\did_n(W^\tr_n)$ (recall that the lower triangular part of $T^\did_n(W^\tr_n)$ is identical to $T^\tr_n(W^\tr_n)$). This is readily checked when $k-1=1$. We will show that this is also true after the transformation $\rho^n_k$. To this end, using the relations \eqref{bk} and \eqref{bk1}, we have that
\begin{eqnarray*}
w'_{nk}&=&w_{nk}(w'_{n,k-1}+t^\varen_{n-1,k})=\varen w_{nk}
(w^{\did,n}_{n,k-1}+w^{\did,n-1}_{n-1,k}) +o(\varen)
=\varen w^{\did,n}_{n,k}+o(\varen),\\
 w'_{n-j,k-j}&=&\frac{w'_{n+1-j,k-j}\,t^\varen_{n-j,k-j+1}}{t^\varen_{n-j,k-j}}
                                  \frac{w'_{n-j,k-j-1}+t^\varen_{n-j-1,k-j}}{w'_{n+1-j,k-j}+t^\varen_{n-j,k-j+1}}\\
                   &=& \frac{(\varen w^{\did,n}_{n+1-j,k-j}+o(\varen))\,(\varen w^{\did,n-1}_{n-j,k-j+1}+o(\varen))}{\varen w^{\did,n-1}_{n-j,k-j}+o(\varen)}
                  \,\,  \frac{\varen(w^{\did,n}_{n-j,k-j-1}+w^{\did,n-1}_{n-j-1,k-j})+o(\varen)}{\varen(w^{\did,n}_{n+1-j,k-j}+w^{\did,n-1}_{n-j,k-j+1})+o(\varen)}\\
                  &=& \varen \frac{w^{\did,n}_{n+1-j,k-j}\,w^{\did,n-1}_{n-j,k-j+1}}{w^{\did,n-1}_{n-j,k-j}}\,\,
                      \,\, \frac{w^{\did,n}_{n-j,k-j-1}+w^{\did,n-1}_{n-j-1,k-j}}{w^{\did,n}_{n+1-j,k-j}+w^{\did,n-1}_{n-j,k-j+1}}+o(\varen)\\
                      &=& \varen w^{\did,n}_{n-j,k-j}+o(\varen),    
\end{eqnarray*}
and this verifies the proposition for the above entries. The next step is to confirm that  
$w'_{n-j,n-j-1}=\varen w^{\did,n}_{n-j,n-j-1}+o(\varen)$  for $ j=0,\dotsc,n-2$. To this end, assume that we have performed the transformations $\rho^n_{n-2}\circ\cdots \circ\rho^n_{1}$ and  then we operate with  $\rho^n_{n-1}$. First for $j=0$, 
\begin{eqnarray*}
w'_{n,n-1}&=&w_{n,n-1}(w'_{n,n-2}+t^\varen_{n-1,n-1})\\
                  &=& w_{n,n-1}(\varen w^{\did,n}_{n,n-2}+2\varen^2 w^{\did,n-1}_{n-1,n-1}+o(\varen))\\
                  &=& \varen w_{n,n-1}w^{\did,n}_{n,n-2}+o(\varen)\\
                  &=& \varen w^{\did,n}_{n,n-1}+o(\varen).
\end{eqnarray*}
For $j>0$
\begin{eqnarray*}
w'_{n-j,n-j-1}= \frac{w'_{n+1-j,n-j-1}\,t^\varen_{n-j,n-j}}{t^\varen_{n-j,n-j-1}}\,
                          \frac{w'_{n-j,n-j-2}+t^\varen_{n-j-1,n-j-1}}{w'_{n+1-j,n-j-1}+t^\varen_{n-j,n-j}}.
\end{eqnarray*} 
To develop this  further we   distinguish between    odd and even $j$. For 
even $j$, 
\begin{eqnarray*}
w'_{n-j,n-j-1}&=&\frac{(\varen w^{\did,n}_{n+1-j,n-j-1}+o(\varen) )\,\, (\frac{1}{2} w^{\did,n-1}_{n-j,n-j}+o(1)) }{\varen w^{\did,n-1}_{n-j,n-j-1}+o(\varen)}\\
&&\quad\times \frac{\varen w^{\did,n}_{n-j,n-j-2}+o(\varen) + 2\varen^2 w^{\did,n-1}_{n-j-1,n-j-1}+o(\varen^2)}{\varen w^{\did,n}_{n+1-j,n-j-1}+o(\varen)+\frac{1}{2} w^{\did,n-1}_{n-j,n-j}+o(1)}\\
&=&\varen \frac{w^{\did,n}_{n+1-j,n-j-1}\,\,w^{\did,n}_{n-j,n-j-2}}{w^{\did,n-1}_{n-j,n-j-1}}+o(\varen)\\
&=& \varen w^{\did,n}_{n-j,n-j-1}+o(\varen)
\end{eqnarray*}
where the last step came from \eqref{bii-1}.  In the odd case 
\begin{eqnarray*}
w'_{n-j,n-j-1}&=&\frac{(\varen w^{\did,n}_{n+1-j,n-j-1}+o(\varen) )\,\, (2\varen^2 w^{\did,n-1}_{n-j,n-j}+o(\varen^2)) }{\varen w^{\did,n-1}_{n-j,n-j-1}+o(\varen)}\\
&&\quad\times \frac{\varen w^{\did,n}_{n-j,n-j-2}+o(\varen) + \frac{1}{2} w^{\did,n-1}_{n-j-1,n-j-1}+o(1)}{\varen w^{\did,n}_{n+1-j,n-j-1}+o(\varen)+2\varen^2 w^{\did,n-1}_{n-j,n-j}+o(\varen^2)}\\
&=&\varen \frac{w^{\did,n-1}_{n-j,n-j}\,\,w^{\did,n-1}_{n-j-1,n-j-1}}{w^{\did,n-1}_{n-j,n-j-1}}+o(\varen)\\
&=& \varen w^{\did,n-1}_{n-j,n-j-1}+o(\varen) =  \varen w^{\did,n}_{n-j,n-j-1}+o(\varen).
\end{eqnarray*}
The second last equality follows from the fact that $T^\did_{n-1}(W_{n-1}^\tr)$ satisfies \eqref{collapse} with $n$  replaced by $n-1$.  The last equality 
comes from  the definition of  $b^{\triangle,n}_{n-j,n-j-1}$ as the identity
mapping (see the bullet  below \eqref{bii-1}).  In the case $(n-j,n-j-1)=(2,1)$ we need to distinguish between the case $n$ is even or odd. In the even case we have
\begin{eqnarray*}
w'_{21}&=&t^\varen_{11}\frac{w'_{31}t^\varen_{22}}{t^\varen_{21}(w'_{31}+t^\varen_{22})}\\
&=&(\varen w^{\did,{n-1}}_{11}+o(\varen))\frac{(\varen w^{\did,n}_{31}+o(\varen))(\frac{1}{2}w^{\did,n-1}_{22}+o(1))}{(\varen w^{\did,n-1}_{21}+o(\varen))(\varen w^{\did,n}_{31}+o(\varen)+\frac{1}{2}w^{\did,n-1}_{22}+o(1) ) }\\
&=& \varen w^{\did,{n-1}}_{11} \frac{ w^{\did,n}_{31}}{w^{\did,n-1}_{21}}+o(\varen)= \varen\frac{ w^{\did,n}_{31}}{w^{\did,n-1}_{21}}+o(\varen) =\varen w^{\did,n}_{21}+o(\varen),
\end{eqnarray*}
where the second to last equality follows from \eqref{collapse}, since $(n-1)$ is odd and therefore $w^{\did,{n-1}}_{11} =1$.
The case that $n$ is odd follows similarly.

To complete the construction of $T^{n,n}(W^\varen_n)$,  extend $W'$
to the symmetric matrix  $\tilde W$ as explained above 
and define $W'' =\rho^n_n(\tilde W)$. 
By  computations similar to the ones above  and by  symmetry,   
the  diagonal elements $(w''_{ii})_{i=1,\dotsc,n}$ satisfy 
 $w''_{n-2k,n-2k}=2\varen^2  w^{\did,n}_{n-2k,n-2k-1}$ and $w''_{n-2k-1,n-2k-1}=\frac{1}{2}w^{\did,n}_{n-2k,n-2k-1}$ for $k=0,1,\dotsc$.
The proof is then complete.
\end{proof}

%\vskip 4mm
%We can now prove Theorem \ref{shape-of-tr}.
%\vskip 4mm
\begin{proof}[Proof of Theorem  \ref{shape-of-tr}] 
Consider a symmetric, $n\times n$, matrix, $W^\varen_n$, with diagonal weights, $w_{ii}=\varen,\,\,i=1,2,\dotsc,n$. 
 Let $v_r$ denote the partition sum introduced in \eqref{path1} 
with $k=m=n$: 
\be v_r = \sum_{(\pi_1,\ldots,\pi_r)\in\Pi^{(r)}_{n,n}} \prod_{(i,j)\in \pi_1\cup\cdots\cup\pi_r} w_{ij}.  
\label{v1}\ee
The key observation is the following. For $1\le k\le \fl{n/2}$, 
\be\begin{aligned}
v_{2k}= \Bigl( \;\prod_{i=1}^k w_{ii} w_{n-i+1, n-i+1}\Bigr) z_k^2 + V(2k+1)
\end{aligned}\label{vz1}\ee
and 
\be\begin{aligned}
v_{2k-1}= \Bigl( \;\prod_{i=1}^k w_{ii} w_{n-i+1, n-i+1}\Bigr) 2z_{k-1}z_k + V(2k+1)
\end{aligned}\label{vz2}\ee
where $z_0=1$, $z_r$ is defined by \eqref{d:Z1},  and  the unspecific notation $V(\ell)$ represents any sum of products 
of weights where each term
contains  at least $\ell$ diagonal weights $w_{ii}$.  

To see the origin of  \eqref{vz1}--\eqref{vz2}, 
  consider first $v_1$, the sum of products $\prod_{(i,j)\in \pi} w_{ij}$
over all paths $\pi$ from $(1,1)$ to $(n,n)$.  Those products that 
contain only weights $w_{11}w_{nn}$ from the diagonal
correspond to paths that   stay either strictly 
above or strictly below the diagonal, except at points $(1,1)$ and  $(n,n)$. 
By the symmetry of the weights this gives two copies of $z_1$.  
Similarly for $v_2$,  pairs $(\pi_1,\pi_2)$ that intersect the diagonal
only at $\{(1,1), (n,n)\}$ correspond to pairs such  that $\pi_2$ 
connects $(1,2)$ to $(n-1,n)$ above the diagonal and  $\pi_1$ connects  
$(2,1)$ to $(n,n-1)$ below the diagonal.  Weights of paths are multiplied,
and so symmetry gives $z_1^2$.  The higher cases work the same way.

For the symmetric weight matrix the shape vector
$x=(x_1,\dotsc, x_n)$ is given 
by
\be  x_1=v_1, \quad x_i= z_{n,i}= \frac{v_i}{v_{i-1}} \quad\text{for $2\le i\le n$.} 
\label{d:x1}\ee 
Here we recalled that the shape vector is the bottom row $z_{n\cdot}$ of the $P$ pattern, see \eqref{Ppattern}, and combined \eqref{path1}  with \eqref{tz1}.  

 Since $w_{ii}=\varen$, equations \eqref{vz1}--\eqref{d:x1} 
 combine to give the following asymptotics for
   $k=1,2,\dotsc,\fl{n/2}$  as $\varen\searrow 0$:
 \begin{eqnarray*}
 x_{1}&=&\,\,\,\,v_1\,\,\,\,\,=2\varen^2 z_1+o(\varen^2),\\
 &&\\
 x_{2k}&=&\frac{v_{2k}}{v_{2k-1}}=\frac{\varen ^{2k} z_k^2+o(\varen^{2k})}{\varen^{2k}\,\,2z_{k-1}z_k+o(\varen^{2k})}
 =\frac{1}{2}\frac{z_k}{z_{k-1}}+o(1),\\
 &&\\
 x_{2k+1}&=&\frac{v_{2k+1}}{v_{2k}}=\frac{\varen ^{2(k+1)} \,\,2z_kz_{k+1}+o(\varen^{2(k+1)})}{\varen^{2k}\,z_k^2+o(\varen^{2k})}
 =2\varen^2\,\,\frac{z_{k+1}}{z_k}+o(\varen^2).
 \end{eqnarray*}

The proof can be now completed by comparing to \eqref{loteq} and using \eqref{collapse}.
\end{proof}

For a triangular array $X=(x_{ij},\,1\leq j< i\leq n)\in (\mathbb{R}_{>0})^{n(n-1)/2}$ we define its type, $\tau=(\tau^n_j)_{0\le j\le n-1}=\text{type} \,X$, as the vector  with
entries 
\begin{eqnarray*}
\tau^n_{j}(X)=\frac{D_{nj}(X)}{D_{n,j-1}(X)}    
\end{eqnarray*}
where
\begin{eqnarray*}
D_{n0}(X)&=&1\\
\text{and}\qquad 
D_{nj}(X)&=&x_{nj}x_{n-1,j-1}\cdots x_{n-j+1,1}  ,\qquad j=1,2,\dotsc,n-1.
\end{eqnarray*}
\begin{proposition}\label{proptr}
Let  $W_n=(w_{ij},\,1\leq j< i\leq n)$ with $w_{ij}\in \mathbb{R}_{>0}$. We have
\begin{eqnarray}\label{trtype}
\tau^n_{j}(T^\tr_n(W_n))= \prod_{\ell=1}^{j-1} w_{j,\ell} \prod_{k=j+1}^{n} w_{kj},\qquad 1\leq j\leq n-1.
\end{eqnarray}
\end{proposition}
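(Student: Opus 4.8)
The plan is to obtain \eqref{trtype} from the type identity for ordinary geometric RSK (``$\mbox{type }Q=R$'', recorded in Section \ref{gRSKsec}) together with the degeneration of Proposition \ref{lot}, rather than by a direct induction on local moves. The first step is to translate the triangular type into the type of the $Q$-pattern of a symmetric $n\times n$ matrix. Let $M=(M_{ij})\in(\R_{>0})^{n\times n}$ be symmetric and write $M=(P,Q)$ as in Section \ref{gRSKsec}, so that $P=Q$. By \eqref{tz2}, for $1\le s\le n$ the $s$-th row of $Q$ is the collection of entries $M_{i,\,i+(n-s)}$, $1\le i\le s$, i.e.\ the entries of $M$ on the diagonal $\{(i,j):j-i=n-s\}$; since $M$ is symmetric this is the same multiset as the entries on the subdiagonal $\{(i,j):i-j=n-s\}$, and for $1\le s\le n-1$ the latter are precisely $M_{ns},M_{n-1,s-1},\dots,M_{n-s+1,1}$, with no diagonal entry among them. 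Hence $\rho_s(Q)=D_{n,s}$ evaluated on the strictly lower triangular array $(M_{ij})_{1\le j<i\le n}$ for $1\le s\le n-1$, and therefore the $s$-th coordinate of $\mbox{type }Q$ equals $\tau^n_s$ of that array. Combining this with $\mbox{type }Q=R$: if $M=T^{n,n}(V)$ for a symmetric $V\in(\R_{>0})^{n\times n}$, then $\tau^n_s$ of the strictly lower part of $M$ equals $R_s(V)=\prod_j v_{sj}$ for all $1\le s\le n-1$.

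The second step is to apply this with $V=W_n^\varen$, the symmetric matrix whose strictly lower part is $W_n$ and whose diagonal entries all equal $\varen$, and to let $\varen\searrow 0$. Then $R_s(W_n^\varen)=\varen\prod_{\ell=1}^{s-1}w_{s\ell}\prod_{k=s+1}^{n}w_{ks}$. By Proposition \ref{lot}, the strictly lower part of $T^{n,n}(W_n^\varen)$ satisfies $\big(T^{n,n}(W_n^\varen)\big)_{ij}=\varen\,\big(T^\tr_n(W_n)\big)_{ij}+o(\varen)$ as $\varen\searrow 0$, for each $i>j$. Since $D_{n,s}$ is a product of exactly $s$ such entries, $D_{n,s}$ of the strictly lower part of $T^{n,n}(W_n^\varen)$ equals $\varen^s\big(D_{n,s}(T^\tr_n(W_n))+o(1)\big)$, and hence $\tau^n_s$ of that array equals $\varen\,\big(\tau^n_s(T^\tr_n(W_n))+o(1)\big)$. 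Equating with $R_s(W_n^\varen)$, cancelling $\varen$, and letting $\varen\searrow 0$ gives $\tau^n_s\big(T^\tr_n(W_n)\big)=\prod_{\ell=1}^{s-1}w_{s\ell}\prod_{k=s+1}^{n}w_{ks}$, which is \eqref{trtype}.

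I expect the only delicate point to be the bookkeeping in the first step: one must reconcile the index conventions in \eqref{tz2}, in the definition of the type of a pattern, and in the definition of $D_{nj}$, and check carefully that for $1\le j\le n-1$ the quantity $D_{nj}$ of the strictly lower part of a symmetric $n\times n$ matrix is exactly the (reversed) $j$-th row product of its $Q$-pattern and contains no diagonal entry. A self-contained alternative that avoids Proposition \ref{lot} is a direct induction on $n$ via the recursion \eqref{defttr}: one observes that $\rho^{\tr,n}_k$ fixes the $(n-k)$-th subdiagonal of the running array in its final form, and then tracks step by step how $D_{n,j}$ is assembled from the entries of $T^\tr_{n-1}(W_{n-1})$ and the new row $w_{n1},\dots,w_{n,n-1}$; this is more computational but parallels the proof of Theorem \ref{thmtr2}.
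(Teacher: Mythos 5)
Your proof is correct, but it takes a genuinely different route from the paper's. The paper argues directly on the triangular array: it first derives from the Bender--Knuth relations \eqref{bkrecall} the one-step identity \eqref{rifrac} for $\rho^{\tr,n}_j$, iterates it to reduce \eqref{trtype} to the evaluation of $\tau^{j+1}_j(T^\tr_{j+1}(W_{j+1}))$, and then computes that base case using \eqref{bii-1} together with two applications of Theorem \ref{shape-of-tr}. You instead bypass the local-move bookkeeping entirely: you note that for a symmetric $n\times n$ output the $s$-th row product $\rho_s$ of the $Q$-pattern coincides, for $1\le s\le n-1$, with $D_{ns}$ of the strictly lower triangular part (your index check via \eqref{tz2} is right, and no diagonal entry intrudes since $n-s\ge 1$), so the property $\mbox{type }Q=R$ identifies $\tau^n_s$ of that part with the $s$-th row product of the input; feeding in the symmetric matrix $W_n^\varen$ with diagonal entries $\varen$ and invoking Proposition \ref{lot}, the exact identity $\varen\bigl(\tau^n_s(T^\tr_n(W_n))+o(1)\bigr)=\varen\prod_{\ell<s}w_{s\ell}\prod_{k>s}w_{ks}$ yields \eqref{trtype} in the limit. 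There is no circularity, since Proposition \ref{lot} precedes Proposition \ref{proptr} and your argument does not use Theorem \ref{shape-of-tr} (both proofs ultimately rest on the symmetric degeneration anyway, the paper's indirectly through Theorem \ref{shape-of-tr}). What each approach buys: yours is shorter and more conceptual, trading the explicit computation for the row-product property of gRSK plus a routine limit; the paper's computation produces the intermediate identities \eqref{rifrac}--\eqref{rifrac2} and the product formulas for consecutive shape entries, which are reused verbatim in the proof of Corollary \ref{pushtr}, so if you adopt your route you would need to supply those identities separately there.
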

\begin{proof}
Let us first notice that if $X=(x_{ij},\,1\leq j < i \leq n)$ is a triangular array and $X'=\rho^{\tr,n}_j(X)$, then
\begin{eqnarray}\label{rifrac}
\frac{x'_{nj}\cdots x'_{n-j+1,1}}{x'_{n,j-1}\cdots x'_{n-j+2,1}}=x_{nj}\, \frac{x_{n-1,j}\cdots x_{n-j,1}}{x_{n-1,j-1}\cdots x_{n-j+1,1}},\qquad j< n-1.
\end{eqnarray}
To check this we notice that $\rho^{\tr,n}_j=\rho^n_j=h_j\circ r_j$, where $h_j$ and $r_j$ are defined in \eqref{bk} via the Bender-Knuth transformations. Let us recall that
\begin{eqnarray}\label{bkrecall}
(b_{ij}(X))_{ij}=x'_{ij}=\frac{x_{i+1,j}\,x_{i,j+1}}{x_{ij}}\frac{x_{i,j-1}+x_{i-1,j}}{x_{i+1,j}+x_{i,j+1}},
\end{eqnarray}
with the same convention as in \eqref{bk1}. Multiplying the various relations \eqref{bkrecall} for $(i,j)=(n,j),\dotsc,(n-j+1,1)$ leads to \eqref{rifrac}. Iterating this leads to
\begin{eqnarray}\label{rifrac2}
\tau^n_{j}(T^\tr_n(W_n))
&=&w_{n,j} \,\,\tau^{n-1}_{j}(T^\tr_{n-1}(W_{n-1})) \nonumber\\
&=&w_{n,j}\cdots w_{j+2,j} \,\,\,\tau^{j+1}_{j}(T^\tr_{j+1}(W_{j+1})).
\end{eqnarray}
Denoting by $w'_{ij}$ the elements of $T^\tr_{j+1}(W_{j+1})$, by $w^\dagger_{ij}$ the elements of 
$T^\tr_{j}(W_{j})$ and using the transformations in \eqref{bii-1} we obtain that
\begin{eqnarray*}
\tau^{j+1}_{j}(T^\tr_{j+1}(W_{j+1}))&=&\frac{w'_{j+1,j}\cdots w'_{21}}{w'_{j+1,j-1}\cdots w'_{31}}\\
&=&w_{j+1,j}\,\,\frac{\prod_{\ell=0}^{\fl{j/2}-1} w^\dagger_{j-2\ell,j-2\ell-1} }{\prod_{\ell=0}^{\fl{(j-1)/2}-1}w^\dagger_{j-2\ell-1,j-2\ell-2}}.  
\end{eqnarray*}
Using Theorem \ref{shape-of-tr} we have that 
$$  \prod_{\ell=0}^{\fl{j/2}-1}w^\dagger_{j-2\ell,j-2\ell-1} =\prod_{1\leq \ell<k \leq j}w_{k\ell}.
$$
The  definition below  \eqref{bii-1} implies that $w^\dagger_{j-2\ell-1,j-2\ell-2} =T^\tr_{j-1}(W_{j-1})_{(j-1)-2\ell,(j-1)-2\ell-1}$  for $\ell=0,\dotsc,\fl{(j-1)/2}-1$ and using again Theorem \ref{shape-of-tr} we obtain 
$$\prod_{\ell=0}^{\fl{(j-1)/2}-1} w^\dagger_{j-2\ell-1,j-2\ell-2} = \prod_{1\leq \ell < k \leq j-1} w_{k\ell}.
$$
Combining the last three relations gives 
$$\tau^{j+1}_{j}(T^\tr_{j+1}(W_{j+1}))= w_{j+1,j} \prod_{1\leq \ell < j} w_{j\ell},
$$
and this completes the proof.
\end{proof}

By  combining Theorems \ref{thmtr2} and \ref{thmtr1} and Proposition \ref{proptr} we   identify  the probability distribution of the shape vector of the
triangular array under inverse gamma weights.  
The mapping that gives the shape vector is  $\sigma^\tr: (\R_{>0})^{n(n-1)/2} \to (\R_{>0})^{n-1}$ defined by
\begin{eqnarray} %\label{sigma}
\sigma^\tr(W)=\mbox{sh }\,T^\tr_n(W)&=&(t_{n,n-1},t_{n-1,n-2},\ldots, t_{2,1} ).
%\nonumber\\
%&=&(t_{nn},t_{n-2,n-2},\dotsc,t_{n-2\fl{n/2}+2,n-2\fl{n/2}+2}). 
\end{eqnarray}
Consider the probability measure 
\begin{eqnarray}\label{weighttr}
\lambda_{\alpha}(dw) = Z_\alpha^{-1}\prod_{1\leq j<i\leq n} w_{ij}^{-\alpha_i-\alpha_j} 
\exp\Bigl(-\sum_{1\leq j<i\leq n} \frac1{w_{ij}} \;\Bigr) \prod_{1\leq j< i\leq n} \frac{dw_{ij}}{w_{ij}}  
\end{eqnarray}
on the space of triangular arrays $(w_{ij},\,1\leq j < i\leq n)\in(\R_{>0})^{n(n-1)/2}$, where $\alpha=(\alpha_1,\dotsc,\alpha_n)$,   $\alpha_i+\alpha_j>0$ and   the normalisation is
\[  Z_\alpha=   \prod_{1\le j<i\le n} \Gamma(\alpha_i+\alpha_j). \]

\begin{corollary}\label{pushtr} 
For the $\lambda_\alpha$-distributed triangular array of weights, the distribution of the shape vector is given by  
\begin{eqnarray*}
&&\lambda_{\alpha}\circ (\sigma^\tr)^{-1}(dt) \\
&=&  \prod_{1\le j<i\le n} \Gamma(\alpha_i+\alpha_j)^{-1} 
 \left(\frac{\prod_{\ell=0}^{\fl{\frac{n-1}{2}}-1 }t_{n-2\ell-1,n-2\ell-2}}{\prod_{\ell=0}^{\fl{\frac{n}{2}}-1}t_{n-2\ell,n-2\ell-1} }\right)^{\alpha_n} e^{-\,\frac1{t_{2,1}}} \Psi^{n-1}_{\alpha'}(t)
 \prod_{0\leq i \leq n-2} \frac{dt_{n-i,n-i-1}}{t_{n-i,n-i-1}} 
\end{eqnarray*}
where  $\alpha'=(\alpha_1,\dotsc,\alpha_{n-1})$. 
%\begin{eqnarray*}
%&&\lambda_{\alpha}\circ (\sigma^\tr)^{-1}(dt) =
 %\prod_{\substack{0\le i\le n-1:\\[1pt] \text{$i$ even}}} \frac{dt_{n-i,n-i-1}}{t_{n-i,n-i-1}}  \prod_{1\le j<i\le n} \Gamma(\alpha_i+\alpha_j)^{-1} \\
%&&\times   \int\limits_{(\R_{>0})^{\ce{n/2}-1}} 
%\, \left(\frac{\prod_{k=0}^{\fl{\frac{n-1}{2}}-1 }t_{n-2k-1,n-2k-2}}{\prod_{k=0}^{\fl{\frac{n}{2}}-1}t_{n-2k,n-2k-1} }\right)^{-\alpha_n} e^{-\,\frac1{t_{2,1}}} \Psi^{n-1}_{\alpha'}(t)
% \prod_{\substack{1\le i\le n-1:\\[1pt] \text{$i$ odd}}} \frac{dt_{n-i,n-i-1}}{t_{n-i,n-i-1}},
%\end{eqnarray*}
\end{corollary}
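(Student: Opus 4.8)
The plan is to follow the same three-move strategy used to prove Corollaries \ref{pf} and \ref{pfs}: change variables by $W\mapsto T^\tr_n(W)$, use the volume-preserving property and the fundamental identity to rewrite the $\lambda_\alpha$-density purely in terms of the output array $T_n=T^\tr_n(W)$, and then integrate out the entries of $T_n$ that do not belong to the shape. First I would invoke Theorem \ref{thmtr1}: in logarithmic variables $T^\tr_n$ has Jacobian $\pm1$, so $\prod_{j<i}dw_{ij}/w_{ij}=\prod_{j<i}dt_{ij}/t_{ij}$ and the push-forward of $\lambda_\alpha$ under $T^\tr_n$ has density $Z_\alpha^{-1}\prod_{j<i}w_{ij}^{-\alpha_i-\alpha_j}\exp\bigl(-\sum_{j<i}1/w_{ij}\bigr)$, expressed through $w=(T^\tr_n)^{-1}(t)$; by Theorem \ref{thmtr2} the exponent becomes $\E^\tr(T_n)$, a function of $t$ alone.

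The monomial is handled with Proposition \ref{proptr}. Bookkeeping of exponents in $\tau^n_j(T_n)=\prod_{\ell<j}w_{j\ell}\prod_{k>j}w_{kj}$ shows that each $w_{ij}$ with $j<i\le n-1$ contributes a factor $w_{ij}^{-\alpha_i}$ to $\tau^n_i(T_n)^{-\alpha_i}$ and $w_{ij}^{-\alpha_j}$ to $\tau^n_j(T_n)^{-\alpha_j}$, while $w_{nj}$ enters only $\tau^n_j(T_n)^{-\alpha_j}$, accounting for $w_{nj}^{-\alpha_j}$; hence
\[
\prod_{1\le j<i\le n}w_{ij}^{-\alpha_i-\alpha_j}=\prod_{j=1}^{n-1}\tau^n_j(T_n)^{-\alpha_j}\cdot\Bigl(\prod_{j=1}^{n-1}w_{nj}\Bigr)^{-\alpha_n}.
\]
To turn the last factor into the stated $t$-prefactor I would combine two facts: telescoping gives $\prod_{j=1}^{n-1}\tau^n_j(T_n)=D_{n,n-1}(T_n)=t_{n,n-1}t_{n-1,n-2}\cdots t_{2,1}$, the product of \emph{all} shape coordinates, while the same product equals $\bigl(\prod_{j<i}w_{ij}\bigr)^2/\prod_j w_{nj}$ by the explicit formula in Proposition \ref{proptr}; and Theorem \ref{shape-of-tr}, together with the elementary observation that the maximal family of $\fl{n/2}$ non-intersecting paths below the diagonal is forced and exhausts the triangle $\{1\le j<i\le n\}$ (so $z_{\fl{n/2}}=\prod_{j<i}w_{ij}$), identifies the product $\prod_\ell t_{n-2\ell,n-2\ell-1}$ of the odd-position shape coordinates with $\prod_{j<i}w_{ij}$. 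Dividing, $\prod_j w_{nj}=\prod_\ell t_{n-2\ell,n-2\ell-1}\big/\prod_\ell t_{n-2\ell-1,n-2\ell-2}$, and therefore $\bigl(\prod_j w_{nj}\bigr)^{-\alpha_n}$ is exactly the $\alpha_n$-prefactor appearing in the statement, which depends on $t$ only through the shape vector.

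Finally I would integrate over the non-shape entries of $T_n$. Since the $\alpha_n$-prefactor factors out, what remains is $\int\prod_{j=1}^{n-1}\tau^n_j(T_n)^{-\alpha_j}e^{-\E^\tr(T_n)}$ over the $t_{ij}$ with $(i,j)$ not of the form $(i,i-1)$, at fixed shape. I would recognise this as an instance of the generating-function identity \eqref{gf}: reindex $T_n$ as a height-$(n-1)$ triangle pattern via $z_{k\ell}=t_{n-\ell+1,\,k-\ell+1}$ (the restriction of \eqref{tz1}), under which the bottom row $z_{n-1,\centerdot}$ is precisely the shape vector, $\rho_i=\prod_{m\le i}z_{im}=D_{n,i}(T_n)$ so that the pattern type is $(\tau^n_i(T_n))_{i=1}^{n-1}$, and a direct index chase shows $\E^\tr(T_n)=\F_1(P)$ with tip $z_{n-1,n-1}=t_{2,1}$. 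Then \eqref{gf} (with $h=p=n-1$, so no positivity hypothesis on $s$ is needed, at $s=1$) gives the inner integral as $\Psi^{n-1}_{\alpha';1}(t)=e^{-1/t_{2,1}}\Psi^{n-1}_{\alpha'}(t)$ by \eqref{wgendef1}, and assembling the pieces with $Z_\alpha=\prod_{j<i}\Gamma(\alpha_i+\alpha_j)$ yields the asserted formula.

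The main obstacle I expect is bookkeeping rather than depth: getting the reindexing $z_{k\ell}=t_{n-\ell+1,k-\ell+1}$ right so that $\E^\tr$ matches $\F_1$ and the type monomials line up, and pinning down the parity-dependent labelling of the $\alpha_n$-prefactor. The one genuinely non-formal ingredient is the identity $z_{\fl{n/2}}=\prod_{j<i}w_{ij}$; this is a short combinatorial argument (the extremal path family is uniquely determined and tiles the sub-diagonal triangle, as in Figure \ref{d:fig}), but if one prefers to avoid it the same shape identity can instead be extracted from Lemma \ref{lem-sym}(b) via the degeneration $\varen\searrow0$ of Proposition \ref{lot}.
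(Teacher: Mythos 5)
Your proposal is correct and follows essentially the same route as the paper: volume preservation (Theorem \ref{thmtr1}) plus the energy identity (Theorem \ref{thmtr2}) to push the density forward, Proposition \ref{proptr} to split the monomial into type factors times $\bigl(\prod_j w_{nj}\bigr)^{-\alpha_n}$, and integration over the non-shape entries via the pattern representation \eqref{gf} to produce $e^{-1/t_{21}}\Psi^{n-1}_{\alpha'}(t)$. The only cosmetic difference is that you re-derive the two product identities $\prod_\ell t_{n-2\ell,n-2\ell-1}=\prod_{j<i\le n}w_{ij}$ and $\prod_\ell t_{n-2\ell-1,n-2\ell-2}=\prod_{j<i\le n-1}w_{ij}$ from Theorem \ref{shape-of-tr} and a telescoping/tiling argument, whereas the paper quotes them directly from the proof of Proposition \ref{proptr}; both rest on the same combinatorial fact.
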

\begin{proof}
Let  $T=(t_{ij},\,\,1\leq j< i \leq n)=T^\tr_n(W)$. 
We convert the density \eqref{weighttr} into $t_{ij}$ variables.  By Proposition \ref{proptr}, 
%We   express $\prod_{j<i} w_{ij}^{-\alpha_i-\alpha_j} $ in terms of the $t_{ij}$ variables: 
%Extend the array $(w_{ij},\,\,1\leq j<i\leq n)$ to a square, symmetric matrix $(w_{ij},\,\,1\leq j,i\leq n)$ with $w_{ii}=1$, $i=1,2,...,n$. We have
\begin{eqnarray}\label{cor66eq1}
\prod_{j<i} w_{ij}^{-\alpha_i-\alpha_j} 
&=& \prod_{j=1}^{n}\left(\;\prod_{\ell=1}^{j-1} w_{j\ell}  \cdot \prod_{k=j+1}^n w_{kj}\right)^{-\alpha_j}
%&=& \prod_{j=1}^{n-1}\left(\prod_{i=1}^n w_{ij}\right)^{-\alpha_j} \left(\prod_{i=1}^n w_{in}\right)^{-\alpha_n}\\
= \prod_{j=1}^{n-1}(\tau^n_j)^{-\alpha_j}  \cdot 
\left(\;\prod_{j=1}^{n-1} w_{nj}\right)^{-\alpha_n}. \nonumber
\end{eqnarray}
From the proof of Proposition \ref{proptr} (after relation \eqref{rifrac2}),  
\begin{eqnarray*}
\prod_{\ell=0}^{\fl{n/2}-1} t_{n-2\ell,n-2\ell-1}&=&\prod_{1\leq j<i\leq n} w_{ij}\\
\text{and} \qquad \prod_{\ell=0}^{\fl{(n-1)/2}-1} t_{n-2\ell-1,n-2\ell-2}
&=& \prod_{1\leq j<i\leq n-1} w_{ij}.
%\prod_{\substack{1\leq i\leq n-1 \\ 1\leq j\leq n }} w_{ij}.
\end{eqnarray*}
%Combining these gives 
%\begin{eqnarray} \prod_{j<i} w_{ij}^{-\alpha_i-\alpha_j}  &=& \prod_{j=1}^{n-1}(\tau^n_j)^{-\alpha_j} \left(\frac{\prod_{\ell=0}^{\fl{n/2}-1} t_{n-2\ell,n-2\ell-1}}{\prod_{\ell=0}^{\fl{(n-1)/2}-1} t_{n-2\ell-1,n-2\ell-2}}\right)^{-\alpha_n}. \nn\end{eqnarray}
Combine these with  Theorem \ref{thmtr2} to obtain 
\begin{eqnarray*}
\prod_{j<i} w_{ij}^{-\alpha_i-\alpha_j} 
\exp\Bigl(-\sum_{j<i} \frac1{w_{ij}} \;\Bigr) =
 \left(\frac{\prod_{\ell=0}^{\fl{\frac{n-1}{2}}-1 }t_{n-2\ell-1,n-2\ell-2}}{\prod_{\ell=0}^{\fl{\frac{n}{2}}-1}t_{n-2\ell,n-2\ell-1} }\right)^{\alpha_n}   \,\,
 \prod_{j=1}^{n-1}(\tau^n_j)^{-\alpha_j} \,\,e^{-\mathcal{E^\tr}(T)}.  
\end{eqnarray*}
By the volume preserving property of the
$W\mapsto T$  map (Theorem \ref{thmtr1}),   
\begin{eqnarray*}
&&\lambda_{\alpha}\circ (T^\tr)^{-1}(dt)
=  \left(\frac{\prod_{\ell=0}^{\fl{\frac{n-1}{2}}-1 }t_{n-2\ell-1,n-2\ell-2}}{\prod_{\ell=0}^{\fl{\frac{n}{2}}-1}t_{n-2\ell,n-2\ell-1} }\right)^{\alpha_n}  \,\,\prod_{j=1}^{n-1}(\tau^n_j)^{-\alpha_j} \,\, e^{-\mathcal{E^\tr}(T)} \prod_{j<i} \frac{dt_{ij}}{t_{ij}}.
\end{eqnarray*}
The result then follows by integrating over the variables $(t_{ij},\,\,1\leq j<i-1,\,\,1\leq i \leq n)$ and the definition of the Whittaker function.
\end{proof}

As a further corollary we record the distribution of the vector 
$(z_1, z_2/z_1, \dotsc , z_{\fl{n/2}}/z_{\fl{n/2}-1})$ of ratios of partition functions $z_r$   defined by  \eqref{parttr}.   The result comes by 
 combining Corollary \ref{pushtr} with Theorem \ref{shape-of-tr}.  
\begin{corollary}
Let the array of weights $(w_{ij},\,1\leq j < i\leq n)$ have distribution 
$\lambda_\alpha$ of \eqref{weighttr}, and as before
$\alpha=(\alpha_1,\dotsc,\alpha_n)=(\alpha', \alpha_n)$.
Then the distribution of the vector $(z_1, z_2/z_1, \dotsc , z_{\fl{n/2}}/z_{\fl{n/2}-1})$, with the partition functions $z_r$  defined in \eqref{parttr}, is given as follows
in terms of the integral of a bounded Borel function $\varphi$: 
\be \label{z-distr}\begin{aligned} 
&\int \varphi(z_1, z_2/z_1, \dotsc , z_{\fl{n/2}}/z_{\fl{n/2}-1})\,\lambda_\alpha(dw) \\
&=\prod_{1\le j<i\le n} \!\!\Gamma(\alpha_i+\alpha_j)^{-1}   
\!\! \int\limits_{(\R_{>0})^{\fl{\frac{n}2}}} \prod_{\substack{0\le i\le n-2:\\[1pt] \text{$i$ even}}} \frac{dt_{n-i,n-i-1}}{t_{n-i,n-i-1}}  
\;\varphi( t_{n,n-1}, t_{n-2,n-3}, \dotsc, t_{n-2\fl{\frac{n}2}+2, n-2\fl{\frac{n}2}+1})   \\[3pt]
&\quad \times \int\limits_{(\R_{>0})^{\ce{n/2}-1}} 
\, \left(\frac{\prod_{k=0}^{\fl{\frac{n-1}{2}}-1 }t_{n-2k-1,n-2k-2}}{\prod_{k=0}^{\fl{\frac{n}{2}}-1}t_{n-2k,n-2k-1} }\right)^{\alpha_n}  e^{-\,\frac1{t_{2,1}}} \Psi^{n-1}_{\alpha'}(t)
 \prod_{\substack{1\le i\le n-2:\\[1pt] \text{$i$ odd}}} \frac{dt_{n-i,n-i-1}}{t_{n-i,n-i-1}}. 
\end{aligned}\ee
\end{corollary}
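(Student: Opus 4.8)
The plan is to obtain \eqref{z-distr} directly by combining Corollary~\ref{pushtr} with Theorem~\ref{shape-of-tr}; the only real content is an index bookkeeping and a single application of Fubini's theorem, so I do not expect a genuine obstacle.

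First I would set up notation for the shape vector. The entries of $\sigma^\tr(W)=(t_{n,n-1},t_{n-1,n-2},\dots,t_{2,1})$ are naturally indexed as $t_{n-i,n-i-1}$ with $0\le i\le n-2$. Among these indices there are exactly $\fl{n/2}$ with $i$ even, namely $i=0,2,\dots,2\fl{n/2}-2$, and exactly $\ce{n/2}-1$ with $i$ odd, and $\fl{n/2}+(\ce{n/2}-1)=n-1$. By Theorem~\ref{shape-of-tr}, the subvector of $\sigma^\tr(W)$ formed by the even-$i$ entries equals $(z_1,z_2/z_1,\dots,z_{\fl{n/2}}/z_{\fl{n/2}-1})$. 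Consequently, for a bounded Borel function $\varphi$ on $(\R_{>0})^{\fl{n/2}}$, the quantity $\varphi(z_1,z_2/z_1,\dots,z_{\fl{n/2}}/z_{\fl{n/2}-1})$ is $\varphi$ evaluated at the even-$i$ coordinates of $\sigma^\tr(W)$, hence a bounded Borel function of $\sigma^\tr(W)$.

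Next I would apply the change of variables supplied by Corollary~\ref{pushtr}, which describes $\lambda_\alpha\circ(\sigma^\tr)^{-1}$ explicitly. This rewrites $\int\varphi(z_1,z_2/z_1,\dots)\,\lambda_\alpha(dw)$ as the integral over $(\R_{>0})^{n-1}$ of $\varphi$, evaluated at the even-$i$ coordinates $t_{n-i,n-i-1}$, against the density of Corollary~\ref{pushtr}: the constant $\prod_{1\le j<i\le n}\Gamma(\alpha_i+\alpha_j)^{-1}$, the prefactor $\bigl(\prod_k t_{n-2k-1,n-2k-2}/\prod_k t_{n-2k,n-2k-1}\bigr)^{\alpha_n}$, the factor $e^{-1/t_{2,1}}\,\Psi^{n-1}_{\alpha'}(t)$, and the measure $\prod_{0\le i\le n-2} dt_{n-i,n-i-1}/t_{n-i,n-i-1}$. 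Since that density is non-negative with total mass $1$ and $\varphi$ is bounded, the integral is absolutely convergent and Fubini's theorem lets me split the $(n-1)$-fold integral into an outer integral over the $\fl{n/2}$ even-$i$ variables (the arguments of $\varphi$) and an inner integral over the $\ce{n/2}-1$ odd-$i$ variables, carrying with it the Whittaker weight $\Psi^{n-1}_{\alpha'}(t)$, the prefactor and the factor $e^{-1/t_{2,1}}$. This iterated integral is precisely the right-hand side of \eqref{z-distr}.

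The only points deserving attention are the index count in the second step and the remark that $\Psi^{n-1}_{\alpha'}(t)$, the prefactor and $e^{-1/t_{2,1}}$ all depend on the full vector $t=(t_{n,n-1},\dots,t_{2,1})$, so they legitimately remain inside the inner integral even though, for even $n$, the variable $t_{2,1}$ is itself one of the outer arguments of $\varphi$; this is a benign feature of the nested integral rather than an obstruction. I expect no difficulty beyond organizing these indices correctly.
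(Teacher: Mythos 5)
Your proposal is correct and follows exactly the route the paper takes: the paper's entire proof is the single remark that the result "comes by combining Corollary~\ref{pushtr} with Theorem~\ref{shape-of-tr}", and you have filled in precisely that combination, with the correct index count ($\fl{n/2}$ even-$i$ outer variables, $\ce{n/2}-1$ odd-$i$ inner variables) and a legitimate use of Fubini for the nonnegative push-forward density. Your parenthetical observation that $e^{-1/t_{2,1}}$ may depend on an outer variable when $n$ is even, while harmless for the nested integral, is also accurate.
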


The results above are related to those of symmetric weight matrices in
several  ways. 

(i)    Replace $n$ with
$n-1$ in Corollary \ref{pfs} and 
consider a  symmetric $(n-1)\times(n-1)$  weight matrix with distribution \eqref{nutilde},  and set     $\dipa=\alpha_n$.   Let $\sigma_1=t_{n-1,n-1}$ be
the polymer partition function of the symmetric matrix, 
or equivalently, the front element  of its shape vector.  
Then  a comparison of  \eqref{z-distr}  and \eqref{nu4}  reveals that   the  distribution of the partition function $z_1$ is
 identical to the distribution of $2t_{n-1,n-1}$.  

(ii) Corollary \ref{pushtr}  can be obtained as the $\dipa\to\infty$ limit of Corollary \ref{pfs}.  Using the recursive structure \eqref{wdef} of Whittaker functions, namely
  $\Psi^n_\alpha  = Q^{n,n-1}_{\alpha_n}\Psi^{n-1}_{\alpha'}$,  one can show that
$$\tilde\nu_{\alpha,\dipa}\circ\sigma^{-1}  \,\,\,\Longrightarrow \,\,\,\lambda_{\alpha}\circ (\sigma^\tr)^{-1}  \qquad 
\text{as}\,\,\, \dipa\to\infty,
$$  
where ``$\Longrightarrow$" denotes weak convergence of probability measures.  
Under the measure $\tilde\nu_{\alpha,\dipa}$ the diagonal element $w_{ii}$ of the symmetric input matrix has probability distribution 
\[   \rho_{ii}(du)=\frac{ u^{-\alpha_i-\dipa} \, e^{-1/(2u)}}{2^{\alpha_i+\dipa}\Gamma(\alpha_i+\dipa) } \cdot \frac{du}{u} \quad \text{on $0<u<\infty$},  \]
and hence its reciprocal $w_{ii}^{-1}$ is twice a gamma variable 
with parameter $\alpha_i+\dipa$.   Consequently 
%the scaled  weight  converges:  
%\be 
$\dipa w_{ii}\to 1/2$  almost everywhere as $\dipa\to\infty$.    
% \label{d:slim}\ee
 Thus $w_{ii}$ decays as $(1/2)\dipa^{-1}$.  This corresponds to the appearance,
 in our proof, of  triangular arrays with diagonal elements  $\varen\to 0$. 
 
(iii)   The limit $\dipa\to\infty$, or equivalently $\varen\to 0$, introduces a {\sl depinning} 
 effect on the polymer, which is responsible for the appearance of the hard wall phenomenon. It is also worth noting   that the structure of the measure $\lambda_{\alpha}\circ (\sigma^\tr)^{-1}$ is similar to that of $\tilde\nu_{\alpha,\dipa}\circ\sigma^{-1} $. In other words it appears that the hard wall produces also a ``pinning" effect along the line $\{(j,n),\,\,\,1\leq j\leq n\}$. 

\section{Whittaker integral identities}\label{wid}

In this section, we recall three integral identities for Whittaker functions which were proved in the 
papers~\cite{stade,stade-jams}, and explain how they are equivalent to (and in fact generalized by)
those which have appeared naturally in the context of the present paper (Corollaries \ref{cor-bs-gen},
\ref{stade} and \ref{nwid}).
We first note that the functions $W_{n,a}(y)$ introduced in Section \ref{wp} are denoted by $W_{n,2a}(y)$ 
in the papers~\cite{stade,stade-jams}.  The following identity was conjectured by Bump~\cite{bump}
and proved by Stade~\cite[Theorem 1.1]{stade}.  
\begin{thm}[Stade]\label{st1} 
For $s\in\C$, $a,b\in\C^n$ with $\sum_i a_i = \sum_i b_i=0$,
\begin{align}\label{stadei}
 \int_{(\R_{>0})^{n-1} } W_{n,a}(y) W_{n,b}(y) 
\prod_{j=1}^{n-1} (\pi y_j)^{2js} & (2 y_j^{-j(n-j)}) \frac{dy_j}{y_j} \nonumber \\
&= \Gamma(ns)^{-1} \prod_{j,k} \Gamma(s+a_j+b_k) .\end{align}
\end{thm}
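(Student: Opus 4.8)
The plan is to deduce Theorem~\ref{st1} from Corollary~\ref{stade} by a change of variables, using the dictionary \eqref{rel} between the automorphic normalisation $W_{n,a}$ and the Whittaker functions $\Psi^n_\lambda$. Write $\mathbf 1=(1,\dots,1)$. Since $\sum_i a_i=\sum_i b_i=0$, I would apply Corollary~\ref{stade} with $\lambda=a$ and $\nu=b+s\mathbf 1$; this is legitimate provided $r>0$ and $\Re(s+a_j+b_k)>0$ for all $j,k$ (which in particular forces $\Re s>0$). Using $\sum_i(\nu_i+\lambda_i)=ns$ and $\nu_i+\lambda_j=s+a_j+b_i$, it gives
\begin{equation*}
\int_{(\R_{>0})^n} e^{-rx_1}\,\Psi^n_{-b-s\mathbf 1}(x)\,\Psi^n_{-a}(x)\prod_{i=1}^n\frac{dx_i}{x_i}
= r^{-ns}\prod_{j,k}\Gamma(s+a_j+b_k).
\end{equation*}
By the shift identity \eqref{shift}, $\Psi^n_{-b-s\mathbf 1}(x)=\bigl(\prod_i x_i\bigr)^{s}\Psi^n_{-b}(x)$; and since $\sum_i a_i=\sum_i b_i=0$, the relation \eqref{rel} reduces to $\Psi^n_{-a}(x)=\bigl(\prod_{j=1}^{n-1}y_j^{-j(n-j)/2}\bigr)W_{n,a}(y)$ and likewise for $b$, where $\pi y_j=\sqrt{x_{n-j+1}/x_{n-j}}$. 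So the left-hand side above becomes
\begin{equation*}
\int_{(\R_{>0})^n} e^{-rx_1}\Bigl(\prod_i x_i\Bigr)^{s}\Bigl(\prod_{j=1}^{n-1}y_j^{-j(n-j)}\Bigr)W_{n,a}(y)\,W_{n,b}(y)\prod_{i=1}^n\frac{dx_i}{x_i}.
\end{equation*}

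Next I would change variables from $x=(x_1,\dots,x_n)$ to $(t,y_1,\dots,y_{n-1})$, taking $t=x_1$ and recovering the remaining coordinates from $x_{k+1}=x_k\,\pi^2 y_{n-k}^2$ for $1\le k\le n-1$. A short computation gives $x_1=t$ and $\prod_i x_i=t^n\prod_{j=1}^{n-1}(\pi y_j)^{2j}$, while in logarithmic coordinates the map is linear with Jacobian determinant $\pm 2^{n-1}$ (subtracting consecutive rows of the matrix leaves the identity together with one entry equal to $2$ in each $y$-row), so $\prod_i dx_i/x_i=2^{n-1}\,(dt/t)\prod_{j}dy_j/y_j$. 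Separating the $t$-integral by $\int_0^\infty e^{-rt}t^{ns}\,dt/t=r^{-ns}\Gamma(ns)$ then shows that the displayed integral equals
\begin{equation*}
r^{-ns}\,\Gamma(ns)\cdot 2^{n-1}\int_{(\R_{>0})^{n-1}}\prod_{j=1}^{n-1}(\pi y_j)^{2js}\;\prod_{j=1}^{n-1}y_j^{-j(n-j)}\;W_{n,a}(y)\,W_{n,b}(y)\prod_{j=1}^{n-1}\frac{dy_j}{y_j}.
\end{equation*}
Since $2^{n-1}\prod_j y_j^{-j(n-j)}=\prod_{j=1}^{n-1}\bigl(2\,y_j^{-j(n-j)}\bigr)$, the remaining $y$-integral is exactly the left-hand side of \eqref{stadei}. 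Equating with the right-hand side $r^{-ns}\prod_{j,k}\Gamma(s+a_j+b_k)$ of Corollary~\ref{stade}, cancelling $r^{-ns}$ and dividing by $\Gamma(ns)$ produces \eqref{stadei}.

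The individual computations are routine; the step that needs genuine care is the change of variables together with the bookkeeping of the powers of $\pi$, of $2$ and of the $y_j$. In particular the exponent $2j$ in \eqref{stadei} appears precisely because one must scale along $x_1$ --- scaling along $x_n$ would produce $2(n-j)$ instead, and would also spoil the clean Gamma-integral in $t$. The remaining technical point is the use of Fubini's theorem to separate the $t$- and $y$-integrals; this is justified exactly as in the proof of Corollary~\ref{cor-bs-gen} (equivalently, by Tonelli's theorem when $r,s,a,b$ are real and positive, followed by analytic continuation of both sides in the parameters throughout $\{\Re(s+a_j+b_k)>0\}$ subject to $\sum_i a_i=\sum_i b_i=0$). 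This is why the argument establishes \eqref{stadei} only under the restriction $\Re(s+a_j+b_k)>0$, consistent with the ``some restrictions on the parameters'' mentioned in the introduction.
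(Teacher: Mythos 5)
Your derivation is correct and matches the paper's own treatment: Section~\ref{wid} establishes exactly this equivalence between \eqref{stadei} and the $\Psi$-form (Theorem~\ref{thm-stade-psi}, i.e.\ Corollary~\ref{stade}) using \eqref{rel}, \eqref{shift} and the same change of variables $\pi y_j=\sqrt{x_{n-j+1}/x_{n-j}}$ together with the factor $\Gamma(ns)=\int_0^\infty x_1^{ns}e^{-x_1}\,dx_1/x_1$, merely written in the opposite direction (from \eqref{stadei} to \eqref{stade-psi}). As you correctly note, feeding in the combinatorially proved Corollary~\ref{stade} only yields the identity under $\Re(s+a_j+b_k)>0$; the unrestricted statement is quoted from Stade, and the paper makes the same caveat.
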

This integral is associated, via the Rankin-Selberg method, with Archimedean $L$-factors of automorphic 
$L$-functions on $GL(n,\R)\times GL(n,\R)$. 
Using (\ref{rel}), it is straightforward to see that this is equivalent to:
\begin{thm}[Stade]\label{thm-stade-psi} 
Suppose $r>0$ and $\lambda,\nu\in\C^n$.  Then
\begin{equation}\label{stade-psi}
\int_{(\R_{>0})^n } e^{-r x_1} \Psi^n_{-\nu}(x) \Psi^n_{-\lambda}(x)\prod_{i=1}^n \frac{dx_i}{x_i}
= r^{-\sum_{i=1}^n (\nu_i+\lambda_i)} \prod_{ij}\Gamma(\nu_i+\lambda_j).
\end{equation}
\end{thm}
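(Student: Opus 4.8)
The plan is to recognise Theorem~\ref{thm-stade-psi} as nothing more than a restatement of Stade's Theorem~\ref{st1}, read off through the normalisation relation~(\ref{rel}); the remark immediately following Theorem~\ref{st1} already asserts this equivalence, and the task is to make the change of variables there explicit. The case $n=1$ is handled by hand, since $\Psi^1_{-\nu}(x)\Psi^1_{-\lambda}(x)=x^{\nu+\lambda}$ and $\int_0^\infty e^{-rx}x^{\nu+\lambda-1}\,dx=\Gamma(\nu+\lambda)\,r^{-(\nu+\lambda)}$, so from now on assume $n\ge 2$.

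First I would split the parameters into their ``traceless'' and ``trace'' parts: set $c_\lambda=\tfrac1n\sum_i\lambda_i$, $a_k=\lambda_k-c_\lambda$, and likewise $c_\nu,b$, and write $s=c_\lambda+c_\nu$. Then $\sum_i a_i=\sum_i b_i=0$, while $ns=\sum_i(\lambda_i+\nu_i)$ and $s+a_j+b_k=\lambda_j+\nu_k$; this $s$ will play the role of Stade's parameter. Next I would change variables on $(\R_{>0})^n$ from $(x_1,\dots,x_n)$ to $(x_1,y_1,\dots,y_{n-1})$ via $\pi y_j=\sqrt{x_{n-j+1}/x_{n-j}}$, the substitution built into~(\ref{rel}). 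A short computation gives $x_i=x_1\,\pi^{2(i-1)}\prod_{j=n-i+1}^{n-1}y_j^2$, hence $\prod_i x_i=x_1^{n}\,\pi^{n(n-1)}\prod_j y_j^{2j}$, while the Jacobian in logarithmic coordinates is triangular with diagonal $(1,2,\dots,2)$, so $\prod_i dx_i/x_i=2^{n-1}(dx_1/x_1)\prod_j(dy_j/y_j)$.

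Substituting~(\ref{rel}) for each of $\Psi^n_{-\lambda}(x)$ and $\Psi^n_{-\nu}(x)$ turns $\Psi^n_{-\nu}(x)\Psi^n_{-\lambda}(x)$ into $(\prod_i x_i)^{s}\big(\prod_j y_j^{-j(n-j)}\big)W_{n,a}(y)W_{n,b}(y)$. Feeding in the expressions for $\prod_i x_i$ and the Jacobian, the integral in~(\ref{stade-psi}) factors as a constant (a power of $2$ times a power of $\pi$) times $\int_0^\infty e^{-rx_1}x_1^{ns-1}\,dx_1$ times $\int_{(\R_{>0})^{n-1}}\prod_j y_j^{2js-j(n-j)}W_{n,a}(y)W_{n,b}(y)\prod_j dy_j/y_j$. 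The first integral is $\Gamma(ns)\,r^{-ns}$; the second, once one absorbs the stray powers of $2$ and $\pi$ (using $\sum_{j=1}^{n-1}j=n(n-1)/2$), is precisely the left-hand side of~(\ref{stadei}). Applying Theorem~\ref{st1} and substituting back $s+a_j+b_k=\lambda_j+\nu_k$ and $ns=\sum_i(\lambda_i+\nu_i)$, the constant, the factor $\Gamma(ns)$, and the factor $\Gamma(ns)^{-1}$ all cancel, leaving exactly $r^{-\sum_i(\nu_i+\lambda_i)}\prod_{i,j}\Gamma(\nu_i+\lambda_j)$.

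The one delicate point is the bookkeeping: after the change of variables one must check that the integrand against $W_{n,a}W_{n,b}$ matches the weight $\prod_j(\pi y_j)^{2js}(2y_j^{-j(n-j)})$ of~(\ref{stadei}) down to the last power of $\pi$, of $2$ and of each $y_j$ — this is the place where an error would hide, and it is worth carrying out slowly. The other caveat, not really an obstacle, concerns the parameter range: the $x_1$-integral converges only for $\Re\sum_i(\lambda_i+\nu_i)>0$ (and the $y$-integral on its own domain of absolute convergence), but Theorem~\ref{st1}, i.e.\ \cite[Thm.~1.1]{stade}, is valid for all parameters by analytic continuation, so (\ref{stade-psi}) holds for all $\lambda,\nu$ in the corresponding sense.
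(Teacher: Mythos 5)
Your proposal is correct and follows essentially the same route as the paper: both deduce the $\Psi$-form of the identity from Stade's Theorem~\ref{st1} via the normalisation relation~(\ref{rel}), the change of variables $\pi y_j=\sqrt{x_{n-j+1}/x_{n-j}}$ with Jacobian $2^{n-1}$ in logarithmic coordinates, the identification of the $x_1$-integral with $\Gamma(ns)r^{-ns}$, and the cancellation against the $\Gamma(ns)^{-1}$ in~(\ref{stadei}). Your bookkeeping of the powers of $2$, $\pi$ and $y_j$ checks out, and your closing remarks on the $n=1$ case and on the parameter range (the paper likewise notes that the unrestricted statement rests on \cite[Thm.~1.1]{stade}) are consistent with the paper's treatment.
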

Indeed, if we let $$a_j=\l_j-(1/n)\sum_i\l_i,\qquad b_j=\nu_j-(1/n)\sum_i\nu_i$$ and $s=(1/n)\sum_i (\l_i+\nu_i)$
then, using (\ref{rel}) and (\ref{shift}), (\ref{stadei}) becomes
$$\Gamma(ns) \int_{(\R_{>0})^{n-1} } \Psi^n_{-\nu}(x) \Psi^n_{-\lambda}(x) x_1^{-ns} 
2^{n-1} \prod_{j=1}^{n-1} \frac{dy_j}{y_j} = \prod_{ij}\Gamma(\nu_i+\lambda_j),$$
where $\pi y_j =\sqrt{x_{n-j+1}/x_{n-j}} $ for $j=1,\ldots,n-1$.  It is important to note here
that we are regarding $\Psi^n_{-\nu}(x) \Psi^n_{-\lambda}(x) x_1^{-ns}$ as a function of
$y_1,\ldots,y_{n-1}$.  Now, writing
$$\Gamma(ns)=\int_0^\infty x_1^{ns} e^{-x_1} \frac{dx_1}{x_1}$$
we can absorb this into the integral, changing variables from $y_1,\ldots,y_{n-1},x_1$
to $x_1,\ldots,x_n$, to obtain
$$\int_{(\R_{>0})^n } e^{-x_1} \Psi^n_{-\nu}(x) \Psi^n_{-\lambda}(x)\prod_{i=1}^n \frac{dx_i}{x_i}
= \prod_{ij}\Gamma(\nu_i+\lambda_j).$$
The identity (\ref{stade-psi}) follows, using (\ref{a}).

The second identity is a formula for the Mellin transform
\begin{align*}
N_{n,b,a}(s)=\int_{(\R_{>0})^{n-1} } & W_{n,a}(y_1,\ldots,y_{n-1}) W_{n-1,b}(y_1,\ldots,y_{n-2}) \\
& \prod_{j=1}^{n-1} (\pi y_j)^{2js} (2 y_j^{-j(n-j-1/2)}) \frac{dy_j}{y_j} ,\end{align*}
for $s\in\C$, $n\ge 3$ and $a\in\C^n$, $b\in\C^{n-1}$ with $\sum_i a_i=\sum_j b_j=0$.
This integral is associated with archimedean $L$-factors of automorphic 
$L$-functions on $GL(n-1,\R)\times GL(n,\R)$. The following identity was conjectured by Bump~\cite{bump}
and proved by Stade~\cite[Theorem 3.4]{stade-jams}. 
\begin{thm}[Stade]\label{st2}
$$N_{n,b,a}(s)=\prod_{i,j}\Gamma(s+a_i+b_j).$$
\end{thm}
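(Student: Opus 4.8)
The plan is to deduce Theorem~\ref{st2} from the $m=n-1$ case of Corollary~\ref{cor-bs-gen}, which was already established in Section~\ref{gRSKsec} by combining the volume-preserving property (Theorem~\ref{vp}) with the fundamental identity (Theorem~\ref{bi}) for an $n\times(n-1)$ weight matrix with inverse-gamma entries. The only work is a change of normalisation, carried out exactly as in the passage from Theorem~\ref{st1} to Theorem~\ref{thm-stade-psi}: one rewrites the Mellin transform $N_{n,b,a}(s)$ in terms of the Whittaker functions $\Psi^n$ and $\Psi^{n-1}$, obtaining the equivalent statement Theorem~\ref{thm-stade2-psi}, and then identifies the latter with the already-proved corollary.

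In more detail, I would first introduce $\lambda\in\C^n$, $\nu\in\C^{n-1}$ and set $a_k=\lambda_k-(1/n)\sum_i\lambda_i$, $b_k=\nu_k-(1/(n-1))\sum_i\nu_i$ and $s=(1/n)\sum_i\lambda_i+(1/(n-1))\sum_i\nu_i$, so that $s+a_i+b_j=\lambda_i+\nu_j$ and the constraints $\sum_i a_i=\sum_j b_j=0$ hold automatically. Applying the normalisation relation~(\ref{rel}) once with rank $n$ to $W_{n,a}$ and once with rank $n-1$ to $W_{n-1,b}$, together with the elementary relation~(\ref{shift}), turns the integrand of $N_{n,b,a}(s)$ into a constant multiple of $x_1^{-c}\,\Psi^n_{-\lambda}(x)\,\Psi^{n-1}_{-\nu}(x_2,\dots,x_n)$ against $\prod_{j=1}^{n-1}dy_j/y_j$, where $\pi y_j=\sqrt{x_{n-j+1}/x_{n-j}}$, the two Whittaker functions are made compatible by the identification $(x'_1,\dots,x'_{n-1})=(x_2,\dots,x_n)$ for the rank-$(n-1)$ relation, and $c=\sum_i\lambda_i+\sum_i\nu_i$; here one checks, just as in the rank-$n$ computation, that the powers of $\pi$ and of the $y_j$ produced by~(\ref{rel}) cancel against those in the weight $\prod_j(\pi y_j)^{2js}(2y_j^{-j(n-j-1/2)})$. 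Inserting $\Gamma(c)=\int_0^\infty x_1^{c}e^{-x_1}\,dx_1/x_1$, using the homogeneity~(\ref{a}) of $\Psi^n_{-\lambda}$ and of $\Psi^{n-1}_{-\nu}$ to absorb the factor $x_1^{-c}$, and then changing variables from $(y_1,\dots,y_{n-1},x_1)$ to $(x_1,\dots,x_n)$ (whose Jacobian swallows the remaining constant), one arrives at a Whittaker integral identity for $\int_{(\R_{>0})^n}e^{-x_1}\,\Psi^n_{-\lambda}(x)\,\Psi^{n-1}_{-\nu}(x_2,\dots,x_n)\prod_i dx_i/x_i$; a final application of~(\ref{a}) for the scaling parameter (and of~(\ref{inv}) if needed to match the argument conventions) puts it in the exact shape of Theorem~\ref{thm-stade2-psi}. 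Unlike in the case of Theorem~\ref{st1}, there is no explicit $\Gamma$-factor on the left of Stade's identity to cancel, so a single Gamma remains on the right-hand side of the resulting $\Psi$-identity.

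It then remains to match Theorem~\ref{thm-stade2-psi} with the $m=n-1$ instance of Corollary~\ref{cor-bs-gen}. The generalized Whittaker function $\Psi^{n-1}_{\nu;s}$ with index $\nu\in\C^n$ appearing there is, by the pattern integral~(\ref{gf}) together with the definition~(\ref{wgendef2}), obtained from $e^{-s/x_{n-1}}\Psi^{n-1}_{\nu_1,\dots,\nu_{n-1}}$ by a single application of a $Q$-operator; unfolding this extra integration (and re-expressing the Gamma prefactor via the elementary identity $r^{-c}\Gamma(c)=\int_0^\infty x_1^{c}e^{-rx_1}\,dx_1/x_1$) transforms Corollary~\ref{cor-bs-gen} with $m=n-1$ into precisely Theorem~\ref{thm-stade2-psi}, which is the equivalence already indicated in the remarks following Corollary~\ref{cor-bs-gen}. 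Since Corollary~\ref{cor-bs-gen} was proved under the hypothesis $\Re(\lambda_i+\nu_j)>0$, this argument yields Theorem~\ref{st2} under the corresponding restriction $\Re(s+a_i+b_j)>0$; the general (unrestricted) statement then follows by analytic continuation in $s,a,b$, or may simply be quoted from~\cite{stade-jams}. I expect the main obstacle to be purely computational: tracking the exponents of the constants, of $x_1$ and of the $y_j$ through the two applications of~(\ref{rel}) and the change of variables, and carrying out the unfolding that identifies the generalized Whittaker function of Corollary~\ref{cor-bs-gen} with the pair $(\Psi^n,\Psi^{n-1})$ — everything routine but easy to get wrong, and requiring no new idea beyond the pattern interpretation~(\ref{gf}) already developed in the paper.
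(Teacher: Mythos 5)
Your overall strategy is the paper's: Theorem~\ref{st2} is not proved from scratch but is shown to be equivalent, via the normalisation relation \eqref{rel} (together with \eqref{shift}, \eqref{a}, \eqref{inv}), to Theorem~\ref{thm-stade2-psi}, whose restricted version is supplied by the gRSK machinery; the unrestricted statement is quoted from Stade. One remark on your final paragraph: Corollary~\ref{cor-bs-gen} with $m=n-1$ \emph{is} Theorem~\ref{thm-stade2-psi} verbatim (the same $(n-1)$-fold integral of $\Psi^{n-1}_{\nu;s}\Psi^{n-1}_\lambda$ with the same right-hand side), so the ``unfolding'' of the extra $Q$-operator you describe there is not needed.

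The substantive problem is in the middle step, where you transplant the $\Gamma$-insertion device from the passage Theorem~\ref{st1} $\to$ Theorem~\ref{thm-stade-psi}. That device is tailored to the $GL(n)\times GL(n)$ case, where the $W$-integral has $n-1$ variables, the target $\Psi$-identity has $n$, and the factor $\Gamma(ns)^{-1}$ on the right of \eqref{stadei} is there precisely to be cancelled by the inserted Gamma. For $N_{n,b,a}(s)$ the variable counts already match: both the $W$-integral and Theorem~\ref{thm-stade2-psi} are $(n-1)$-fold, and there is no Gamma factor to absorb. The mechanism the paper actually uses, and which you never invoke, is the pinning relation $\Psi^{n-1}_{\lambda;r}(x_1,\ldots,x_{n-1})=r^{\lambda_n}\Psi^n_\lambda(x_1,\ldots,x_{n-1},r)$: the $n-1$ ratios $y_j$ correspond to $(x_1,\ldots,x_{n-1})$ with the \emph{last} argument of the rank-$n$ Whittaker function held fixed, the $r$-dependence being extracted by homogeneity \eqref{a}. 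Your inserted $\Gamma(c)$ instead produces the $n$-fold integral $\int e^{-x_1}\Psi^n_{-\lambda}(x)\Psi^{n-1}_{-\nu}(x_2,\dots,x_n)\prod_i dx_i/x_i$, which is \emph{not} ``the exact shape of Theorem~\ref{thm-stade2-psi}'': after \eqref{inv} it is that theorem integrated over the pinning parameter against $e^{-1/r}\,dr/r$, and closing the loop requires a further Fubini step and the evaluation of that $r$-integral as $\Gamma(c)$ --- i.e.\ you must undo the very insertion you made. This is repairable, but as written the reduction does not land on Theorem~\ref{thm-stade2-psi}. Your handling of the parameter restriction $\Re(s+a_i+b_j)>0$ and the appeal to Stade (or analytic continuation) for the general case matches the paper.
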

Now, for $\lambda\in\C^n$ and $r>0$,
$$\Psi^{n-1}_{\lambda;r}(x_1,\ldots,x_{n-1})=r^{\lambda_n} \Psi^n_\lambda(x_1,\ldots,x_{n-1},r).$$
Using this, and the relations (\ref{rel}) and (\ref{inv}), it is straightforward to see that Theorem \ref{st2} is equivalent to:
\begin{thm}[Stade]\label{thm-stade2-psi}
Let $r>0$, $\lambda\in\C^{n-1}$ and $\nu\in\C^n$.  Then
\begin{equation}\label{bs-gen1} 
\int_{(\R_{>0})^{n-1} } \Psi^{n-1}_{\nu;r}(x) \Psi^{n-1}_\lambda(x) \prod_{i=1}^{n-1} \frac{dx_i}{x_i}
= r^{-\sum_{i=1}^{n-1} (\nu_i+\lambda_i)} \prod_{ij}\Gamma(\nu_i+\lambda_j) .
\end{equation} 
\end{thm}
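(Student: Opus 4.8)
The plan is to prove Theorem~\ref{thm-stade2-psi} exactly the way Theorem~\ref{thm-stade-psi} was deduced above from Theorem~\ref{st1}: translate Stade's Mellin transform identity $N_{n,b,a}(s)=\prod_{i,j}\Gamma(s+a_i+b_j)$ into the Whittaker pairing \eqref{bs-gen1} by an explicit change of variables, using only the elementary relations (\ref{rel}), (\ref{inv}), (\ref{shift}), (\ref{a}) together with the identity $\Psi^{n-1}_{\nu;r}(x_1,\dots,x_{n-1})=r^{\nu_n}\Psi^{n}_{\nu}(x_1,\dots,x_{n-1},r)$ recorded just above the statement. Since every step is reversible this simultaneously establishes the asserted equivalence; the identity itself is Stade's \cite[Theorem 3.4]{stade-jams}, and in the range $\Re(\nu_i+\lambda_j)>0$ it already follows from Corollary~\ref{cor-bs-gen} of the present paper.

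First I would introduce free parameters $L,M,s\in\C$ and set $\nu_k=a_k+L/n$ for $1\le k\le n$ and $\lambda_k=b_k+M/(n-1)$ for $1\le k\le n-1$, so that $a_k=\nu_k-\tfrac1n\sum_i\nu_i$, $b_k=\lambda_k-\tfrac1{n-1}\sum_i\lambda_i$, with $\nu\in\C^n$ and $\lambda\in\C^{n-1}$ unconstrained. Relation (\ref{rel}) in rank $n$ expresses $W_{n,a}(y_1,\dots,y_{n-1})$ through $\Psi^{n}_{-\nu}(x_1,\dots,x_n)$, where $\pi y_j=\sqrt{x_{n-j+1}/x_{n-j}}$, and (\ref{rel}) in rank $n-1$, applied to the vector $(x_2,\dots,x_n)$ — whose associated $y$-variables are precisely $y_1,\dots,y_{n-2}$ — expresses $W_{n-1,b}(y_1,\dots,y_{n-2})$ through $\Psi^{n-1}_{-\lambda}(x_2,\dots,x_n)$. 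Substituting both into the definition of $N_{n,b,a}(s)$, the power-of-$y$ prefactors from the two copies of (\ref{rel}) combine to $\prod_j y_j^{\,j(n-j)/2+j(n-1-j)/2}=\prod_{j=1}^{n-1}y_j^{\,j(n-j-1/2)}$, which cancels the factor $\prod_j y_j^{-j(n-j-1/2)}$ in the integrand; the factor $\prod_j(\pi y_j)^{2js}$ reduces to $x_1^{-ns}(\prod_{i=1}^n x_i)^{s}$; and, exactly as in the proof of Theorem~\ref{thm-stade-psi}, the resulting integrand is invariant under simultaneous rescaling of all the $x_i$, so it descends to a genuine function of $y_1,\dots,y_{n-1}$.

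Next I would substitute $v_i=1/x_{n-i+1}$, $i=1,\dots,n$, so that by (\ref{inv}) $\Psi^{n}_{-\nu}(x)=\Psi^{n}_{\nu}(v_1,\dots,v_n)$ and $\Psi^{n-1}_{-\lambda}(x_2,\dots,x_n)=\Psi^{n-1}_{\lambda}(v_1,\dots,v_{n-1})$, and use the scale invariance to fix $x_1=1/r$, i.e.\ $v_n=r$. Then $\prod_j\frac{dy_j}{y_j}=2^{-(n-1)}\prod_{i=1}^{n-1}\frac{dv_i}{v_i}$, which absorbs the $2^{n-1}$ coming from the factors of $2$ in $N_{n,b,a}(s)$, and, writing $x_k=1/v_{n-k+1}$ and $\prod_{i=1}^n v_i=r\prod_{i=1}^{n-1}v_i$, the remaining powers of the $v_i$ collapse to an explicit power of $r$ times $(\prod_{i=1}^{n-1}v_i)^{q}$ with $q=L/n-s+M/(n-1)$. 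Absorbing this last power into $\Psi^{n-1}_{\lambda}$ by (\ref{shift}) replaces $\lambda$ by $\lambda'=\lambda-(q,\dots,q)$, and the recursion identity above rewrites $\Psi^{n}_{\nu}(v_1,\dots,v_{n-1},r)$ as $r^{-\nu_n}\Psi^{n-1}_{\nu;r}(v_1,\dots,v_{n-1})$, so that $N_{n,b,a}(s)=r^{E}\int_{(\R_{>0})^{n-1}}\Psi^{n-1}_{\nu;r}(v)\,\Psi^{n-1}_{\lambda'}(v)\prod_i\frac{dv_i}{v_i}$ for an explicit exponent $E$. One then checks the parameter dictionary: $E$ is forced, by the $r$-independence of $N_{n,b,a}(s)$, to equal the exponent of $r$ predicted by \eqref{bs-gen1}, and $s+a_i+b_j=\nu_i+\lambda'_j$; granting these, Theorem~\ref{st2} delivers exactly \eqref{bs-gen1} with parameters $\nu$ and $\lambda'$. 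As $a,b$ and $L,M,s$ can be chosen so that $\nu\in\C^n$ and $\lambda'\in\C^{n-1}$ are arbitrary, this is the full statement, and the general $r$ follows from the case $r=1$ by (\ref{a}); reversing all the steps gives the converse direction and hence the claimed equivalence.

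The one genuine difficulty I anticipate is bookkeeping: carrying the three auxiliary parameters $L,M,s$ and the exponents of $y_j$ and $x_i$ through two applications of (\ref{rel}) in consecutive ranks, and tracking the powers of $r$ precisely enough to confirm both that $N_{n,b,a}(s)$ is truly $r$-independent and that the gamma arguments line up. No step is deep; the care lies entirely in the exponent arithmetic, chiefly the cancellations $\prod_j y_j^{\,j(n-j)/2+j(n-1-j)/2}=\prod_j y_j^{\,j(n-j-1/2)}$ and $\prod_j(\pi y_j)^{2js}=x_1^{-ns}(\prod_{i=1}^n x_i)^s$ and the Jacobian $\prod_j dy_j/y_j=2^{-(n-1)}\prod_i dv_i/v_i$. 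Convergence causes no trouble in the range $\Re(\nu_i+\lambda'_j)>0$, where all integrals are absolutely convergent and Fubini and the substitution are legitimate; the statement without restriction on the parameters is Stade's, and can alternatively be recovered by analytic continuation since both sides are meromorphic in the parameters, so in the paper the theorem is best recorded as a restatement of Stade's result together with the elementary equivalence just outlined.
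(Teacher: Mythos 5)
Your proposal is correct and follows essentially the same route as the paper: the paper records Theorem~\ref{thm-stade2-psi} as equivalent to Stade's Theorem~\ref{st2} via the identity $\Psi^{n-1}_{\nu;r}(x_1,\ldots,x_{n-1})=r^{\nu_n}\Psi^n_\nu(x_1,\ldots,x_{n-1},r)$ together with (\ref{rel}) and (\ref{inv}), asserting only that the translation is ``straightforward to see,'' and your write-up supplies exactly that translation modelled on the paper's detailed derivation of Theorem~\ref{thm-stade-psi} from Theorem~\ref{st1}. Your exponent bookkeeping checks out (in particular $\nu_i+\lambda'_j=s+a_i+b_j$ and the $r$-independence of $N_{n,b,a}(s)$ matching $E=\sum_{i=1}^{n-1}(\nu_i+\lambda'_i)$), so the argument is sound.
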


The third identity is a formula for the Mellin transform
$$M_{n,a}(s) = \int_{(\R_{>0})^{n-1} } W_{n,a}(y) 
\prod_{j=1}^{n-1} (\pi y_j)^{2s_j} (2 y_j^{-j(n-j)/2}) \frac{dy_j}{y_j} ,$$
for particular values of $s=(s_1,\ldots,s_{n-1})$ lying on a two-dimensional
subspace of $\C^{n-1}$.   This integral is associated
with an archimedean $L$-factor of an exterior square automorphic $L$-function on $GL(n,\R)$.
The following identity was conjectured by Bump and Friedberg~\cite{bf} 
and proved by Stade~\cite[Theorem 3.3]{stade-jams}. 
\begin{thm}[Stade]\label{st3} 
Let $s_1,s_2\in\C$ and $a\in\C^n$ with $\sum_i a_i=0$.  
Suppose that, for $2<j\le n-1$, $s_j=\epsilon(j) s_1+(j-\epsilon(j)) s_2/2$, 
where $\epsilon(j)=1$ if $j$ is odd and $0$ otherwise.  
Set $s_n=\epsilon(n) s_1+(n-\epsilon(n)) s_2/2$. Then
for  $s=(s_1,\ldots,s_{n-1})$, 
\begin{equation}\label{stade2}
\Gamma(s_n) M_{n,a}(s) = \prod_i \Gamma(s_1+a_i) \prod_{i<j} \Gamma(s_2+a_i+a_j).
\end{equation}
\end{thm}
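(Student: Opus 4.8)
The plan is to deduce Theorem \ref{st3}, under the restrictions $\Re(s_1+a_i)>0$ for all $i$ and $\Re(s_2+a_i+a_j)>0$ for $i\ne j$, from Corollary \ref{nwid}, which we have already proved by the geometric RSK method; together with Stade's unrestricted argument in \cite[Theorem 3.3]{stade-jams} this is another instance of the pattern already seen in the equivalences Theorem \ref{st1} $\Leftrightarrow$ Theorem \ref{thm-stade-psi} and Theorem \ref{st2} $\Leftrightarrow$ Theorem \ref{thm-stade2-psi}. The bridge between the two identities is the relation (\ref{rel}), together with the scaling identities (\ref{a}) and (\ref{shift}).

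First I would fix the dictionary of parameters. Given $s_1,s_2\in\C$ and $a\in\C^n$ with $\sum_i a_i=0$, set $\gamma=s_1-s_2/2$ and $\lambda_i=a_i+s_2/2$, so that $\sum_i\lambda_i=ns_2/2$, $\lambda_i+\gamma=s_1+a_i$ and $\lambda_i+\lambda_j=s_2+a_i+a_j$. With $\epsilon(j)=1$ for odd $j$ and $0$ for even $j$, the constraint in Theorem \ref{st3} becomes $s_j=\epsilon(j)\gamma+j\bigl(\sum_i\lambda_i\bigr)/n$ for $1\le j\le n-1$, and likewise $s_n=\epsilon(n)\gamma+\sum_i\lambda_i$; one also checks directly, according to the parity of $n$, that $c_n(s,\gamma)\,s^{-\sum_i\lambda_i}=s^{-s_n}$, so the $s$-dependent prefactor coming from Corollary \ref{nwid} will be exactly $s^{-s_n}$. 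The restrictions $\Re(\lambda_i+\gamma)>0$ and $\Re(\lambda_i+\lambda_j)>0$ required by Corollary \ref{nwid} translate into the stated conditions on $s_1,s_2,a$, and in particular force $\Re s_n>0$.

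Next I would unfold the left-hand side of Corollary \ref{nwid}, namely $\int_{(\R_{>0})^n}f(x')^\gamma e^{-sx_1}\Psi^n_{-\lambda}(x)\prod_i dx_i/x_i$ (one may take $s=1$; the computation works for any $s>0$). Change variables from $(x_1,\dots,x_n)$ to $(x_1,y_1,\dots,y_{n-1})$ via $\pi y_j=\sqrt{x_{n-j+1}/x_{n-j}}$; in logarithmic coordinates this map is triangular, so $\prod_i dx_i/x_i=2^{n-1}(dx_1/x_1)\prod_j dy_j/y_j$. Using $x_k=x_1\prod_{j=n-k+1}^{n-1}(\pi y_j)^2$ one obtains $\prod_i x_i=x_1^{n}\prod_{j=1}^{n-1}(\pi y_j)^{2j}$ and, since $f(x')=f(x)^{(-1)^n}$, the key identity $f(x')=x_1^{\epsilon(n)}\prod_{j\text{ odd}}(\pi y_j)^2$. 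Substituting (\ref{rel}) in the form $\Psi^n_{-\lambda}(x)=\bigl(\prod_i x_i\bigr)^{(\sum_i\lambda_i)/n}\bigl(\prod_j y_j^{-j(n-j)/2}\bigr)W_{n,a}(y)$, the integral factorizes: the $x_1$-integral is $\int_0^\infty x_1^{s_n}e^{-sx_1}\,dx_1/x_1=\Gamma(s_n)s^{-s_n}$, while the remaining $y$-integral equals $2^{-(n-1)}M_{n,a}(s)$ once one checks that the accumulated power of each $\pi y_j$ in the integrand is $2\bigl(\epsilon(j)\gamma+j(\sum_i\lambda_i)/n\bigr)=2s_j$. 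Hence the left side of Corollary \ref{nwid} equals $\Gamma(s_n)s^{-s_n}M_{n,a}(s)$, whereas its right side is $s^{-s_n}\prod_i\Gamma(s_1+a_i)\prod_{i<j}\Gamma(s_2+a_i+a_j)$; cancelling $s^{-s_n}$ gives exactly (\ref{stade2}). Running the same chain of substitutions backwards, after first using (\ref{shift}) and (\ref{a}) to reduce a general instance of Theorem \ref{st3} to the normalised case $\sum_i\lambda_i=0$, $s=1$, recovers Corollary \ref{nwid}, so the two statements are genuinely equivalent.

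The only real work is the bookkeeping in this change of variables. The one genuinely delicate point is the collapse of $f(x')$ to $x_1^{\epsilon(n)}\prod_{j\text{ odd}}(\pi y_j)^2$: it is exactly this which produces the parity-dependent shift $\epsilon(j)\gamma$ in the exponents $s_j$ and supplies the argument $s_n$ of the extra Gamma factor. Everything else is of the same type as the manipulations already carried out for Theorems \ref{st1} and \ref{st2}; conceptually, absorbing $\Gamma(s_n)=\int_0^\infty x_1^{s_n}e^{-x_1}\,dx_1/x_1$ back into the integral is precisely what trades the $(n-1)$-dimensional Mellin transform $M_{n,a}$ for the $n$-dimensional symmetric Whittaker integral appearing in Corollary \ref{nwid}.
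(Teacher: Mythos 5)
Your proposal is correct and follows exactly the route the paper intends: Theorem \ref{st3} is Stade's theorem, cited for the unrestricted case, and the paper's new contribution is the restricted case obtained from Corollary \ref{nwid} via the change of variables $\pi y_j=\sqrt{x_{n-j+1}/x_{n-j}}$ and relation (\ref{rel}) --- precisely the computation you carry out, which the paper leaves as ``straightforward to verify using (\ref{rel}) and (\ref{a}) as above.'' Your parameter dictionary $\gamma=s_1-s_2/2$, $\lambda_i=a_i+s_2/2$, the identity $f(x')=x_1^{\epsilon(n)}\prod_{j\ \mathrm{odd}}(\pi y_j)^2$, and the absorption of $\Gamma(s_n)$ into the $x_1$-integral all check out.
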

In terms of the $\Psi^n_\lambda$, this is equivalent to the following identity, which is straightforward
to verify using (\ref{rel}) and (\ref{a}) as above.
\begin{thm}[Stade]\label{thm-stade3-psi} 
Suppose $r>0$, $\lambda\in\C^n$ and $\gamma\in\C$. Then
\begin{align*}
\int_{(\R_{>0})^n} & f(x')^\gamma e^{-r x_1} \Psi^n_{-\lambda}(x) \prod_{i=1}^n \frac{dx_i}{x_i} \\
& = c_n(r,\gamma) r^{-\sum_{i=1}^n\lambda_i} 
\prod_i \Gamma(\lambda_i+\gamma) \prod_{i<j}\Gamma(\lambda_i+\lambda_j) ,
\end{align*}
where $x'_i=1/x_{n-i+1}$, $f(x)=\prod_i x_i^{(-1)^i}$ and 
$$c_n(s,\gamma)=\begin{cases} 1 & \mbox{ if $n$ is even,}\\
s^{-\gamma} & \mbox{ if $n$ is odd.}
\end{cases}$$
\end{thm}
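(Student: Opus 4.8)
The plan is to deduce Theorem~\ref{thm-stade3-psi} from Corollary~\ref{nwid}, which is the statement already established in this paper by combinatorial means (Theorems~\ref{bi} and~\ref{jsym}), together with the elementary scaling and inversion identities~\eqref{a} and~\eqref{inv} for the Whittaker functions. Since Corollary~\ref{nwid} is precisely the assertion of Theorem~\ref{thm-stade3-psi} but with the extra hypotheses $\Re(\lambda_i+\gamma)>0$ for each $i$ and $\Re(\lambda_i+\lambda_j)>0$ for $i\neq j$, and with $s=r$, what remains is only to match notation and, if desired, note that Stade's proof in~\cite{stade-jams} removes the positivity restrictions via meromorphic continuation. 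In other words, the ``proof'' at this point is essentially an observation: I would first recall that Corollary~\ref{nwid} reads
\begin{align*}
\int_{(\R_{>0})^n} f(x')^\gamma e^{-sx_1} \Psi^n_{-\lambda}(x) \prod_{i=1}^n \frac{dx_i}{x_i}
= c_n(s,\gamma) s^{-\sum_{i=1}^n\lambda_i} \prod_i \Gamma(\lambda_i+\gamma) \prod_{i<j}\Gamma(\lambda_i+\lambda_j),
\end{align*}
with $x'_i = 1/x_{n-i+1}$, which is exactly the displayed formula in Theorem~\ref{thm-stade3-psi} after renaming $s$ to $r$.

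Next I would explain, as the text preceding the statement already signals (``straightforward to verify using (\ref{rel}) and (\ref{a})''), how the same identity can alternatively be read off from Stade's Theorem~\ref{st3} in the $W_{n,a}$-normalisation. Here the steps are: start from~\eqref{stade2}, use the relation~\eqref{rel} to convert $W_{n,a}(y)$ into $\Psi^n_{-\lambda}(x)$ where $\lambda$ and $a$ are related by $a_k = \lambda_k - (1/n)\sum_i\lambda_i$ and $\pi y_j = \sqrt{x_{n-j+1}/x_{n-j}}$, track how the product $\prod_j (\pi y_j)^{2s_j}(2 y_j^{-j(n-j)/2})$ and the Jacobian of the change of variables $(y_1,\dots,y_{n-1})\mapsto(x_1,\dots,x_{n-1})$ (with $x_n=r$ or $x_n$ free) combine with the prefactor $\prod_j y_j^{-j(n-j)/2}$ from~\eqref{rel} to produce exactly $f(x')^\gamma$, and absorb the $\Gamma(s_n)$ factor using $\Gamma(s_n)=\int_0^\infty x_1^{s_n} e^{-x_1}\,dx_1/x_1$ exactly as in the treatment of Theorem~\ref{thm-stade-psi} earlier in Section~\ref{wid}. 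The identification $\gamma \leftrightarrow s_2 - (\text{something})$ and the constraint on the $s_j$ in Theorem~\ref{st3} is precisely what encodes the single scalar parameter $\gamma$ multiplying $f(x')$; matching the two sides pins down $s_1 = \lambda$-shift, $s_2$ in terms of $\gamma$, and $s_n$ in terms of $\sum\lambda_i$ and $\gamma$. Finally I would use~\eqref{a} to remove any normalisation constant and pass from $r=1$ to general $r>0$, exactly as done for~\eqref{stade-psi}.

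The only genuine bookkeeping obstacle is the change-of-variables computation linking $(y_j)$ and $(x_i)$: one must carefully verify that the exponents of the $x_i$ (equivalently $y_j$) produced by $\prod_j(\pi y_j)^{2s_j}$, by the prefactor $y_j^{-j(n-j)/2}$ in~\eqref{rel}, and by the Jacobian $\prod dy_j/y_j \leftrightarrow \prod dx_i/x_i$ all combine to give precisely $f(x')^\gamma = \prod_i x_i^{(-1)^{i}\gamma}$ evaluated at $x'_i = 1/x_{n-i+1}$, and that the parity-dependent constant $c_n(r,\gamma)$ emerges correctly (the factor $2^{n-1}$ in the intermediate identity for Theorem~\ref{thm-stade-psi} is replaced here by powers of $2$ that, together with the $s$-dependent prefactors, yield $c_n$). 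Since the constraint on $s_j$ in Theorem~\ref{st3} is exactly two-dimensional, matching it against the two free parameters $(\lambda\text{-shift}, \gamma)$ is consistent; I expect this matching of exponents to be the part requiring the most care, but it is purely mechanical once the substitution $\pi y_j = \sqrt{x_{n-j+1}/x_{n-j}}$ is in place. Given that Corollary~\ref{nwid} already provides the identity directly (under the stated positivity assumptions) with no such computation needed, the cleanest presentation is simply to invoke Corollary~\ref{nwid} and remark that Stade's Theorem~\ref{st3} gives the unrestricted version, which is what the statement of Theorem~\ref{thm-stade3-psi} records.
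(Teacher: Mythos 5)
Your proposal is correct and follows essentially the same route as the paper: the paper's own justification of Theorem~\ref{thm-stade3-psi} is precisely the equivalence with Theorem~\ref{st3} via the normalisation relation~(\ref{rel}) and the scaling identity~(\ref{a}), carried out ``as above'' in the manner of the Theorem~\ref{st1}~$\leftrightarrow$~Theorem~\ref{thm-stade-psi} computation, with the combinatorial Corollary~\ref{nwid} supplying only the parameter-restricted case. Your explicit acknowledgement that invoking Corollary~\ref{nwid} alone cannot yield the unrestricted statement, and that Stade's Theorem~\ref{st3} must be brought in for full generality, is exactly the point the paper intends.
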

Note that $f(x')=f(x)$ if $n$ is even and $f(x')=1/f(x)$ if $n$ is odd.

\section{Tropicalization, last passage percolation and random matrices}
\label{sec:trop} 

The geometric RSK correspondence is a geometric lifting of the (Berenstein-Kirillov extension
of the) RSK correspondence.  Going the other way, let $x^\epsilon_{ij}=e^{y_{ij}/\epsilon}$ where
$Y=(y_{ij})\in\R^{n\times m}$ and $\epsilon>0$.  Let $X^\epsilon=(x^\epsilon_{ij})$ and 
$T^\epsilon=(t^\epsilon_{ij})=T(X^\epsilon)$.  Then the mapping $U:\R^{n\times m}\to\R^{n\times m}$ 
defined by $U(Y)=(u_{ij})$ where $u_{ij}=\lim_{\epsilon\to 0} \epsilon \log t^\epsilon_{ij}$
is the extension of the RSK mapping to matrices with real entries introduced by 
Berenstein and Kirillov~\cite{bki}.  We identify the output $U(Y)$ with a pair of patterns as before,
but now the entries are allowed to take real values.  In this context, we define a {\em real pattern} 
of height $h$ and shape $x\in\R^n$ as an array of real numbers
$$R=\begin{array}{cccccccccc}
&&&r_{11}&&&&&\\
&&r_{22}&&r_{21}&&&&\\
&\iddots&&&&\ddots&&&\\
r_{nn}&&&\ldots&&&r_{n1}&&\\
&\ddots&&&&&&\ddots&\\
&&r_{hn}&&&\ldots&&&r_{h1}
\end{array}$$
with bottom row $r_{h\cdot} = x$.  The range of indices is 
$$L(n,h)=\{(i,j):\ 1\le i\le h,\ 1\le j\le i\wedge n\}.$$
Fix a real pattern $R$ as above.  Set $s_0=1$ and, for $1\le i\le h$,
$s_i=\sum_{j=1}^{i\wedge n} r_{ij}$ and $c_i=s_i-s_{i-1}$.
We shall refer to $c$ as the {\em type} of $R$ and write $c=\mbox{\small{type}}(R)$.
Denote by $\Sigma^h(x)$ the set of real patterns with shape $x$ and height $h$.
We say that a real pattern $R$ is a (generalized) Gelfand-Tsetlin pattern
if $r_{nn}\ge 0$ and it satisfies the interlacing property $r_{i+1,j+1}\le r_{ij}\le r_{i+1,j}$
for all $(i,j)\in L(n,h)$ with $i<h$, with the conventions $r_{i+1,n+1}=0$ for
$i=n,\ldots,h-1$.  Denote the set of generalized Gelfand-Tsetlin patterns
with height $h$ and shape $x\in\R_+^n$ by $GT^h(x)$.  This is a Euclidean polytope of
dimension $d=n(n-1)/2+(h-n+1)n$.  Denote the corresponding Euclidean measure
by $dR$.  The analogue of the Whittaker functions in this setting are the functions
$J_\lambda(x)$ defined, for $\lambda\in\C^h$ and $x\in\R_+^n$ by
$$J_\lambda(x)=\int_{GT^h(x)} e^{-\lambda\cdot\mbox{\tiny{type}}(R)} dR.$$
Note that, from \eqref{gf}, we have
$$\lim_{\epsilon\to 0} \epsilon^{d} \Psi^n_{\epsilon\lambda;1}( e^{x/\epsilon})=J_\lambda(x).$$
If $h=n$ then $J_\lambda(x)=\det(e^{-\lambda_i x_j})/\Delta(\lambda)$ where
$\Delta(\lambda)=\prod_{i>j} (\lambda_i-\lambda_j)$ (see, for example,~\cite{noc1}).

The analogue of Theorem~\ref{bi} in this setting is the following.  This result can be inferred 
directly from results of \cite{bki} (see Property 8 after the statement of Theorem 1.1)
or seen as a consequence of Theorem~\ref{bi}.
We identify the output $U(Y)$ with a pair of real patterns $(R,S)$ of respective
heights $m$ and $n$, and common shape $(u_{nm},\ldots,u_{n-p+1.n-p+1})$,
where $p=n\wedge m$.

\begin{cor}\label{vpt} 
The output $U(Y)=(R,S)$ is a pair of generalized Gelfand-Tsetlin patterns
if, and only if, all of the entries of $Y$ are non-negative.  
\end{cor}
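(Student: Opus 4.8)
The plan is to deduce Corollary~\ref{vpt} by tropicalizing the fundamental identity of Theorem~\ref{bi}. Fix $Y=(y_{ij})\in\R^{n\times m}$, set $x^\epsilon_{ij}=e^{y_{ij}/\epsilon}$, and apply Theorem~\ref{bi} to $W=X^\epsilon=(x^\epsilon_{ij})$ with $s=1$. Because $T$ is a composition of the local moves $l_{ij}$, each of which is subtraction-free, the entries $t^\epsilon_{ij}=T(X^\epsilon)_{ij}$ are ratios of polynomials with positive coefficients in the $e^{y_{kl}/\epsilon}$; hence the limit $u_{ij}=\lim_{\epsilon\to0}\epsilon\log t^\epsilon_{ij}$ exists and is a piecewise-linear function of $Y$, namely the Berenstein--Kirillov output $U(Y)$. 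With $s=1$ the left-hand side of Theorem~\ref{bi} is $\sum_{ij}1/w_{ij}=\sum_{ij}e^{-y_{ij}/\epsilon}$, and the right-hand side $\E_1(T^\epsilon)$ is a finite sum of the quantities $1/t^\epsilon_{11}$, $t^\epsilon_{i-1,j}/t^\epsilon_{ij}$ and $t^\epsilon_{i,j-1}/t^\epsilon_{ij}$. Applying $\epsilon\log(\cdot)$ to both sides and letting $\epsilon\to0$, using $\epsilon\log\sum_k e^{a_k/\epsilon}\to\max_k a_k$ for a finite family of reals, I would obtain the pointwise identity
\begin{equation}\label{trop-bi}
-\min_{i,j}y_{ij}=\max\Bigl(-u_{11},\ \max_{i,j}(u_{i-1,j}-u_{ij}),\ \max_{i,j}(u_{i,j-1}-u_{ij})\Bigr)=:\E_1^{\mathrm{trop}}(U),
\end{equation}
with the conventions $u_{0j}=u_{i0}=-\infty$ inherited from $t_{0j}=t_{i0}=0$. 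The only delicate point is the interchange of the limit with $\epsilon\log$ of a sum, and this is immediate here since $T$ is subtraction-free and every term is positive.

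The second step is the purely combinatorial observation that $\E_1^{\mathrm{trop}}(U)\le0$ holds if and only if $U=(R,S)$ is a pair of generalized Gelfand--Tsetlin patterns. Unwinding the right side of \eqref{trop-bi}, $\E_1^{\mathrm{trop}}(U)\le0$ says exactly that $u_{11}\ge0$ and that $U$ is weakly increasing along rows and down columns. Reading the pattern correspondence \eqref{tz1}--\eqref{tz2} tropically, $r_{k\ell}=u_{n-\ell+1,\,k-\ell+1}$ and $z'_{s\ell}=u_{s-\ell+1,\,m-\ell+1}$, so each interlacing inequality $r_{i+1,j+1}\le r_{ij}$ or $r_{ij}\le r_{i+1,j}$ of $R$ or of $S$ is literally one of the relations $u_{a-1,b}\le u_{ab}$ or $u_{ab}\le u_{a,b+1}$; conversely, a diagonal-by-diagonal count shows every such monotonicity relation of $U$ arises exactly once, after noting that the interlacings with $j=n$ degenerate, via the convention $r_{i+1,n+1}=0$, into positivity conditions rather than monotonicity ones. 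Finally, the positivity $r_{nn}\ge0$ for whichever of $R,S$ is the height-$p$ triangle is precisely $u_{11}\ge0$, and every remaining positivity condition follows by chaining monotonicity relations back to the entry $u_{11}$. Equivalently, one may phrase this as the statement that the tropicalization of $\F_1$ cuts out the Gelfand--Tsetlin polytope, which is what is used implicitly in passing from \eqref{gf} to the limit $\lim_{\epsilon}\epsilon^d\Psi^n_{\epsilon\lambda;1}(e^{x/\epsilon})=J_\lambda(x)$, applied to the two factors of $\E_s=\F_0(P)+\F_s(Q)$.

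Putting the two steps together gives the chain of equivalences: $U(Y)$ is a pair of generalized Gelfand--Tsetlin patterns $\iff\E_1^{\mathrm{trop}}(U(Y))\le0\iff-\min_{ij}y_{ij}\le0\iff y_{ij}\ge0$ for all $i,j$, which is exactly Corollary~\ref{vpt}. I expect the main obstacle to be the bookkeeping in the second step --- carefully matching the two families of interlacing inequalities of $(R,S)$ against the monotonicity inequalities of $U$ and checking that nothing is double-counted or omitted; the tropicalization in the first step, by contrast, is routine. If one wishes to sidestep this bookkeeping altogether, the corollary also follows directly from Property~8 following Theorem~1.1 of \cite{bki}, or by tropicalizing the local-move description of $T$ from Section~\ref{gRSKsec} and running the classical RSK insertion argument by induction on the number of rows --- the route through which the non-negativity constraint makes its appearance in the combinatorial picture.
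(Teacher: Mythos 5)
Your proposal is correct and takes precisely the route the paper itself indicates: the paper gives no written proof of this corollary, remarking only that it "can be inferred directly from results of \cite{bki} \dots or seen as a consequence of Theorem~\ref{bi}", and your tropicalization of Theorem~\ref{bi} (with $s=1$) is a worked-out version of that second suggestion. The one step you leave as bookkeeping --- matching the interlacing inequalities of $(R,S)$ against the row/column monotonicity inequalities of $U$ via \eqref{tz1}--\eqref{tz2} --- is indeed routine and checks out, since each interlacing relation $z_{k+1,\ell+1}\le z_{k\ell}\le z_{k+1,\ell}$ translates literally into $u_{a-1,b}\le u_{ab}\le u_{a,b+1}$ with $(a,b)=(n-\ell+1,k-\ell+1)$, the degenerate relations at the edge give the positivity constraints, and the counts agree.
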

We note that the corresponding statement for matrices with integer entries follows as a particular case.
If $Y$ has non-negative integer entries then the pair of generalized
Gelfand-Tsetlin patterns obtained can be interpreted in the usual way as the pair of 
semistandard tableaux obtained via the RSK correspondence.  

The Berenstein-Kirillov \cite{bki} definition of $U$ in terms of lattice paths is given as follows.
For $1\le k\le m$ and $1\le r\le n\wedge k$,
\be u_{n-r+1,k-r+1}+ \dotsm + u_{n-1,k-1} + u_{nk} 
= \max_{(\pi_1,\ldots,\pi_r)\in\Pi^{(r)}_{n,k}} \sum_{(i,j)\in \pi_1\cup\cdots\cup\pi_r} y_{ij},
\label{path2t}\ee
where $\Pi^{(r)}_{n,k}$ denotes the set of $r$-tuples of non-intersecting
directed nearest-neighbor lattice paths $\pi_1,\ldots,\pi_r$
starting at positions $(1,1),(1,2),\ldots,(1,r)$ and ending at positions $(n,k-r+1),\ldots,(n,k-1),(n,k)$. 
(See Figure \ref{fig5}.) 
This determines the entries of $R$.  The entries of $S$ are given by similar formulae using
$U(Y^t)=(S,R)$. In particular,
\begin{equation}\label{lpt}
u_{nm}=\max_{\pi\in\Pi^{(1)}_{n,m}} \sum_{(i,j)\in\pi} y_{ij},
\end{equation}
where $\Pi^{(1)}_{n,m}$ is the set of directed nearest-neighbor lattice paths in $\Z^2$ from $(1,1)$ to $(n,m)$.
This formula provides a connection to last passage directed percolation which we will discuss shortly.
The formula \eqref{path2t} is the analogue of Greene's theorem in this setting (see, for example, \cite[\S 3.1]{fulton}).

The local move description of Section \ref{gRSKsec} carries over to the tropical setting, as follows.
For convenience and clarity we adopt the same notation as in the geometric setting.
For each $2\le i\le n$ and $2\le j\le m$ define a mapping $l_{ij}$ which takes as input a matrix 
$Y=(y_{ij})\in\R^{n\times m}$ and replaces the submatrix
$$ \begin{pmatrix} y_{i-1,j-1}& y_{i-1,j}\\ y_{i,j-1}& y_{ij}\end{pmatrix}$$
of $Y$ by its image under the map
\begin{equation}\label{abcdt}
\begin{pmatrix} a& b\\ c& d\end{pmatrix} \qquad \mapsto \qquad 
\begin{pmatrix} b\wedge c-a & b\\ c& d+b\vee c \end{pmatrix},
\end{equation}
and leaves the other elements unchanged.  For $2\le i\le n$ and $2\le j\le m$, 
define $l_{i1}$ to be the mapping that replaces
the element $y_{i1}$ by $y_{i-1,1}+y_{i1}$ and $l_{1j}$ to be the mapping that replaces the element $y_{1j}$ by 
$y_{1,j-1}+y_{1j}$.  As before we define $l_{11}$ to be the identity map.
For $1\le i\le n$ and $1\le j\le m$, set
$$\pi^j_i=l_{ij}\circ\cdots\circ l_{i1},$$
and, for $1\le i\le n$,
$$R_i = \begin{cases} \pi_1^{m-i+1}\circ\cdots\circ\pi^m_i & i\le m\\
\pi^1_{i-m+1} \circ\cdots\circ\pi^m_i & i\ge m . \end{cases}$$
Then the Berenstein-Kirillov map is given by
\be U=R_n\circ\cdots\circ R_1. \label{defU}\ee
Now observe that each $l_{ij}$ is invertible.
Indeed, the inverse of the map \eqref{abcdt} is given by
\begin{equation}\label{abcdtinv}
\begin{pmatrix} a& b\\ c& d\end{pmatrix} \qquad \mapsto \qquad 
\begin{pmatrix} b\wedge c-a  & b\\ c& d-b\vee c \end{pmatrix},
\end{equation}
and the boundary moves $l_{1j}$, $l_{i1}$ are clearly invertible.
It follows that the map $U$ is invertible.
Moreover, $U$ preserves the Lebesgue measure on $\R^{n\times m}$.
The Jacobians of the $l_{ij}$ are clearly almost everywhere equal to $\pm 1$.
Combining this with Corollary \ref{vpt} we conclude that the restriction of $U$ to $\R_+^{n\times m}$
is volume preserving with respect to the Euclidean measure, injective and its image is given by the Euclidean 
set of pairs of generalized Gelfand-Tsetlin patterns with respective heights $m$ and $n$, 
having the same shape in $$C^{(p)}=\{x\in\R_+^p:\ x_1\ge \cdots\ge x_p\}.$$  
Finally, we recall the following straightforward fact.
If we define row and column sums $r_i=\sum_j y_{ij}$ and 
$c_j=\sum_i y_{ij}$, then $\mbox{\small{type}}(S)=r$ and $\mbox{\small{type}}(R)=c$.
Note that this implies, for $\lambda\in\C^m$ and $\nu\in\C^n$, 
\begin{equation}\label{pq'}
\sum_{ij} (\nu_i+\lambda_j) y_{ij} = \sum_i \nu_i r_i+ \sum_j \lambda_j c_j.
\end{equation}
The analogue of the Cauchy-Littlewood identity in this setting 
(cf. Corollary~\ref{cor-bs-gen}) is thus given as follows.
\begin{prop}
Suppose $\lambda\in\C^m$ and $\nu\in\C^n$, where $n\ge m$ and
$\Re (\lambda_i+\nu_j)>0$ for all $i$ and $j$.  Then
\begin{equation}\label{bs-gen-t} 
\int_{C^{(m)} } J_\nu(x) J_\lambda(x) \prod_{i=1}^m dx_i = \prod_{ij}(\nu_i+\lambda_j)^{-1}.
\end{equation} 
\end{prop}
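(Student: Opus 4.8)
The plan is to carry out, in the zero-temperature (tropical) setting, the same argument that proves Corollary~\ref{cor-bs-gen}, using the local-move description \eqref{defU} of the Berenstein--Kirillov map $U$, its volume-preserving property, and the description of its image on $\R_+^{n\times m}$ provided by Corollary~\ref{vpt}.

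First I would start from the elementary Laplace integral
\[
\int_{\R_+^{n\times m}} e^{-\sum_{ij}(\nu_i+\lambda_j)y_{ij}}\,\prod_{ij}dy_{ij}=\prod_{ij}(\nu_i+\lambda_j)^{-1},
\]
which converges absolutely since $\Re(\nu_i+\lambda_j)>0$ for all $i,j$; the same bound with $\nu,\lambda$ replaced by their real parts will license the use of Fubini's theorem below. Next I would change variables $Y\mapsto U(Y)=(R,S)$. Each local move $l_{ij}$ is invertible with Jacobian $\pm1$ almost everywhere (the inverse of \eqref{abcdt} being \eqref{abcdtinv}, and the boundary moves being linear and unimodular), so $U$ is a.e.\ a volume-preserving bijection of $\R^{n\times m}$, and by Corollary~\ref{vpt} its restriction to $\R_+^{n\times m}$ is a bijection onto the set of pairs $(R,S)$ of generalized Gelfand--Tsetlin patterns of heights $m$ and $n$ respectively, with common shape $x\in C^{(m)}$ (here $p=n\wedge m=m$). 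Moreover $(R,S)$ is merely a bijective relabelling of the output matrix $(u_{ij})$: the $m$ shape coordinates $x=(u_{nm},\ldots,u_{n-m+1,n-m+1})$ are common to both patterns and the remaining $u_{ij}$ fill in the interior entries of $R$ and of $S$. Hence $\prod_{ij}du_{ij}=\bigl(\prod_{i=1}^{m}dx_i\bigr)\cdot dR\cdot dS$, where $dR$ is the Euclidean measure on $GT^m(x)$ and $dS$ the Euclidean measure on $GT^n(x)$.

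Using the row/column-sum identity \eqref{pq'}, namely $\sum_{ij}(\nu_i+\lambda_j)y_{ij}=\nu\cdot\mathrm{type}(S)+\lambda\cdot\mathrm{type}(R)$, together with the fact that $R$ and $S$ share the shape $x$, the change of variables turns the Laplace integral into
\[
\prod_{ij}(\nu_i+\lambda_j)^{-1}=\int_{C^{(m)}}\left(\int_{GT^n(x)}e^{-\nu\cdot\mathrm{type}(S)}\,dS\right)\left(\int_{GT^m(x)}e^{-\lambda\cdot\mathrm{type}(R)}\,dR\right)\prod_{i=1}^{m}dx_i ,
\]
and the two inner integrals are by definition $J_\nu(x)$ and $J_\lambda(x)$, which is exactly \eqref{bs-gen-t}.

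The step requiring the most care is the justification of this change of variables: one must check that the pushforward of Lebesgue measure under $U$, disintegrated along the shape coordinates $x_1,\dots,x_m$, is genuinely the product $dR\,dS\,\prod_i dx_i$ — equivalently, that $Y\mapsto(x,\mathrm{int}\,R,\mathrm{int}\,R')$ is an affine unimodular relabelling of $Y\mapsto(u_{ij})$ — and that the measure-zero loci on which some $l_{ij}$ fails to be smooth are harmless. An alternative I would mention is to deduce \eqref{bs-gen-t} as a scaling limit of Corollary~\ref{cor-bs-gen}: replace $(\nu,\lambda)$ by $(\epsilon\nu,\epsilon\lambda)$ with $s=1$, substitute $x_i=e^{u_i/\epsilon}$, multiply by the appropriate power of $\epsilon$, and let $\epsilon\downarrow0$, using $\epsilon^{d}\Psi^m_{\epsilon\lambda;1}(e^{u/\epsilon})\to J_\lambda(u)$ and $\Gamma(\epsilon t)\sim(\epsilon t)^{-1}$. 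There the obstacle is justifying the interchange of limit and integral (the mass of the integrand concentrating on the Gelfand--Tsetlin cone), so I would favour the direct combinatorial route above.
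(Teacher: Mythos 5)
Your proof is correct and takes essentially the same route the paper intends: the paper states this proposition without a displayed proof, leaving it as an immediate consequence of the preceding discussion (the volume-preserving local-move description \eqref{defU}, Corollary~\ref{vpt} identifying the image on $\R_+^{n\times m}$ as pairs of Gelfand--Tsetlin patterns, and the row/column-sum identity \eqref{pq'}), which is exactly the argument you spell out. Your extra care about the disintegration of Lebesgue measure along the shape coordinates and the measure-zero set where the piecewise-linear moves fail to be smooth is a reasonable thing to flag, and the alternative scaling-limit route you sketch is consistent with the paper's stated relation $\lim_{\epsilon\to 0}\epsilon^{d}\Psi^n_{\epsilon\lambda;1}(e^{x/\epsilon})=J_\lambda(x)$ but, as you note, would require a separate dominated-convergence justification.
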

This basic structure has been exploited in the papers~\cite{KJ,br,fr,bp,dw} to study last passage
percolation models with exponential weights, as we shall now explain.  We note that the development
in those papers is via a discrete approximation and as such differs from the present framework, but
the main ideas are the same.  Let $a\in\R^n$ and $b\in\R^m$ such that $a_i+b_j>0$ for all $i$ and $j$.
Consider the measure on input matrices $(y_{ij})\in\R_+^{n\times m}$ defined by
$$\nu_{a,b}(dy)=\prod_{i,j} e^{-(a_i+b_j)y_{ij}} dy_{ij}.$$
From the above, it follows that the push-forward of $\nu_{a,b}$ under the map $U$ is given by
$$\nu_{a,b}\circ U^{-1} (du)=e^{-a\cdot \mbox{\tiny{type}}(S)-b\cdot \mbox{\tiny{type}}(R)} \prod_{i,j} du_{ij}.$$
Now, the variable $u_{nm}$ defined by \eqref{lpt} has the interpretation as a {\em last passage time}
in the percolation model on the lattice with weights given by the $y_{ij}$.  Choosing these weights
at random so that they are independent and exponentially distributed with respective parameters
$a_i+b_j$ corresponds to choosing the input matrix $(y_{ij})$ according to the probability measure
$$\tilde\nu_{a,b}(dy)=\prod_{ij}(a_i+b_j)\nu_{a,b}(dy).$$
From the above, under this probability measure, the law of the random variable $u_{nm}$ is the
same, assuming $n\ge m$, as the first marginal of the probability measure on $C^{(m)}$ defined by
$$\mu_{a,b}(dx) = \prod_{ij}(a_i+b_j) J_a(x) J_b(x) \prod_{i=1}^m dx_i .$$
In other words, for bounded continuous $f$,
$$\int_{\R_+^{n\times m}} f(u_{mn}) \tilde\nu_{a,b}(dy) = \int_{C^{(m)}} f(x_1) \mu_{a,b}(dx) .$$
The probability measures $\mu_{a,b}$ are non-central Laguerre (or complex Wishart) 
ensembles and the integrals \eqref{bs-gen-t} are the corresponding Selberg-type integrals~\cite{KJ,br,fr,bp,dw}.

Similarly, in the symmetric case, one arrives at the interpolating ensembles of Baik and Rains~\cite{br,b}.
These are probability measures on $\R_+^n$ defined for $\alpha\in\R_+^n$ and $\zeta\in\R_+$ by
$$\mu_{\alpha;\zeta}(dx)=\prod_{i<j}(\alpha_i+\alpha_j) J_\alpha(x) \prod_{i=1}^n  (\alpha_i+\zeta) e^{(-1)^i\zeta x_i} dx_i .$$
We note that, in the notation of Section \ref{sym-sec}, as $\epsilon\to 0$, 
$$\nu_{\epsilon\alpha;\epsilon\dipa}\circ\sigma^{-1} (d e^{x/\epsilon}) \Longrightarrow \mu_{\alpha;\zeta}(dx),$$
where $``\Longrightarrow"$ denotes weak convergence of probability measures.
In this setting (see ~\cite{br}) if the input matrix $(y_{ij})\in\R_+^{n\times n}$ is symmetric and chosen according to the
probability measure
$$\prod_{i< j}(\alpha_i+\alpha_j)e^{-( \alpha_i+\alpha_j) y_{ij}} dy_{ij}
\prod_i (\alpha_i+\zeta) e^{-(\alpha_i+\zeta)y_{ii}} dy_{ii}$$
then the last passage time $u_{nn}$ is distributed as the first marginal of $\mu_{\alpha;\zeta}$.


\begin{thebibliography}{10}

\bibitem{AD}
D.~Aldous, P.~Diaconis.
\newblock Longest increasing subsequences: From patience sorting to the Baik-Deift-Johansson theorem.
\newblock {\it Bull. Amer. Math. Soc.}, {\bf 36}: 413--432 (1999).

\bibitem{b} J. Baik. Painlev\'e expressions for LOE, LSE, and interpolating ensembles.
{\em Int. Math. Res. Not.} 33 (2002) 1739--1789.

\bibitem{BDJ}
J.~ Baik, P.A.~Deift, K.~Johansson.
\newblock On the distribution of the length of the longest increasing subsequence of random permutations.
\newblock {\it J. Amer. Math. Soc.}, {\bf 12}:1119--1178 (1999).

\bibitem{br} J. Baik and E. Rains.
Algebraic aspects of increasing subsequences.
{\em Duke Math. J. } 109 (2001) 1--65.

\bibitem{br1} J. Baik and E. M. Rains. Symmetrized random permutations. 
MSRI Publications, 40 (2001) 1--19.

\bibitem{bk} A. Berenstein and D. Kazhdan. Geometric and unipotent crystals. 
{\em Geom. Funct. Anal.}, Special Volume, Part I (2000) 188--236.

\bibitem{bk2} A. Berenstein and D. Kazhdan. 
Lecture notes on geometric crystals and their combinatorial analogues.  
{\em Combinatorial aspect of integrable systems,} MSJ Memoirs, 17, Mathematical Society of Japan, Tokyo, 2007.

\bibitem{bki} A. Berenstein and A.N. Kirillov.
The Robinson-Schensted-Knuth bijection, quantum matrices and piece-wise linear combinatorics.
{\em Proceedings of 13th International Conference on Formal Power Series and Algebraic Combinatorics, }
Arizona State University, May 20-26, 2001.

\bibitem{bc} A. Borodin and I. Corwin.  Macdonald processes. arXiv:1111.4408.

\bibitem{bcr}  A. Borodin, I. Corwin and D. Remenik.
Log-Gamma polymer free energy fluctuations via a Fredholm determinant identity. arXiv:1206.4573.

\bibitem{bp} A. Borodin and S. P\'ech\'e.
Airy kernel with two sets of parameters in directed percolation and random matrix theory.
{\em J. Stat. Phys.} 132 (2008) 275--290.

\bibitem{bump1} D. Bump.
{\em Automorphic forms on GL(3,${\R})$.}
Lecture Notes in Mathematics, 1083. Springer-Verlag, Berlin, 1984.

\bibitem{bump} D. Bump. The Rankin Selberg method: a survey, in Number Theory, Trace
Formulas, and Discrete Groups (K. E. Aubert, E. Bombieri and D. Goldfeld,
eds.). Academic Press, New York, 1989.

\bibitem{bf} D. Bump and S. Friedberg. The exterior square automorphic L-functions on 
$GL(n)$. Festschrift in Honor of Piatetski-Shapiro, Part II, Weizmann, Jerusalem, 1990, pp. 47Ð65.

\bibitem{cosz} I. Corwin, N. O'Connell, T. Sepp\"al\"ainen and N. Zygouras. 
Tropical combinatorics and Whittaker functions. 
%{\em Duke Math. J.}, to appear.
arXiv:1110.3489.

\bibitem{dw} E. Dieker and J. Warren.
On the largest-eigenvalue process for generalized Wishart random matrices. 
{\em ALEA} 6 (2009) 369--376. 

\bibitem{fr} P. Forrester and E. Rains.
Interpretations of some parameter depen- dent generalizations of classical matrix ensembles. 
{\em Probab. Th. Rel. Fields} 131 (2005) 1--61.

\bibitem{fulton}
W.~Fulton.
\newblock {\em Young Tableaux}, volume~35 of {\em London Mathematical Society
  Student Texts}.
\newblock Cambridge University Press, 1997.

\bibitem{gklo} A. Gerasimov, S. Kharchev, D. Lebedev and S. Oblezin.
On a Gauss-Givental representation of quantum Toda chain wave equation.
{\em Int. Math. Res. Not.} (2006) 1--23.

\bibitem{glo} A. Gerasimov, D. Lebedev and S. Oblezin.
Baxter Operator and Archimedean Hecke Algebra.
{\em Commun. Math. Phys.} 284 (2008) 867--896.

\bibitem{g} A. Givental. Stationary phase integrals, quantum Toda lattices, 
flag manifolds and the mirror conjecture. {\em Topics in Singularity Theory},
AMS Transl. Ser. 2, vol. 180, AMS, Rhode Island (1997) 103--115.

\bibitem{dg} D. Goldfeld. {\em Automorphic Forms and L-Functions for the Group $GL(n,\R)$.}
Cambridge University Press, 2006.

\bibitem{gld} T. Gueudre, P. Le Doussal. {\em Directed polymer near a hard wall and KPZ equation in the half-space.}
arxiv:1208.5669v2

\bibitem{is} T. Ishii and E. Stade. New formulas for Whittaker functions on $GL(n,\R)$.
{\em J. Fun. Anal.} 244 (2007) 289--314.

\bibitem{ja} H. Jacquet. Fonctions de Whittaker associ\'ees aux groupes de Chevalley.
{\em Bulletin de la Soci\'et\'e Math\'ematique de France} 95 (1967) 243--309.

\bibitem{jk} D. Joe and B. Kim.
Equivariant mirrors and the Virasoro conjecture for flag manifolds.
{\em Int. Math. Res. Notices} 15 (2003) 859--882.

\bibitem{KJ}
K.~Johansson.
\newblock Shape fluctuations and random matrices.
\newblock {\it Comm. Math. Phys.}, {\bf 209}:437--476 (2000).

\bibitem{kl} S. Kharchev and D. Lebedev. Integral representations for the eigenfunctions
of quantum open and periodic Toda chains from the QISM formalism. 
{\em J. Phys. A} 34 (2001) 2247--2258.

\bibitem{ki} A. N. Kirillov.  Introduction to tropical combinatorics. {\em Physics and Combinatorics. Proc.
Nagoya 2000 2nd Internat.Workshop} (A. N. Kirillov and N. Liskova, eds.), World Scientific,
Singapore, 2001, pp. 82--150.

\bibitem{bki2} A.N. Kirillov and A. Berenstein. 
Groups generated by involutions, Gelfand-Tsetlin patterns, and combinatorics of Young tableaux.
{\em Algebra i Analiz} 7 (1995) 92--152.

\bibitem{k} B. Kostant.  Quantisation and representation theory.  In: {\em Representation Theory
of Lie Groups}, Proc. SRC/LMS Research Symposium, Oxford 1977, LMS Lecture Notes 34,
Cambridge University Press, 1977, pp. 287--316.

\bibitem{ny} M. Noumi and Y. Yamada. 
Tropical Robinson-Schensted-Knuth correspondence and birational Weyl group actions.
{\em Adv. Stud. Pure Math.} 40 (2004) 371�442.

\bibitem{noc}
N. O'Connell.  Directed polymers and the quantum Toda lattice.
{\em Ann. Probab.} 40 (2012) 437--458.

\bibitem{noc1}
N.~O'Connell. Whittaker functions and related stochastic processes.
To appear in proceedings of Fall 2010 MSRI semester 
{\em Random matrices, interacting particle systems and integrable systems}.
arXiv:1201.4849

\bibitem{OkInfWedge}
A.~Okounkov.
\newblock Infinite wedge and random partitions
\newblock {\it Selecta Math.},{\bf 7}:57--81 (2001).

\bibitem{sts}  M. Semenov-Tian-Shansky. Quantisation of open Toda lattices.
In: {\em Dynamical systems VII:
Integrable systems, nonholonomic dynamical systems. }
Edited by V. I. Arnol'd and S. P. Novikov. 
Encyclopaedia of Mathematical Sciences, 16. Springer-Verlag, 1994.

\bibitem{s} T. Sepp\"al\"ainen. 
Scaling for a one-dimensional directed polymer with boundary conditions. 
{\em Ann. Probab.} 40 (2012) 19--73.

\bibitem{stade} E. Stade. Archimedean $L$-factors on $GL(n)\times GL(n)$ and 
generalized Barnes integrals. 
{\em Israel J. Math.} 127 (2002) 201--219.

\bibitem{stade-jams} E. Stade. Mellin transforms of $GL(n,\R)$ Whittaker functions.
{\em Amer. J. Math.} 123 (2001) 121--161.

\bibitem{stanley} R. Stanley.
\newblock {\em Enumerative Combinatorics}, volume~2.
\newblock Cambridge University Press, 2001.

\bibitem{w}  N. Wallach. {\em Real reductive groups II.} Academic Press. San Diego CA, 1992.

\end{thebibliography}
\end{document}